\let\oldfrac\frac
\renewcommand{\frac}[2]{%
  \mathchoice
    {\oldfrac{#1}{#2}}
    {#1/#2}
    {\oldfrac{#1}{#2}}
    {\oldfrac{#1}{#2}}
}
\theoremstyle{definition}
\newtheorem{definition}{Definition}[section]
\theoremstyle{plain}
\newtheorem{theorem}{Theorem}[section]
\newtheorem{corollary}{Corollary}[section]
\newtheorem{proposition}{Proposition}[section]
\newtheorem{lemma}{Lemma}[section]
\newtheorem{remark}{Remark}[section]
\newcommand{\C}{\mathbb{C}}
\newcommand{\E}{\mathbb{E}}
\newcommand{\F}{\mathcal{F}}
\newcommand{\N}{\mathbb{N}}
\renewcommand{\P}{\mathbb{P}}
\newcommand{\R}{\mathbb{R}}
\newcommand{\p}[1]{\left(#1\right)}
\renewcommand{\b}[1]{\left[#1\right]}
\newcommand{\set}[1]{\left\{#1\right\}}
\NewDocumentCommand{\limit}{m O{\infty}}{\lim_{#1 \to #2}}
\newcommand{\besselI}[2]{I_{#1}\p{#2}}
\newcommand{\besselJ}[2]{J_{#1}\p{#2}}
\newcommand{\inner}[2]{\left\langle#1,#2\right\rangle}
\let\oldliminf\liminf
\renewcommand{\liminf}[1]{\oldliminf_{#1 \to \infty}}
\let\oldlimsup\limsup
\renewcommand{\limsup}[1]{\oldlimsup_{#1 \to \infty}}
\newcommand{\ind}[1]{\mathbbm{1}_{#1}}
\NewDocumentCommand{\deriv}{O{} m O{}}{\oldfrac{d^{#3}#1}{d#2^{#3}}}
\numberwithin{equation}{section}
\numberwithin{figure}{section}
\title{Stochastic Kimura Equations}
\date{}
\begin{document}

\begin{abstract}

In this work we study the one-dimensional stochastic Kimura equation $\partial_{t}u\left(z,t\right)=z\partial_{z}^{2}u\left(z,t\right)+u\left(z,t\right)\dot{W}\left(z,t\right)$ for $z,t>0$ equipped with a Dirichlet boundary condition at $0$, with $\dot{W}$ being a Gaussian space-time noise. 
This equation can be seen as a degenerate analog of the parabolic Anderson model. 
We combine the Wiener chaos theory from Malliavin calculus, the Duhamel perturbation technique from PDEs, and the kernel analysis of (deterministic) degenerate diffusion equations to develop a solution theory for the stochastic Kimura equation. We establish results on existence, uniqueness, moments, and continuity for the solution $u\left(z,t\right)$. 
In particular, we investigate how the stochastic potential and the degeneracy in the diffusion operator jointly affect the properties of $u\left(z,t\right)$ near the boundary. 
We also derive explicit estimates on the comparison under the $L^{2}-$ norm between $u\left(z,t\right)$ and its deterministic counterpart for $\left(z,t\right)$ within a proper range. 

\end{abstract}

\keywords{Kimura equation, parabolic Anderson model, stochastic Kimura equation,
stochastic degenerate diffusion equation}
\thanks{Part of this work is supported by the NSERC Discovery Grant of the
second author (No. 241023).}
\subjclass[2000]{60H15, 60H30, 35K65}
\author{Roland Riachi$^\dagger$}
\thanks{$\dagger$ roland.riachi@mail.mcgill.ca}
\author{Linan Chen$^{\ddagger}$}
\thanks{$\ddagger$linan.chen@mcgill.ca}
\maketitle
\allowdisplaybreaks
\tableofcontents
\newpage

\section{Introduction}  \label{sec:intro}
In this work we study a class of one-dimensional stochastic degenerate diffusion equations, which we call \emph{stochastic Kimura equations}, that are diffusion equations on $\R_+$ with the diffusion coefficient degenerating linearly at the boundary $0$ and subject to a Gaussian noise potential on $\R_+$.
This work is a marriage between the study of degenerate diffusion, which is a classical topic with a long-celebrated history, and the techniques from stochastic partial differential equations (SPDEs), which is a fast developing field with many recent breakthroughs. 
By combining analytic methods and stochastic analysis, we derive both qualitative (e.g., existence and uniqueness) and quantitative (e.g., moments and regularity) results on the solution to the stochastic Kimura equation.
We put a particular emphasis on the short-term behavior of the solution near the boundary, dissecting the joint effect of the degenerate diffusion operator and the space-time noise potential. 
To the best of our knowledge, these aspects of stochastic degenerate diffusion equations have not been previously studied.

\subsection{Kimura Diffusion and Parabolic Anderson Model}
\label{ssec:KimuraPAM}

In this subsection we give a coarse review of the two fields, degenerate diffusion equation and SPDEs, at the intersection of which our work lies. 
Both fields have rich and vast literature. 
Our reviews will be limited as we will only mention the results that are most relevant to the problem concerned in this work. 
In particular, the two pillars based on which our work is carried out are the \emph{Kimura diffusion}, a classical example of degenerate diffusion, and the \emph{parabolic Anderson model}, a well studied SPDE model. 
We now give a brief introduction to both models, but leave the technical details for the next section. 

\subsubsection*{1.1.1. Kimura Diffusion}
\label{sssec:Kimura}

The Kimura diffusion is an important mathematical tool in population genetics as a model of the propagation of a certain genotype among a population \cite{Kimura}. 
In the classical setting, it is described by the stochastic differential equation (SDE) $\mathrm d Z_{t}=\sqrt{2Z_{t}}\mathrm dB_{t}+b\left(Z_{t}\right)\mathrm dt$ with $Z_{0}=z>0$, $b$ being a drift coefficient and $0$ being an absorbing boundary. 
Correspondingly, the \emph{Kimura equation} is the partial differential equation (PDE) 
\begin{equation}
\partial_{t}u\left(z,t\right)=z\partial_{z}^{2}u\left(z,t\right)+b\left(z\right)\partial_{z}u\left(z,t\right) , \label{eqn:KimuraEqn}
\end{equation}
for $z,t>0$, equipped with the Dirichlet boundary condition at $0$.
Characterized by the degeneracy in diffusion, this pair of equations has been of great interest to analysts and probabilists. 
Properties of the solutions (e.g., existence, uniqueness, regularity, estimation,
etc.) have been extensively studied for the Kimura diffusion, as well as certain of its generalizations, in various contexts \cite{Ethier76,measure_value_proc_popul_gen,wellposed_mart_deg_diff_dynamic_population,EM11,AthreyaBarlowBassPerkins02}.
Both analytic and probabilistic tool-kits have been developed
to treat degenerate diffusion operators, replacing classical techniques
that are restricted to strict elliptic/parabolic operators. 

The specific Kimura diffusion that will be examined later under a stochastic potential is the model with $b\equiv0$, but the methods developed in this study are extendable to cases with general $b$.
Let $q_{0}\left(z,w,t\right)$ denote the fundamental solution to
(\ref{eqn:KimuraEqn}) with $b\equiv0$, explicitly determined in terms of Bessel function \cite{wfeq_Epstein_Mazzeo,chenWF,chenDEG}:
\begin{equation}
    q_{0}\left(z,w,t\right):= \sqrt{ \frac{z}{w} } \frac{e^{-\frac{z+w}{t}}}{t}I_1\p{\frac{2 \sqrt{z w}}{t}} \label{eqn:q0Formula}
\end{equation}
for every $z,w,t>0$. 
That is, if the Kimura equation is equipped with the initial data $u_{0}\left(z\right)$, then the solution is given by 
\begin{equation}
    \label{eqn:detKimuraEqnSolution}
    u_{0}\left(z,t\right) = \int_{0}^{\infty}u_{0}\left(w\right)q_{0}\left(z,w,t\right)dw,
\end{equation}
for every $z,t>0$. 
We will refer to \ref{eqn:detKimuraEqnSolution} as the \emph{deterministic solution}, which, as we will see later, serves as the starting point in the construction of the solution to the stochastic Kimura equation. 

In latter sections, a perturbation of the Kimura equation will also
be considered:
\[
    \partial_{t}u\left(z,t\right) = z\partial_{z}^{2}u\left(z,t\right)+u\left(z,t\right)V\left(z, t\right),
\]
where $V$ is a potential function whose fundamental solution will be denoted by $q_0^{V}\left(z,w,t\right)$. When $V$ is bounded and time-independent, a sharp regularity theory on $q_0^{V}\left(z,w,t\right)$ is developed in \cite{chenDEG} via the Duhamel perturbation method, which produces accurate estimates on the comparison between $q_0^{V}\left(z,w,t\right)$ and $q_{0}\left(z,w,t\right)$ near the boundary for small times. 
Details of these results will be presented in $\mathsection\ref{ssec:bgKim}$.

\subsubsection*{1.1.2. Parabolic Anderson Model}
\label{sssec:PAM}

SPDEs have become a novel and popular tool in modeling phenomena observed
in scientific experiments in random environments. 
Following the seminal papers \cite{dalang, nualart}, many classical deterministic PDEs have been re-visited when the domain is subject to certain random noise, and a fast-growing set of theories is being developed for solutions to such SPDEs \cite{dalang, nualart, balanHAM, hairer2023global, HAUSENBLAS20204174, Motyl2013, Berger2023}.
Most notably, the parabolic Anderson model (PAM) is the classical heat equation perturbed by a Gaussian random potential.
The one-dimensional model can be written as 
\begin{equation}
    \partial_{t}u\left(z,t\right) = \frac{1}{2}\partial_{z}^{2}u\left(z,t\right)+u\left(z,t\right)\dot{W}\left(z,t\right), \label{eqn:PAM}
\end{equation}
for $z \in \R$ and $t>0$, where $\dot{W} \left(z,t\right)$ denotes a Gaussian space-time noise on some filtered probability space $\left(\Omega,\mathcal{F},\mathbb{P}\right)$. 
Formally speaking, $\{ \dot{W}\left(z,t\right) \mid z\in\mathbb{R},t>0\} $ can be viewed as a centered Gaussian family with covariance 
\begin{equation}
    \mathbb{E}\left[\dot{W}\left(z_1,t_1\right)\dot{W}\left(z_2,t_2\right)\right] = f\left(z_1-z_2\right)\gamma\left(t_1-t_2\right),\label{eqn:formalNoise}
\end{equation}
for $z_1,z_2\in\mathbb{R}$ and $t_1,t_2>0$, where $f:\mathbb{R}\rightarrow\mathbb{R}^{+}$ and $\gamma:\R\rightarrow\mathbb{R}^{+}$ are two non-negative definite, symmetric, and locally integrable kernels. 
To solve (\ref{eqn:PAM}) with (deterministic) initial data $u_{0}\left(z\right)$, we seek a process $\left\{ u\left(z,t\right) \mid z \in \R, t \geq 0\right\} \subseteq L^{2}\left(\Omega\right)$, to which we refer as a \emph{(mild) solution} to (\ref{eqn:PAM}), such that for every $z\in\mathbb{R}$ and $t>0$,
\[
    u\left(z,t\right) = \int_{\mathbb{R}} g_{t}\left(z,w\right) u_{0}\left(w\right) dw + \int_{0}^{t} \int_{\mathbb{R}} g_{t-\tau}\left(z,w\right) u\left(w,\tau\right) W\left(dw,d\tau\right)\text{ a.s.},
\]
where $g_{t}$$\left(z,w\right)= e^{-\left|z-w\right|^{2}/2} / \sqrt{2\pi t}$ is the standard heat kernel, and the stochastic integral with ``respect to $W\left(dw,d\tau\right)$'' is defined as a generalized It\^o integral, following the theory of Malliavin calculus. 
In $\mathsection\ref{ssec:bgMalCal}$, we review all the necessary elements from Malliavin calculus, including the construction and the properties of such stochastic integrals. 

Common approaches toward the study of PAM combine Malliavin calculus with classical heat kernel analysis, yielding abundant results on the solution \cite{dalang, nualart, BalanFCN, balanCONT, balanRIC, PAMCarmonaMolchanov}.
Alternatively, another approach is the theory of rough paths developed by Hairer \cite{HairerRPT, PAMHairerLabbe}.
The PAM has also been studied in other settings, e.g., on $\mathbb{Z}^{d}$ \cite{gaertner2004}, on the hypercube \cite{AVENA20203369}, and on Heisenberg groups \cite{BAUDOIN2023109920}, as well as subject to L\'evy random noise \cite{Berger2023}.
For degenerate parabolic SPDEs, the Cauchy problem has been studied in the semilinear and quasilinear cases \cite{HOFMANOVA20134294,debussche:hal-00863829}.

\subsection{Stochastic Kimura Equation and Main Results}
\label{ssec:SKEandResults}

Inspired by the works reviewed in the previous subsection, we decide to bring the two models, the Kimura diffusion and the PAM, together and investigate the following stochastic Kimura equation:
\begin{equation}
\label{eqn:SKE1}
\begin{cases}   
    &\partial_{t}u\left(z,t\right) = z\partial_{z}^{2}u\left(z,t\right) + u\left(z,t\right)\dot{W}\left(z,t\right) \quad\quad z, t > 0,\\
    &\hspace{3.2mm}u(z, 0) = u_0(z) \hfill z > 0,\\
    &\hspace{3.8mm}u(0, t) = 0 \hfill t > 0.
\end{cases}
\end{equation}
where $\dot{W}$ is the same as in (\ref{eqn:formalNoise}). 
Similarly as in the PAM case, a (mild) solution to (\ref{eqn:SKE1}) is a process $\left\{ u\left(z,t\right) \mid z, t \geq 0 \right\} \subseteq L^{2}\left(\Omega\right)$
such that for every $z,t>0$,
\begin{align}
    u\left(z,t\right) = \int_0^\infty q_{0}\left(z,w,t\right) & u_{0}\left(w\right)dw \notag \\
    & + \int_{0}^{t}\int_0^\infty q_{0}\left(z,w,t-\tau\right) u\left(w,\tau\right )W\left(dw,d\tau\right) \text{ a.s.} \label{eqn:SKE1Soln}
\end{align}

The purpose of studying (\ref{eqn:SKE1}) has multiple folds. 
First, we aim to develop a solution theory for stochastic degenerate diffusion equations, which would yield applications in modeling diffusion processes in random mediums with absorbing boundaries, e.g., genotype propagation with random hibernation/revival.
The model of the stochastic Kimura equation is a good starting point since the deterministic equation is well studied with abundant results on the fundamental solution $q_{0}\left(z,w,t\right)$. 
Second, we want to extend the existing literature on the PAM to the settings of degenerate diffusions and investigate how the current methods in constructing the solution to the PAM, as well as the properties of the solution, may change in the presence of degeneracy in the diffusion. 
Furthermore, we are interested in studying the interaction between the degenerate diffusion operator and the stochastic potential in order to analyze their ``joint effect'' on the solution; to this end, we will pay particular attention to the behavior of the solution near the boundary $0$, and compare the solution to the deterministic solution in a proper sense.

In $\mathsection\ref{sec:eu}$ we prove that (\ref{eqn:SKE1}) admits a solution $\left\{ u\left(z,t\right) \mid z,t \geq 0 \right\}$ that satisfies (\ref{eqn:SKE1Soln}) and that this solution is unique up to modification (Theorems \ref{thm:euSTWN} and \ref{thm:euColored}). 
We adopt a strategy that is similar to the one invoked in \cite{balanHAM, nualart}, which is to combine the Wiener chaos expansion from the Malliavin calculus and the Duhamel method from PDEs. However, the techniques we apply are substantially different from those utilized in the PAM
case, because (1) although $q_{0}\left(z,w,t\right)$ is explicitly determined, unlike $g_{t}\left(z,w\right)$, $q_{0}\left(z,w,t\right)$ does not have a closed-form expression, nor certain desirable features such as symmetry; and (2) since our model is restricted on $\R_+$, some convenient technical elements based on Fourier analysis are not applicable in our case. 
We establish our result in two steps: first we solve (\ref{eqn:SKE1}) when $\dot{W}$ is the
space-time white noise, i.e., when both $f$ and $\gamma$ in (\ref{eqn:formalNoise}) are Dirac delta $\delta_0$, and next extend the construction of the solution to the colored noise setting.

Taking a close look at the behaviors of $u\left(z,t\right)$, we observe that the impact of the stochastic potential is most significant near the boundary, which makes the behavior of $u\left(z,t\right)$ for small $z$ fundamentally different from that of the solution to the deterministic Kimura equation $u_{0}\left(z,t\right)$.
To better understand the role of $\dot{W}$ near the boundary, in $\mathsection\ref{sec:ratio}$ we also introduce a degenerate coefficient to $\dot{W}$ and consider the following stochastic Kimura equation with a degenerate noise.
That is, for some $\beta > 0$
\begin{equation}
    \partial_{t}u\left(z,t\right) = z\partial_{z}^{2}u\left(z,t\right) + u\left(z,t\right) \hat z^\beta \dot{W}\left(z,t\right), \label{eqn:SKE2intro}
\end{equation}
where $\hat z := z \wedge 1$.
Similarly, (\ref{eqn:SKE2intro}) also admits a unique solution $\left\{ u\left(z,t\right) \mid z,t \geq 0\right\}$. 
It turns out that so long as $\beta>0$, the degeneracy in the stochastic potential will ``tame'' $u\left(z,t\right)$ enough so that it remains comparable to $u_{0}\left(z,t\right)$ under the $L^{2}-$ norm for some finite time $t$ and for $z$ all the way to the boundary. 

To be specific, we prove in Theorem \ref{thm:ratioBdGen} that, when $u_{0}\equiv1$ and $f$ satisfies some additional assumptions, there exists $T \in (0,1)$ such that
\begin{equation}
    \label{eqn:ratioSpace}
    \sup_{\substack{z > 0 \\ t \in [0,T]}} \mathbb{E}\left[\left(\frac{u\left(z,t\right)}{u_{0}\left(z,t\right)}\right)^{2}\right] < \infty.
\end{equation}
In addition, we also obtain that at every fixed $z>0$, 
\begin{equation}
    \label{eqn:ratioTime}
    \lim_{t\searrow0} \mathbb{E}\left[\left(\frac{u\left(z,t\right)}{u_{0}\left(z,t\right)} - 1\right)^{2}\right] = 0,
\end{equation}
i.e., $u\left(z,t\right)$ stays close to $u_{0}\left(z,t\right)$, again, in the $L^{2}-$ sense, for sufficiently small $t$, which is consistent with (3.10) in \cite{chenDEG}, a result established for the deterministic equation but with a bounded potential.

In $\mathsection\ref{sec:cont}$ we examine the regularity of $u\left(z,t\right)$, particularly its continuity in $z$ at the boundary. 
We discover that, in order to acquire boundary continuity, we need to impose further assumptions on the rate of degeneracy of the stochastic potential.
Indeed, when $\beta > 1/4$ we are able to prove a H\"older continuity result on $u\left(z,t\right)$ in $z$ all the way to the boundary in Theorem \ref{thm:contColored}. 
Along the way, in $\mathsection\ref{sec:moment}$ we also obtain results on $L^{p}-$ integrability for all $p\geq2$, as well as higher moment estimates, for $u\left(z,t\right)$ for all $z,t>0$. 

The aforementioned results are only part of our study of the stochastic Kimura equation as the first step in an attempt to advance our general knowledge on stochastic degenerate diffusion equations. 
There are other aspects of the work that are still ongoing and will not be addressed
here. 
For example, our methods can be extended to study the following two variations of (\ref{eqn:SKE1}) without substantial difficulty: (1) with an order $\alpha$ of degeneracy in the diffusion operator for $\alpha$ from some proper range, i.e., 
\[
    \partial_{t} u\left(z,t\right) = z^{\alpha}\partial_{z}^{2}u\left(z,t\right) + u\left(z,t\right) \dot{W}\left(z,t\right),
\]
and (2) with a drift coefficient $b$ satisfying some proper conditions, i.e., 
\[
    \partial_{t} u\left(z,t\right) = z\partial_{z}^{2} u\left(z,t\right) + b\left(z\right)\partial_{z} u\left(z,t\right) + u\left(z,t\right) \dot{W}\left(z,t\right).
\]
In addition, the framework developed in this paper also allows us to study long-term asymptotics of $u\left(z,t\right)$ and potentially establish results on intermittency. 

\subsection{Contents and Notations}
\label{ssec:outline}

In $\mathsection\ref{sec:bg}$ we review the necessary elements from the Malliavin calculus and technical results from the study of the deterministic Kimura equation.
In $\mathsection \ref{sec:results}$, we set up the model of the stochastic Kimura equation then proceed to study various aspects of the solution and prove results on existence, uniqueness, estimates, and continuity of the solution.
$\mathsection\ref{sec:conclusion}$ outlines some related problems and future directions.
Useful formulas on Bessel functions are deposited in $\mathsection\ref{sec:appendix}$.

We write $\R_+$ to mean the interval $(0, \infty)$.
For $a, b \in \R$, we write $a \vee b := \max\set{a, b}$ and $a \wedge b := \min\set{a, b}$.
For $z > 0$, we denote $\hat z := z \wedge 1$.
We denote by $\mathcal D(\R_+^2)$ the set of all smooth, compactly supported functions defined on $\R_+^2$.
We denote by $C_b(\R_+)$ the space of all bounded continuous functions defined on $\R_+$, equipped with the uniform norm 
\[
    \|f\|_u := \sup_{x > 0} |f(x)|.
\]
For $\alpha \in \C$, we denote by $I_\alpha$ and $J_\alpha$ the (modified) Bessel functions of the first kind of order $\alpha$.
For $q \geq p \geq 0$, $a_1, \ldots, a_p, b_1, \ldots, b_q \in \R$, and $x \in \C$, we denote by
\[
    {}_pF_q
    \left[
    \hspace{-1mm}
    \begin{array}{cc}
        a_1, \ldots, a_p  \\
        b_1, \ldots, b_q 
    \end{array} \hspace{-2mm}
    \mathrel{\Big|} x
    \right]
\]
the generalized Hypergeometric function.
The definitions of these special functions are reviewed in $\mathsection \ref{sec:appendix}$.

\section{Background} \label{sec:bg} 


\subsection{The Malliavin Calculus}
\label{ssec:bgMalCal}

In this section, we present some prerequisite material regarding the Malliavin calculus which is essential for discussing the model of the stochastic Kimura equation.
For details, we refer the reader to \cite{introMalCal, minicourse, advSA, nualartBook}.

Let $f$ and $\gamma$ be as in (\ref{eqn:formalNoise}), and consider the inner product
\begin{equation}
    \label{eqn:innerProd}
    \inner{\varphi}{\psi}_H := \int_{\R_+^4} f(x-y)\gamma(r-s)\varphi(x,r)\psi(y,s)\mathrm dx\mathrm dy\mathrm dr\mathrm ds,
\end{equation}
for $\varphi, \psi \in \mathcal D(\R_+^2)$.
We define the Hilbert space $(H, \inner{\cdot}{\cdot}_H)$ as the completion of $\mathcal D(\R_+^2)$ with respect to $\inner{\cdot}{\cdot}_{H}$.
It can be shown that there exists an isonormal Gaussian process $W = \set{W(h) \mid h \in H}$, on an appropriate probability space $(\Omega, \F, \P)$, such that
\begin{align*}
    \E[W(h)] \equiv 0 && \E[W(h_1)W(h_2)] = \inner{h_1}{h_2}_H.
\end{align*}

When viewed as a map between Hilbert spaces, $W : H \to W(H) =: \mathcal H \subseteq L^2(\Omega)$, is an isometry.
This framework is convenient because it allows us to study the noise process $W$ through well-behaved deterministic functions.

As it turns out, if $\F$ is the $\sigma$-algebra generated by $W$, then $L^2(\Omega)$ admits a decomposition into countably many closed images of $W$ under a special class of polynomials.
Let $n \geq 1$ and $H_n$ denote the $n$th Hermite polynomial.
The \emph{$n$th Wiener chaos space} $\mathcal H_n$ is defined as
\begin{equation}
    \label{eqn:wienerChaosSpaceDefn}
    \mathcal H_n := \overline{ \text{span}\set{H_n(W(h)) \mid h \in H, \|h\|_H = 1} },
\end{equation}
where the closure is taken in $L^2(\Omega)$.
Then,
\begin{equation}
    \label{eqn:l2Decomposition}
    L^2(\Omega) = \bigoplus_{n=0}^\infty \mathcal H_n,
\end{equation}
i.e., for any $X \in L^2(\Omega)$, for each $n \geq 0$ there exists a unique $X_n \in \mathcal H_n$ such that
\begin{equation}
    \label{eqn:wienerChaosDecomposition1}
    X = \sum_{n=0}^\infty X_n,
\end{equation}
where the series converges in $L^2(\Omega)$.
We remark that since $H_0 \equiv 1$, $\mathcal H_0 \cong \R$ is a space of constants and it can be shown that $X_0 = \E[X]$.
By \cite{nualart} (see the last line of page 62), for every $n \geq 1$ and $p \geq 2$, $\mathcal H_n \subseteq L^p(\Omega)$.
In fact, if $Y \in \mathcal H_n$, then 
\begin{equation}
    \label{eqn:lpInequality}
    \E[|Y|^p]^{\frac{1}{p}} \leq (p-1)^{\frac{n}{2}}\E[Y^2]^{\frac{1}{2}}.
\end{equation}

Below, we will demonstrate representations for these projections $X_n$ in terms of stochastic integrals in the sense of Skorohod, whose moments are related to the inner product $\inner{\cdot}{\cdot}_H$.

To this end, we first define a suitable notion of derivative.
Let $C_p^\infty (\R^N)$ denote the space of infinitely differentiable functions $F : \R^N\to \R$ which (along with their partial derivatives) have at most polynomial growth.
We say a random variable $X \in L^2(\Omega)$ is \emph{smooth} if there exists $F \in C_p^\infty (\R^N)$ and $h_1, \ldots, h_N \in H$ such that $X = F(W(h_1), \ldots, W(h_N))$.
Moreover, we denote by $\mathcal P \subseteq L^2(\Omega)$ the set of all such smooth random variables.

Consequently, for some $X = F(W(h_1), \ldots, W(h_N)) \in \mathcal P$, the \emph{Malliavin derivative} is an unbounded, closed linear operator $D : \mathcal P \to L^2(\Omega; H)$ given by
\begin{equation}
    \label{eqn:malDerivDefn}
    DX := \sum_{j=1}^N \partial_j F(W(h_1), \ldots, W(h_N)) h_j,
\end{equation}
where for $z, t > 0$,
\[
    D_{z, t}X = \sum_{j=1}^N \partial_j F(W(h_1), \ldots, W(h_N)) h_j(z, t),
\]
and without loss of generality, we may assume that $h_1, \ldots, h_N$ are orthonormal under $\inner{\cdot}{\cdot}_H$.

As with classical Sobolev spaces, we define the inner product
\begin{equation}
    \label{eqn:stochasticSobolevDefn}
    \inner{X}{Y}_{1,2} := \E[XY] + \E[\inner{DX}{DY}_H]
\end{equation}
for $X, Y \in \mathcal P$, and let $\mathbb D^{1,2}$ denote the closure of $\mathcal P$ with respect to $\inner{\cdot}{\cdot}_{1,2}$.
Note that $\mathcal P \subseteq \mathbb D^{1,2} \subseteq L^2(\Omega)$, and since $\mathcal P$ is dense in $L^2(\Omega)$, $\mathcal P$ is dense in $\mathbb D^{1,2}$ and hence $D$ can be extended to $\mathbb D^{1,2}$.

Viewing $D : \mathbb D^{1,2} \to L^2(\Omega; H)$, we are ready to introduce the notion of stochastic integral relevant to (\ref{eqn:SKE1}).
Let the \emph{divergence operator} $\delta$ be the adjoint operator of $D$.
Namely, the domain $\text{Dom}(\delta)$ of $\delta$ is the set of random variables $u \in L^2(\Omega; H)$ for which there exists $c_u > 0 $ such that,
\[
    |\E[\inner{DX}{u}_{H}]| \leq c_u \|X\|_{1,2},
\]
for all $X \in \mathbb D^{1,2}$; in this case, $\delta(u)$ is the unique element of $L^2(\Omega)$ such that
\begin{equation}
    \label{ref:adjointDefn}
    \E[\delta(u)X] = \E[\inner{DX}{u}_H].
\end{equation}
for all $X \in \mathbb D^{1,2}$.
As the adjoint of an unbounded and densely defined operator, $\delta$ is in turn also closed.
Furthermore, we use the notation
\begin{equation}
    \label{eqn:skorohodInt}
    \delta(u) := \int_{\R_+^2} u(z, t) W(\mathrm dz, \mathrm dt),
\end{equation}
where $W(\mathrm dz, \mathrm dt) = \dot W(z, t) \mathrm dz \mathrm dt$ (formally, $\dot W(z,t) = \partial_t\partial_z W(z,t)$).
This nomenclature is motivated by the fact that the divergence operator possesses many desirable properties of a typical integral and coincides with the It\^o stochastic integral in certain cases.
For this reason, we also refer to $\delta$ as the \emph{Skorohod integral}, and if $u \in \text{Dom}(\delta)$, we say that $u$ is \emph{Skorohod integrable}.

Continuing along these lines, we define the multiple Skorohod integral to arrive at representations for the random variables $X_n$ in (\ref{eqn:wienerChaosDecomposition1}).
For $n \geq 1$, let $H^{\otimes n}$ denote the $n$th tensor product of $H$, with the corresponding inner product and norm denoted by $\inner{\cdot}{\cdot}_n$ and $\|\cdot\|_n$, respectively.
Then for $f_n \in H^{\otimes n}$, iteratively applying the Skorohod integral yields
\begin{equation}
    \label{eqn:multipleSkorohodInt}
    \mathcal I_n(u) := \int_{\R_+^{2n}} f_n(z_1, t_1, \ldots, z_n, t_n) W(\mathrm dz_1, \mathrm dt_1) \cdots W(\mathrm dz_n, \mathrm dt_n).
\end{equation}

Given $f_n \in H^{\otimes n}$ and $g_m \in H^{\otimes m}$, we denote by $\tilde f_n$ the symmetrization of $f_n$, given by
\begin{equation}
    \label{eqn:symmetrizationDefn}
    \tilde f_n(z_1, t_1, \ldots, z_n, t_n) := \frac{1}{n!} \sum_{\sigma \in S_n} f(z_{\sigma(1)}, t_{\sigma(1)}, \ldots, z_{\sigma(n)}, t_{\sigma(n)}),
\end{equation}
where $S_n$ is the set of all permutations on $n$ elements.
Then $\mathcal I_n(f_n) = \mathcal I_n(\tilde f_n)$ and 
\begin{equation}
    \label{eqn:skorohodIntIsometry}
    \E[\mathcal I_n(f_n)\mathcal I_m(g_m)] = 
    \begin{cases}
        n! \langle\tilde f_n,\tilde g_m\rangle_n &\text{ if } n = m\\
        0 &\text{ otherwise }
    \end{cases}.
\end{equation}

Conversely, for every $n \geq 1$, $X_n \in \mathcal H_n$ and there exists a unique $f_n \in \R_+^{2n} \to \R$ symmetric such that $X_n = \mathcal I_n(f_n)$.
In light of this, (\ref{eqn:wienerChaosDecomposition1}) becomes
\begin{equation}
    \label{eqn:wienerChaosDecomposition2}
    X = \sum_{n=0}^\infty \mathcal I_n(f_n),
\end{equation}
where $\mathcal I_0(f_0) = \E[X]$.
Similarly, if $u \in L^2(\Omega; H)$, then for every $z, t> 0$, there exists a unique $f_n(\cdot; z, t) \in H^{\otimes n}$ symmetric in it first $n$ pairs of variables such that
\begin{equation}
    \label{eqn:wienerChaosDecomposition3}
    u(z, t) = \sum_{n=0}^\infty \mathcal I_n(f_n(\cdot; z, t)),
\end{equation}
and $\mathcal I_0(f_0(z, t)) = \E[u(z,t)]$.


We close this section by presenting the following equivalent criterion for Skorohod integrability.

\begin{proposition}[Proposition 1.3.7 of \cite{nualartBook}]
    \label{prop:swapSeriesAndDiv}
    Let $u \in L^2(\Omega; H)$ with the expansion (\ref{eqn:wienerChaosDecomposition3}).
    As a function $\R_+^{2(n+1)} \to \R$, $f_n \in H^{\otimes (n+1)}$. 
    Then $u \in \text{Dom}(\delta)$ if and only if the series
    \begin{equation}
        \label{eqn:swapSeriesAndDiv}
        \delta(u) = \sum_{n=0}^\infty \mathcal I_{n+1}(f_n)
    \end{equation}
    converges in $L^2(\Omega)$.
\end{proposition}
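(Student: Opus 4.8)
The plan is to exploit the duality that \emph{defines} the divergence, namely $\E[\delta(u)X] = \E[\inner{DX}{u}_H]$ for all $X \in \mathbb D^{1,2}$, and to test it against the total family of random variables $X = \mathcal I_m(g_m)$ with $g_m \in H^{\otimes m}$ symmetric (these lie in $\mathbb D^{1,2}$ and their linear span is dense in $L^2(\Omega)$ by (\ref{eqn:l2Decomposition})). The entire computation rests on the single fact that the Malliavin derivative lowers the chaos order by one: $D_{z,t}\mathcal I_m(g_m) = m\,\mathcal I_{m-1}(g_m(\cdot; z,t))$, which follows from (\ref{eqn:malDerivDefn}) by applying $H_m'(x) = m H_{m-1}(x)$ on a single orthonormal direction and extending by linearity and density. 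Feeding this into the right-hand side, pairing the ``free'' $H$-coordinate against the expansion (\ref{eqn:wienerChaosDecomposition3}) of $u$, and using the orthogonality of distinct chaoses together with the isometry (\ref{eqn:skorohodIntIsometry}), one gets for every $m \geq 1$
\[
    \E[\inner{D\mathcal I_m(g_m)}{u}_H] = m!\,\inner{g_m}{\widetilde{f_{m-1}}}_m = \E\!\left[\mathcal I_m(f_{m-1})\,\mathcal I_m(g_m)\right],
\]
while for $m = 0$ both sides vanish since $D$ annihilates constants. So, heuristically, $\delta(u)$ can only be $G := \sum_{n\geq0}\mathcal I_{n+1}(f_n)$.

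For the ($\Rightarrow$) direction, assume $u \in \text{Dom}(\delta)$ and expand $\delta(u) = \sum_{k\geq0}\mathcal I_k(h_k)$ in its own Wiener chaos series. Pairing both sides of $\E[\delta(u)\mathcal I_m(g_m)] = \E[\inner{D\mathcal I_m(g_m)}{u}_H]$ with (\ref{eqn:skorohodIntIsometry}) gives $m!\,\inner{h_m}{g_m}_m = m!\,\inner{\widetilde{f_{m-1}}}{g_m}_m$ for every symmetric $g_m$, hence $h_m = \widetilde{f_{m-1}}$ for all $m \geq 1$, and $h_0 = 0$ by testing against constants. Therefore $\delta(u) = \sum_{n\geq0}\mathcal I_{n+1}(f_n)$, and this series converges in $L^2(\Omega)$, to $\delta(u)$.

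For the ($\Leftarrow$) direction, assume $G := \sum_{n\geq0}\mathcal I_{n+1}(f_n)$ converges in $L^2(\Omega)$; since the summands lie in mutually orthogonal chaoses this is equivalent to $\sum_{n\geq0}(n+1)!\,\|\widetilde{f_n}\|_{n+1}^2 < \infty$. Given an arbitrary $X = \sum_{m\geq0}\mathcal I_m(g_m) \in \mathbb D^{1,2}$, I would prove $\E[\inner{DX}{u}_H] = \E[GX]$ by first checking it for each $\mathcal I_m(g_m)$ separately (the identity displayed above together with $\E[G\,\mathcal I_m(g_m)] = \E[\mathcal I_m(f_{m-1})\mathcal I_m(g_m)]$ by chaos orthogonality) and then summing over $m$; the interchange of the sum with $\E[\inner{D\,\cdot\,}{u}_H]$ is justified by Cauchy--Schwarz in $L^2(\Omega;H)$ using that $\sum_m m\cdot m!\,\|\widetilde{g_m}\|_m^2 = \E[\|DX\|_H^2] < \infty$ and $\|G\|_{L^2(\Omega)} < \infty$. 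Then $|\E[\inner{DX}{u}_H]| = |\E[GX]| \leq \|G\|_{L^2(\Omega)}\|X\|_{L^2(\Omega)} \leq \|G\|_{L^2(\Omega)}\|X\|_{1,2}$, so $u \in \text{Dom}(\delta)$ and, by uniqueness of the adjoint, $\delta(u) = G$.

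The step I expect to be the main obstacle is the rigorous bookkeeping in the ($\Leftarrow$) direction: making $D_{z,t}\mathcal I_m(g_m) = m\,\mathcal I_{m-1}(g_m(\cdot;z,t))$ precise as an identity in $L^2(\Omega;H)$ (including that $\mathcal I_m(g_m)\in\mathbb D^{1,2}$ with the correct derivative norm), and controlling the resulting double series so that summation commutes with the bilinear pairing $\E[\inner{D\,\cdot\,}{u}_H]$. Everything else reduces to repeated use of the isometry (\ref{eqn:skorohodIntIsometry}) and the orthogonal decomposition (\ref{eqn:l2Decomposition}).
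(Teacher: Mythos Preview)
The paper does not supply its own proof of this proposition: it is quoted verbatim as Proposition~1.3.7 of \cite{nualartBook} and used as a black box, so there is no in-paper argument to compare against. Your proposal is correct and is essentially the standard proof one finds in Nualart's book: the derivative-lowering identity $D\mathcal I_m(g_m)=m\,\mathcal I_{m-1}(g_m(\cdot\,;z,t))$ combined with the chaos isometry (\ref{eqn:skorohodIntIsometry}) forces the $m$th chaos component of $\delta(u)$ to be $\mathcal I_m(\widetilde{f_{m-1}})$, and the converse direction is exactly the duality estimate you wrote. The bookkeeping you flag as the potential obstacle (that $\mathcal I_m(g_m)\in\mathbb D^{1,2}$ with $\E\|D\mathcal I_m(g_m)\|_H^2=m\cdot m!\|g_m\|_m^2$, and the Cauchy--Schwarz control of the double series) is routine once the derivative formula is in hand, so your outline would go through without surprises.
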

To check convergence of the series (\ref{eqn:swapSeriesAndDiv}) in $L^2(\Omega)$, we often make use of the property (\ref{eqn:skorohodIntIsometry}) of the multiple Skorohod integral.

\subsection{The Kimura Equation}
\label{ssec:bgKim}

In this section, we discuss the \emph{deterministic} Kimura equation, which is the other prerequisite for studying (\ref{eqn:SKE1}).
We will rely on the results on the deterministic Kimura equation to investigate the stochastic analog. 
Recall that we are interested in the Kimura equation (\ref{eqn:KimuraEqn}) with $b \equiv 0$.
In this section, following \cite{chenDEG}, we present an introduction to the slightly more general case of a constant drift $b \equiv \nu < 1$.
That is, we explore the equation
\begin{equation}
    \label{eqn:kimuraDrift}
    \partial_t u(z,t) = z \partial_z^2 u(z,t) + \nu \partial_z u(z,t),
\end{equation}
for $z, t > 0$, which has the fundamental solution
\begin{equation}
    \label{eqn:fsDriftDefn}
    q_\nu(z,w,t) := \frac{z^{\frac{1-\nu}{2}} w^{\frac{\nu-1}{2}}}{t} e^{-\frac{z+w}{t}} I_{1-\nu} \p{ \frac{2 \sqrt{zw}}{t} }.
\end{equation}
In particular, for $\nu = 0$, we recover (\ref{eqn:q0Formula}).
Furthermore, when $\nu = 0$, we have the following estimate.
\begin{proposition}
    \label{prop:gaussianBd}
    For every $z,w,t > 0$ we have
    \begin{equation}
        \label{eqn:gaussianBd}
        q_0(z,w,t) \leq \frac{z}{t^2}e^{-\frac{(\sqrt{z}-\sqrt{w})^2}{t}}
    \end{equation}
    and further if $zw \geq t^2$, then there exists a constant $C > 0$ such that
    \begin{equation}
        \label{eqn:gaussianBdRefined}
        q_0(z,x,t) \leq C \cdot \frac{z^{\frac{1}{4}}w^{-\frac{3}{4}}}{\sqrt t} e^{- \frac{(\sqrt{z} - \sqrt{w})^2}{t}}.
    \end{equation}
\end{proposition}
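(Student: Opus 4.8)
The plan is to deduce both inequalities directly from the explicit formula (\ref{eqn:q0Formula}), combining the algebraic identity $z+w-2\sqrt{zw}=(\sqrt z-\sqrt w)^2$ with two standard bounds on the modified Bessel function $I_1$: a crude exponential bound valid for all arguments, which gives (\ref{eqn:gaussianBd}), and the sharp large-argument asymptotic, which gives the refined estimate (\ref{eqn:gaussianBdRefined}) once $zw\geq t^2$ forces the Bessel argument to be bounded below.

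For (\ref{eqn:gaussianBd}) I would first record the elementary inequality $I_1(x)\leq \tfrac{x}{2}e^{x}$ for all $x\geq 0$. This follows from the power series $I_1(x)=\sum_{k\geq 0}\frac{(x/2)^{2k+1}}{k!\,(k+1)!}$: since $(k+1)!\geq k!$, a termwise comparison gives $I_1(x)\leq \tfrac{x}{2}\sum_{k\geq 0}\frac{(x/2)^{2k}}{(k!)^2}=\tfrac{x}{2}I_0(x)$, and $I_0(x)\leq e^{x}$ (e.g.\ from $I_0(x)=\frac{1}{2\pi}\int_0^{2\pi}e^{x\cos\theta}\,\mathrm d\theta$). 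Substituting $x=2\sqrt{zw}/t$ into (\ref{eqn:q0Formula}) and simplifying the prefactor, $\sqrt{z/w}\cdot\sqrt{zw}=z$, while the exponentials combine as $-\frac{z+w}{t}+\frac{2\sqrt{zw}}{t}=-\frac{(\sqrt z-\sqrt w)^2}{t}$, yields
\[
    q_0(z,w,t)\ \leq\ \sqrt{\frac zw}\,\frac{e^{-\frac{z+w}{t}}}{t}\cdot\frac{\sqrt{zw}}{t}\,e^{\frac{2\sqrt{zw}}{t}}\ =\ \frac{z}{t^2}\,e^{-\frac{(\sqrt z-\sqrt w)^2}{t}}.
\]

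For (\ref{eqn:gaussianBdRefined}) I would instead use the classical large-argument behaviour $I_1(x)\sim e^{x}/\sqrt{2\pi x}$ as $x\to\infty$. Since $x\mapsto \sqrt x\,e^{-x}I_1(x)$ is continuous on $[2,\infty)$ and has a finite limit at infinity, it is bounded there, so there is $C>0$ with $I_1(x)\leq C\,e^{x}x^{-1/2}$ for every $x\geq 2$. The hypothesis $zw\geq t^2$ is precisely what ensures $x:=2\sqrt{zw}/t\geq 2$, so this bound applies; plugging it into (\ref{eqn:q0Formula}) and collecting powers of $z$, $w$, $t$ (the exponentials again combining into $e^{-(\sqrt z-\sqrt w)^2/t}$) gives
\[
    q_0(z,w,t)\ \leq\ C\,\sqrt{\frac zw}\,\frac{1}{t}\cdot\frac{\sqrt t}{(zw)^{1/4}}\,e^{-\frac{(\sqrt z-\sqrt w)^2}{t}}\ =\ C\,\frac{z^{1/4}w^{-3/4}}{\sqrt t}\,e^{-\frac{(\sqrt z-\sqrt w)^2}{t}},
\]
after absorbing a harmless factor of $2^{-1/2}$ into $C$.

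The only point requiring care is the passage to the uniform bound $I_1(x)\leq C\,e^{x}x^{-1/2}$ on the whole range $x\geq 2$: the asymptotic relation alone is not enough, and one must note that $\sqrt x\,e^{-x}I_1(x)$ extends to a bounded continuous function on $[2,\infty)$ because of its finite limit at infinity (alternatively, one may invoke the explicit monotone bound $I_1(x)\leq e^{x}/\sqrt{2\pi x}$ for $x>0$). Everything else is routine manipulation of the closed form (\ref{eqn:q0Formula}); I expect no further obstacle.
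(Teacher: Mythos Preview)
Your proof is correct. Both inequalities follow exactly as you describe: the first from $I_1(x)\le \tfrac{x}{2}I_0(x)\le\tfrac{x}{2}e^x$, and the second from the large-argument bound $I_1(x)\le C e^x x^{-1/2}$ for $x\ge 2$, which is precisely what the hypothesis $zw\ge t^2$ guarantees for $x=2\sqrt{zw}/t$. The algebra collapsing the exponentials to $e^{-(\sqrt z-\sqrt w)^2/t}$ and the powers to $z/t^2$ and $z^{1/4}w^{-3/4}/\sqrt t$, respectively, is routine and you have done it correctly.

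There is no proof in the paper to compare against: Proposition~\ref{prop:gaussianBd} is stated in the background section without proof, as a known estimate on the deterministic fundamental solution (the surrounding text attributes the analysis of $q_\nu$ to \cite{chenDEG,chenWF}). Your derivation is the natural one and would be the standard way to justify these bounds from the explicit formula~(\ref{eqn:q0Formula}).

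One small caution on your parenthetical remark: the inequality $I_1(x)\le e^x/\sqrt{2\pi x}$ does \emph{not} hold for all $x>0$ (for instance near $x\approx 1$ the left side slightly exceeds the right), so your primary justification via continuity of $\sqrt{x}\,e^{-x}I_1(x)$ on $[2,\infty)$ together with its finite limit is the right one to keep.
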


In addition, the authors of \cite{chenDEG} compare the behavior of $q_\nu(z,w,t)$ with that of the solution to (\ref{eqn:kimuraDrift}) subject to a bounded, deterministic potential $V$:
\begin{equation}
    \label{eqn:kimuraDriftPotential}
    \partial_t u^V(z,t) = z \partial_z^2 u^V(z,t) + \nu \partial_z u^V(z,t) + u^V(z,t) V(z).
\end{equation}
Namely, if $q_\nu^V(z,w,t)$ denotes the fundamental solution to (\ref{eqn:kimuraDriftPotential}), then the authors show that
\begin{equation}
    \label{eqn:fsRatio}
    \sup_{z,w > 0} \abs{ \frac{q_\nu^V(z,w,t)}{q_\nu(z,w,t)} - 1} \leq e^{t \|V\|_u} - 1,
\end{equation}
which allows them to establish a sharp regularity theory on $q_\nu^V(z,w,t)$ for small $t$.

Following the Duhamel perturbation method, it is possible to express the fundamental solution $q_\nu^V(z,w,t)$ of (\ref{eqn:kimuraDriftPotential}) in terms of $q_\nu(z,w,t)$.
In $\mathsection \ref{sec:eu}$, we apply a similar method to (\ref{eqn:SKE1}); therefore, as a primer we include the Duhamel perturbation method derivation in \cite{chenDEG}.

For $z, w, t > 0$, $q_\nu^V(z,w,t)$ satisfies the integral equation
\begin{equation}
    \label{eqn:kimuraDriftPotentialSoln}
    q_\nu^V(z,w,t) = q_\nu(z,w,t) + \int_0^t \int_0^\infty q_\nu(z,x,t-\tau)q_\nu^V(x,w,\tau)\mathrm dx\mathrm d\tau.
\end{equation}

Notice that $q_\nu^V$ appears recursively in the integrand.
Formally unfolding (\ref{eqn:kimuraDriftPotentialSoln}) once, we obtain
\begin{align*}
    &q_\nu^V(z,w,t)\\
    &= q_\nu(z,w,t) + \int_0^t \int_0^\infty q_\nu(z,x,t-\tau)q_\nu^V(x,w,\tau)V(x)\mathrm dx\mathrm d\tau\\
    &= q_\nu(z,w,t) + \int_0^t\int_0^\infty q_\nu(z,x,t-\tau)\biggr(q_\nu(x,w,\tau) \\
    &\hspace{25mm} \left.  + \int_0^\tau \int_0^\infty \hspace{-3mm} q_\nu(x,x',\tau-\tau') q_\nu(x',w,\tau')V(x') \mathrm dx'\mathrm d\tau'\right) V(x)\mathrm dx \mathrm d\tau \\
    &= q_\nu(z,w,t) + \int_0^t \int_0^\infty q_\nu(z,x,t-\tau)q_\nu(x,w,\tau) V(x)\mathrm dx\mathrm d\tau \\
    &+ \int_0^t\int_0^{\tau} \int_{\R_+^2} q_\nu(z,x,t-\tau)q_\nu(x,x',\tau-\tau')q_\nu^V(x',w,\tau') V(x')V(x)\mathrm dx'\mathrm dx \mathrm d\tau'\mathrm d\tau.
\end{align*}
Continuing this procedure, set $q_{\nu,0}(z,w,t) := q_\nu(z,w,t)$ and for $n \geq 1$, if we inductively define
\[
    q_{\nu,n}(z,w,t) := \int_0^t \int_0^\infty q_\nu(z,z_n,t-t_n)q_{\nu, n-1}(z_n,w,t_n)V(z_n)\mathrm dz_n\mathrm dt_n,
\]
then after unfolding $n$ times, we obtain
\begin{align*}
    q_\nu^V(z,w,t) = \sum_{k=0}^n q_{\nu, k}(z,w,t) + \int_{\Delta_{n+1} t}\int_{\R_+^{n+1}} q_\nu^V(z_0,w,t_0)\prod_{i=1}^{n+1} q_\nu(z_i, z_{i-1}, t_{i}-t_{i-1})V(z_i)\mathrm d\mathbf{z}\mathrm d\mathbf{t},
\end{align*}
where $\Delta_{n+1} t : = \set{(t_0, \ldots, t_n) \in (0,t)^{n+1} \mid 0 < t_0 < \cdots < t_n < t}$, $\mathbf z = (z_0, \ldots, z_n)$, $\mathbf t = (t_0, \ldots, t_n)$, and we put $z_{n+1} = z$ and $t_{n+1} = t$.
Iterating this process at infinitum, it can be shown that 
\begin{equation}
    \label{eqn:fsSeries}
    q_\nu^V(z,w,t) := \sum_{n=0}^\infty q_{\nu, n}(z,w,t),
\end{equation}
converges absolutely and satisfies (\ref{eqn:kimuraDriftPotentialSoln}).
Using the fact that $q_\nu$ satisfies the Kolmogorov-Chapman equations, by a simple inductive argument,
\[
    q_{\nu, n}(z,w,t) \leq \frac{t^n\|V\|_u^n}{n!}q_\nu(z,w,t),
\]
from which (\ref{eqn:fsRatio}) directly follows.

Intuitively, when $t$ is small, the potential $V$ has not yet generated enough ``perturbation'', so $q_\nu^V(z,w,t)$ is expected to be close to $q_\nu(z,w,t)$.
(\ref{eqn:fsRatio}) not only gives a precise interpretation of this fact, since it implies that the difference
\[
    \abs{\frac{q_\nu^V(z,w,t)}{q_\nu(z,w,t)} - 1} = O(t),
\]
for small $t$, but the estimate is also uniform in the spatial variables $(z,w)$ which is sharper and more accurate than the standard heat kernel estimate.

\section{Stochastic Kimura Equation} \label{sec:results}
In this section, we combine the methods from \cite{balanHAM, balanCONT, nualart} and the results from \cite{chenDEG, chenWF} to study various aspects of the stochastic Kimura equation
\[
    \begin{cases}   
        &\partial_{t}u\left(z,t\right) = z\partial_{z}^{2}u\left(z,t\right) + u\left(z,t\right)\dot{W}\left(z,t\right) \quad\quad z, t > 0,\\
        &\hspace{3.2mm}u(z, 0) = u_0(z) \hfill z > 0,\\
        &\hspace{3.8mm}u(0, t) = 0 \hfill t > 0,
    \end{cases}.
\]
such as existence and uniqueness of a solution, the moment bounds, and the regularity of its sample paths.
We also consider the variation wherein a degeneracy is introduced to the noise in the equation,
\begin{equation}
    \label{eqn:SKE2}
    \begin{cases}   
        &\partial_{t}u\left(z,t\right) = z\partial_{z}^{2}u\left(z,t\right) + \hat z^\beta u\left(z,t\right)\dot{W}\left(z,t\right) \quad\quad z, t > 0,\\
        &\hspace{3.2mm}u(z, 0) = u_0(z) \hfill z > 0,\\
        &\hspace{3.8mm}u(0, t) = 0 \hfill t > 0,
    \end{cases}
\end{equation}
and investigate how the choice of $\beta > 0$ affects the various properties studied.
Moreover, we compare the behaviors of the solutions of (\ref{eqn:SKE1}) and (\ref{eqn:SKE2}) to that of (\ref{eqn:kimuraDrift}) with $\nu = 0$ near the boundary 0 and for small times.
\subsection{Existence and Uniqueness of a Solution} 
\label{sec:eu}

In this section, we prove that (\ref{eqn:SKE1}) admits a uniqueness mild solution, up to a modification.
\begin{definition}
    \label{defn:mildSolution}
    
    A square-integrable random field $u = \set{u(z,t) \mid z,t \geq 0}$ is a \emph{mild solution} to the stochastic Kimura equation (\ref{eqn:SKE1}) with deterministic initial condition $u_0 \in C_b(\R_+)$ if
    \begin{enumerate}[label=(\textbf{\alph*})]
        \item $u$ has a jointly measurable modification (again denoted by $u$) and for all $z, t \geq 0$,
        \begin{equation}
            \label{eqn:uniformL2Bd}
            \E[u^2(z,t)] < \infty;
        \end{equation}
        \item For all $z, t > 0$ the process $\set{\ind{(0,t)}(\tau)q_0(z,w,t-\tau)u(w,\tau) \mid w, \tau > 0}$ is Skorohod integrable and 
        \begin{equation}
            \label{eqn:SKE1IntForm}
            u(z,t) = \int_0^\infty q_0(z,w,t)u_0(w)\mathrm dw + \int_0^t\int_0^\infty q_0(z,w,t-\tau)u(w,\tau)W(\mathrm dw, \mathrm d\tau).
        \end{equation}
    \end{enumerate}
\end{definition}
Suppose $u$ is a mild solution to (\ref{eqn:SKE1}), then for every $z,t > 0$, $u$ admits the following Wiener chaos expansion
\begin{equation}
    \label{eqn:solnWienerChaosExpansion}
    u(z,t) = \sum_{n=0}^\infty \mathcal I_n(f_n(\cdot ; z, t)),
\end{equation}
for some $f_n(\cdot; z, t) \in H^{\otimes n}$.
Consequently, to compute $\E[u^2(z,t)]$ we now obtain
\[
    \E[u^2(z,t)] = \sum_{n=0}^\infty \E[\mathcal I_n^2(f_n(\cdot ; z, t))] = \sum_{n=0}^\infty n! \|\tilde f_n(\cdot;z,t)\|_{n}^2,
\]
so we turn our attention to finding an expression for $f_n(\cdot; z, t)$ and computing its norm.

Recall that by virtue of being a mild solution to (\ref{eqn:SKE1}), $u$ satisfies the integral equation (\ref{eqn:SKE1IntForm}).
Following the method of Duhamel, unfolding (\ref{eqn:SKE1IntForm}) recursively in the Skorohod integral and denoting $u_0(z,t) = \E[u(z,t)]$ yields
\begin{align*}
    &u(z,t) = u_0(z,t) + \int_0^t\int_0^\infty q_0(z,z_1,t_1-\tau)u(z_1,t_1)W(\mathrm dz_1, \mathrm dt_1)\\
    &\hspace{10mm} = u_0(z,t) + \int_0^t\int_0^\infty q_0(z,z_1,t_1-\tau)u_0(z_1,t_1)W(\mathrm dz_1, \mathrm dt_1)\\
    &+ \int_0^t\int_0^{t_1}\int_0^\infty\int_0^\infty q_0(z,z_1,t-t_1)q_0(z_1,z_2,t_1-t_2)u(z_2,t_2)W(\mathrm dz_1, \mathrm dt_1)W(\mathrm dz_2, \mathrm dt_2),
\end{align*}
which can be repeated indefinitely.
In other words, setting $f_0'= u_0$ then recursively defining 
\begin{align}
    f_n'(z_1,t_1,\ldots,z_n,t_n;z,t) = \ind{(0,t)}(t_n)&q_0(z,z_n,t-t_n) \label{eqn:fnKernelsDefn} \\
    &\cdot f_{n-1}'(z_1,t_1,\ldots,z_{n-1},t_{n-1};z_n,t_n) \notag
\end{align}
for $n \geq 1$, we obtain (formally) that
\[
    u(z,t) = \sum_{n=0}^\infty \mathcal I_n(f_n'(\cdot;z,t)),
\]
and $f_n(\cdot; z,t) = f_n'(\cdot; z, t)$.
For these functions, we will slightly abuse the notation and write $n!\|\tilde f_n(\cdot; z,t)\|_n^2$ as $\|f_n(\cdot; z,t)\|_n^2$, where by direct computation
\begin{align*}
    \|f_n(\cdot;z,t)\|_n^2 = \int_{\Delta_nt}\int_{\R_+^n} \int_{\Delta_nt}\int_{\R_+^n} \prod_{i=0}^{n-1}& q_0(x_{i+1},x_i,r_{i+1}-r_i)q_0(y_{i+1},y_i,s_{i+1}-s_i)\\
    &\cdot f(x_i-y_i)\gamma(r_i-s_i) \mathrm d\mathbf x\mathrm d\mathbf y\mathrm d\mathbf r\mathrm d\mathbf s,
\end{align*}
where $x_n = y_n = z$ and $r_n = s_n = t$.

Of course, it remains to rigorously show that (\ref{eqn:solnWienerChaosExpansion}) with the derived expression in (\ref{eqn:fnKernelsDefn}) for $f_n$ is in fact the unique mild solution to (\ref{eqn:SKE1}).
We build towards the proof by first studying the case of space-time white noise via a treatment similar to \cite{balanHAM, nualart}.
Henceforth, without loss of generality we may assume constant initial condition.
Because $u_0 \in C_b(\R_+)$, all subsequent results may be recovered by first applying $u_0(z) \leq \|u_0\|_u$.

\subsubsection{Space-Time White Noise}
\label{ssec:euSTWN}

In the space-time white noise case, we are interested in (\ref{eqn:SKE1}) when the centered Gaussian noise $\{\dot W(z, t) \mid z, t > 0\}$ formally has covariance
\[
    \E[W(z_1,t_1)W(z_2,t_2)] = \delta_0(z_1-z_2)\delta_0(t_1-t_2),
\]
where $\delta_0$ denotes the Dirac delta.
In other words, $\inner{\cdot}{\cdot}_H$ coincides with the standard $L^2$ inner product.

Given the kernels $f_n$ defined above, we first aim to prove that for every $z, t > 0$, $f_n(\cdot; z, t) \in H^{\otimes n}$.
The proof for the base case $n = 0$ is straightforward, but we will need the following technical lemma on $u_0(z,t)$.
\begin{lemma}
    \label{lem:u0Int}
    Let $z,t > 0$, then the following equality holds
    \begin{equation}
        \label{eqn:u0}
        u_0(z,t) = \int_0^\infty q_0(z,w,t)\mathrm dw = 1 - e^{-\frac{z}{t}},
    \end{equation}
    and we have the bounds
    \begin{equation}
        \label{eqn:u0Bds}
        e^{-1}\frac{z}{t} \leq u_0(z,t) \leq \frac{z}{t} \wedge 1,
    \end{equation}
    where the lower bound holds for $z < t$.
\end{lemma}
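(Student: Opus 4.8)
The plan is to compute the integral $\int_0^\infty q_0(z,w,t)\,\mathrm dw$ directly from the explicit Bessel-function formula \eqref{eqn:q0Formula}, and then to extract the elementary two-sided bound from the resulting closed form $1 - e^{-z/t}$. First I would substitute $q_0(z,w,t) = \sqrt{z/w}\,t^{-1}e^{-(z+w)/t}I_1(2\sqrt{zw}/t)$ and perform the change of variables $w = t\rho^2/(4z)$ (or more symmetrically $u = 2\sqrt{zw}/t$, so $w = t^2u^2/(4z)$ and $\mathrm dw = t^2 u\,\mathrm du/(2z)$). This turns the integral into a one-dimensional integral of the form $\mathrm{const}\cdot e^{-z/t}\int_0^\infty u\, e^{-tu^2/(4z)} I_1(u)\,\mathrm du$ up to bookkeeping of the $\sqrt{z/w}$ factor, and the key analytic input is the classical Laplace-transform/Weber–Schafheitlin type identity $\int_0^\infty u\, e^{-a u^2} I_1(b u)\,\mathrm du = \frac{b}{2a}\cdot\frac{1}{2a}e^{b^2/(4a)}$ — that is, $\int_0^\infty u e^{-pu^2} I_1(cu)\,\mathrm du = \frac{c}{(2p)^2}e^{c^2/(4p)}$ — which is a standard Bessel integral and should be quoted from the appendix $\mathsection\ref{sec:appendix}$ or a reference. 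Matching $a = t/(4z)$ and the appropriate $b$ should collapse everything to $1 - e^{-z/t}$; a cleaner route, if available in the appendix, is to integrate the known series for $I_1$ term by term against $e^{-w/t}$ and recognize the result.

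An alternative, and perhaps conceptually cleaner, derivation avoids special-function identities altogether: $u_0(z,t) := \int_0^\infty q_0(z,w,t)\,\mathrm dw$ is by \eqref{eqn:detKimuraEqnSolution} the solution of the deterministic Kimura equation $\partial_t u_0 = z\partial_z^2 u_0$ with initial data $u_0(z,0)\equiv 1$ and Dirichlet condition $u_0(0,t)=0$. One then simply verifies that $v(z,t) := 1 - e^{-z/t}$ solves this problem: $v(0,t) = 0$, $v(z,0^+) = 1$ for $z>0$, and a direct differentiation gives $\partial_t v = -\frac{z}{t^2}e^{-z/t}$ while $z\partial_z^2 v = z\cdot\bigl(-\frac{1}{t^2}e^{-z/t}\bigr) = -\frac{z}{t^2}e^{-z/t}$, so $\partial_t v = z\partial_z^2 v$. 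Uniqueness of the bounded solution (guaranteed by the well-posedness theory cited in $\mathsection\ref{ssec:bgKim}$, or by a maximum-principle argument for this degenerate operator) then forces $u_0(z,t) = v(z,t)$. I would likely present this verification argument as the main proof and mention the direct integral computation as a remark, since it is shorter and self-contained given the background already assembled.

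For the bounds \eqref{eqn:u0Bds}: writing $x = z/t > 0$, the claim is $e^{-1}x \le 1 - e^{-x} \le x\wedge 1$ for $0 < x < 1$ on the lower side. The upper bound $1 - e^{-x} \le x$ is the elementary convexity inequality $e^{-x}\ge 1-x$, valid for all $x\in\R$, and $1 - e^{-x} \le 1$ is trivial, giving $1 - e^{-x}\le x\wedge 1$ for all $x>0$. For the lower bound, restrict to $x\in(0,1)$: the function $\phi(x) := (1-e^{-x})/x$ is decreasing on $(0,\infty)$ (its derivative has the sign of $e^{-x}(1+x) - 1 \le 0$), so on $(0,1)$ one has $\phi(x)\ge \phi(1) = 1 - e^{-1} \ge e^{-1}$, i.e. $1 - e^{-x}\ge e^{-1}x$; equivalently one can just check $1 - e^{-x} \ge e^{-1}x$ directly since $e^{-1}\le e^{-x}$-weighted… but the monotonicity of $\phi$ argument is cleanest. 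I expect no real obstacle here; the only thing requiring a moment's care is confirming that $\phi$ is indeed monotone decreasing, which follows from $e^{-x}(1+x)\le 1$ (again a consequence of $e^x\ge 1+x$ applied with the roles arranged appropriately). The main obstacle in the whole lemma is really just the bookkeeping in the Bessel integral if one goes the computational route; the PDE-verification route sidesteps it entirely, so that is the approach I would commit to.
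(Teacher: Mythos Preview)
Your proposal is correct, but your preferred route differs from the paper's. The paper computes the integral directly: it writes $I_1$ in terms of $J_1$ via \eqref{eqn:BesselIDefn}, applies the Weber--Schafheitlin identity in Proposition~\ref{prop:watsonu0Int} (with $\nu=\mu=1$) to obtain a ${}_1F_1$ value, and simplifies ${}_1F_1[1;2\mid z/t]$ in closed form to reach $1-e^{-z/t}$. Your PDE-verification argument---checking that $v(z,t)=1-e^{-z/t}$ satisfies $\partial_t v = z\partial_z^2 v$ with the correct initial and Dirichlet data, then invoking uniqueness---is cleaner and avoids special-function machinery entirely, at the cost of relying on a well-posedness statement for the degenerate problem that the paper cites but does not state explicitly as a lemma; if you go this way, be sure to point precisely to where bounded-solution uniqueness is established (e.g.\ the references in $\mathsection\ref{ssec:bgKim}$). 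Your computational sketch is in the same spirit as the paper's proof, though the bookkeeping you flag as loose is exactly where the paper leans on the appendix formula rather than an ad~hoc Laplace-transform identity. For the bounds \eqref{eqn:u0Bds}, your monotonicity argument for $\phi(x)=(1-e^{-x})/x$ is more explicit than the paper's one-line ``first-order Taylor expansion'' but amounts to the same thing.
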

\begin{proof}
    Applying $(\ref{eqn:watsonu0Int})$ with $\nu=0$ and $a=1$ and (\ref{eqn:BesselIDefn}), respectively, we have
    \begin{align*}
        \int_0^\infty q_0(z,w,t)\mathrm dw &= \int_0^\infty \sqrt{\frac{z}{w}} \frac{e^{-\frac{z+w}{t}}}{t} I_1\p{2\frac{\sqrt{zw}}{t}}\mathrm dw\\
        &= 2\frac{\sqrt ze^{-\frac{z}{t}}}{t}i^{-1}\int_0^\infty e^{-\frac{u^2}{t}}J_1\p{2\frac{\sqrt{z}i}{t}u}\mathrm du\\
        &= 2\frac{\sqrt ze^{-\frac{z}{t}}}{t}i^{-1} \cdot \frac{\sqrt z i}{2} 
        {}_1F_1
        \left[
        \hspace{-1mm}
        \begin{array}{cc}
            1  \\
            2
        \end{array} \hspace{-2mm}
        \mathrel{\Big|} \frac{z}{t}
        \right]\\
        &= \frac{ze^{-\frac{z}{t}}}{t} \frac{(e^{\frac{z}{t}}-1)t}{z}\\
        &= 1 - e^{-\frac{z}{t}}.
    \end{align*}
    The bounds follow by taking the first-order Taylor expansion.
\end{proof}

For $n \geq 1$, due to the recursive definition of $f_n(\cdot;z,t)$, in order to show that $f_n(\cdot ; z, t) \in H^{\otimes n}$, it suffices to verify that $\|f_n(\cdot ; z, t)\|_{n}^2 < \infty$ which we can in turn recursively compute:
\begin{equation}
    \label{eqn:recursiveNormSTWN}
    \|f_n(\cdot; z, t)\|_{n}^2 = \int_0^t \int_0^\infty q_0^2(z,w,t-\tau)\|f_{n-1}(\cdot; w, \tau)\|_{n-1}^2\mathrm dw\mathrm d\tau.
\end{equation}
Using (\ref{eqn:u0Bds}), for $n = 1$ we observe
\[
  \|f_1(\cdot; z, t)\|_{1}^2 = \int_0^t \int_0^\infty q_0^2(z,w,t-\tau)u_0^2(w,\tau)\mathrm dw\mathrm d\tau \leq \int_0^t \int_0^\infty q_0^2(z,w,\tau)\mathrm dw\mathrm d\tau,
\]
which can once again be direct computed and bounded.
\begin{lemma}
    \label{lem:fsSqInt}
    Define $U : \R_+ \to \R_+$ by
    \begin{equation}
        \label{eqn:fsSqIntDefn}
        U(x) := e^{-x}I_0(x) - \frac{1}{2}e^{-2x},
    \end{equation}
    then for $z, t > 0$,
    \begin{equation}
        \label{eqn:fsSqIntCalc}
        \int_0^t \int_0^\infty q_0^2(z,w,\tau)\mathrm dw \mathrm d\tau = U\p{\frac{z}{t}},
    \end{equation}
    and moreover $U(z/t) \leq 1/2$.
\end{lemma}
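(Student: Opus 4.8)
The plan is to compute the double integral explicitly using the formula \eqref{eqn:q0Formula} for $q_0$, exploiting scaling to reduce to the case $t=1$, and then recognizing the resulting $w$-integral as a known Bessel integral. First I would substitute \eqref{eqn:q0Formula} to write
\[
    \int_0^t\int_0^\infty q_0^2(z,w,\tau)\,\mathrm dw\,\mathrm d\tau
    = \int_0^t \int_0^\infty \frac{z}{w}\,\frac{e^{-\frac{2(z+w)}{\tau}}}{\tau^2}\,I_1\!\p{\frac{2\sqrt{zw}}{\tau}}^2\,\mathrm dw\,\mathrm d\tau,
\]
and perform the inner integral over $w$ first. For fixed $\tau$, the substitution $w = \tau v$ (or directly using a standard formula such as the Weber–Schafheitlin–type integral $\int_0^\infty \frac{1}{w}e^{-aw}I_1(b\sqrt w)^2\,\mathrm dw$ appearing in the appendix) should yield a closed form depending only on the ratio $z/\tau$. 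I expect the inner integral to evaluate to something of the form $\frac{\tau}{z}\big(I_0(2z/\tau) - 1\big)e^{-2z/\tau}$ or a comparable expression — the key point being that after the $w$-integration one is left with a clean function of $x := z/\tau$.

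Next I would perform the $\tau$-integral from $0$ to $t$. Writing the integrand as a function $\Phi(z/\tau)/\tau$ or similar, the substitution $x = z/\tau$, $\mathrm d\tau = -z\,x^{-2}\,\mathrm dx$ turns $\int_0^t(\cdot)\,\mathrm d\tau$ into an integral $\int_{z/t}^\infty(\cdot)\,\mathrm dx$ of an explicit function. The antiderivative should be recognizable: since we expect the final answer to be $U(z/t) = e^{-z/t}I_0(z/t) - \tfrac12 e^{-2z/t}$, I would verify by differentiating $U$ that $U'(x) = e^{-x}\big(I_0'(x) - I_0(x)\big) + e^{-2x} = e^{-x}\big(I_1(x) - I_0(x)\big) + e^{-2x}$ (using $I_0' = I_1$), and match this against the integrand produced by the $\tau$-substitution, together with the boundary value $U(x)\to 0$ as $x\to\infty$ (which follows from the asymptotics $e^{-x}I_0(x)\sim (2\pi x)^{-1/2}$ and $e^{-2x}\to 0$). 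This pins down \eqref{eqn:fsSqIntCalc}.

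Finally, for the bound $U(x) \leq 1/2$: I would show $U$ is decreasing on $(0,\infty)$ and that $U(0^+) = 1 - \tfrac12 = \tfrac12$. The value at $0$ is immediate since $I_0(0) = 1$. For monotonicity, it suffices to check $U'(x) \le 0$, i.e. $e^{-x}\big(I_1(x) - I_0(x)\big) + e^{-2x} \le 0$, equivalently $I_0(x) - I_1(x) \ge e^{-x}$ for all $x \ge 0$. This last inequality is a standard Bessel estimate (it can be proved from the series expansions, comparing coefficients of $I_0 - I_1$ against those of $e^{-x}$, or via the integral representation $I_0(x) - I_1(x) = \frac{1}{\pi}\int_0^\pi (1-\cos\theta)e^{x\cos\theta}\,\mathrm d\theta \ge \frac{1}{\pi}\int_0^\pi (1-\cos\theta)\,\mathrm d\theta \cdot$ something, then bounding below), and I would relegate it to the appendix alongside the other Bessel formulas. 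The main obstacle I anticipate is identifying the correct closed form for the inner $w$-integral and matching constants/signs through the two successive substitutions — the bookkeeping with the factors of $\tau$ and the $1/w$ weight is where errors are most likely to creep in, so I would double-check the final formula by evaluating both sides at a convenient point (e.g. examining the $z/t \to 0$ and $z/t \to \infty$ limits).
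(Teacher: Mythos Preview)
Your proposal is correct and follows essentially the same route as the paper: compute the inner $w$-integral via the Weber-type formula in the appendix (yielding an explicit function of $z/\tau$), then integrate in $\tau$ --- the paper does this by integration by parts using $I_0' = I_1$, which is equivalent to your plan of differentiating the candidate $U$ and matching. Your treatment of the bound $U\le 1/2$ via monotonicity and $I_0(x)-I_1(x)\ge e^{-x}$ (which follows cleanly from the integral representation $I_0-I_1 = \tfrac{1}{\pi}\int_0^\pi(1-\cos\theta)e^{x\cos\theta}\,\mathrm d\theta \ge \tfrac{e^{-x}}{\pi}\int_0^\pi(1-\cos\theta)\,\mathrm d\theta = e^{-x}$) is in fact more explicit than the paper's, which simply asserts that $\|U\|_u = 1/2$ ``follows from straightforward calculations.''
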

\begin{proof}
    Per the definition of $q_0$ and (\ref{eqn:BesselIDefn}), we have
    \begin{align*}
        \int_0^\infty \hspace{-2mm} q_0^2(z, w, \tau)\mathrm dw &= \int_0^\infty \frac{zw^{-1}}{\tau^2} e^{-\frac{2(z+w)}{\tau}}I_1^2\p{2\frac{\sqrt{zw}}{\tau}} \mathrm dw = \frac{ze^{-\frac{2z}{\tau}}}{\tau^2i^{2}} \int_0^\infty \hspace{-2mm} w^{-1}e^{-\frac{2w}{\tau}}J_1^2\p{\frac{2i\sqrt{z w}}{\tau}}\mathrm dw.
    \end{align*}
    Next, we apply Proposition \ref{prop:WeberIntegral} to obtain
    \[
        i^{-2}\int_0^\infty w^{-1}e^{-\frac{2w}{\tau}}J_1^2\p{\frac{2i\sqrt{z w}}{\tau}}\mathrm dw = e^{\frac{z}{\tau}}\p{I_0\p{\frac{z}{\tau}} - I_1\p{\frac{z}{\tau}}} - 1,
    \]
    and hence we get
    \begin{equation}
        \label{eqn:fsSqInt1}
        \int_0^t\int_0^\infty q_0^2(z,w,\tau)\mathrm dw = \int_0^t \frac{ze^{-\frac{2z}{\tau}}}{\tau^2}\p{ e^{\frac{z}{\tau}}\p{I_0\p{\frac{z}{\tau}} - I_1\p{\frac{z}{\tau}}} - 1}\mathrm d\tau.
    \end{equation}
    Via integration by parts, 
    \begin{align*}
        z\int_0^t \frac{e^{-\frac{z}{\tau}}}{\tau^2}\besselI{1}{\frac{z}{\tau}}\mathrm dr &= -\int_0^t e^{-\frac{z}{\tau}}\dv{}{\tau}\b{\besselI{0}{\frac{z}{\tau}}}\mathrm d\tau\\
        &= -e^{-\frac{z}{\tau}}\besselI{0}{\frac{z}{\tau}}\Big|_0^t + z\int_0^t \frac{e^{-\frac{z}{\tau}}}{\tau^2}\besselI{0}{\frac{z}{\tau}}\mathrm d\tau\\
        &= -e^{-\frac{z}{t}}\besselI{0}{\frac{z}{t}} + z\int_0^t \frac{e^{-\frac{z}{\tau}}}{\tau^2}\besselI{0}{\frac{z}{\tau}}\mathrm d\tau,
    \end{align*}
    and substituting this last expression into (\ref{eqn:fsSqInt1}), (\ref{eqn:fsSqIntCalc}) directly follows. 
    Finally, $\|U\|_u = 1/2$ follows from straightforward calculations.
\end{proof}

Once again, we have a uniform bound for the norm.
This reveals the main pillar of our strategy - using induction, and Lemmas \ref{lem:u0Int} and \ref{lem:fsSqInt} to conclude that
\begin{equation}
    \label{eqn:unUnifL2BdSTWN}
    \|f_n(\cdot; z, t)\|_{n}^2 \leq \frac{1}{2^n}.
\end{equation}

Indeed, the case $n = 0$ is confirmed by Lemma \ref{lem:u0Int}.
Next, assuming (\ref{eqn:unUnifL2BdSTWN}) holds for $n-1$ for some $n \geq 1$, then by (\ref{eqn:recursiveNormSTWN}),
\begin{align}
    \|f_n(\cdot; z, t)\|_{n}^2 &= \int_0^t \int_0^\infty q_0^2(z,w,t-\tau)\|f_{n-1}(\cdot; w, \tau)\|_{n}^2\mathrm dw\mathrm d\tau, \notag \\
    &\leq 2^{-n+1}\int_0^t \int_0^\infty q_0^2(z,w,t-\tau)\mathrm dw\mathrm d\tau = \frac{1}{2^n}. \label{eqn:unUnifL2BdSTWNProof}
\end{align}
Hence it directly follows that $f_n(\cdot ; z, t) \in H^{\otimes n}$ and we may set $u_n(z,t) := \mathcal I_n(f_n(\cdot; z, t))$ for notational simplicity.
We are now equipped to present the main result of this section.

\begin{theorem}
    \label{thm:euSTWN}
    When $W$ is the space-time white noise, $u = \set{u(z, t) \mid z, t \geq 0}$ as in (\ref{eqn:solnWienerChaosExpansion}) is well-defined as a convergent series in $L^2(\Omega)$.
    Moreover, 
    \begin{equation}
        \label{eqn:unifL2BdSTWN}
        \sup_{z, t \geq 0} \E[u^2(z,t)] \leq 2,
    \end{equation}
    and $u$ is the unique mild solution to (\ref{eqn:SKE1}).
\end{theorem}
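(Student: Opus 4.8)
The plan is to establish the theorem in three stages: convergence of the series, the $L^2$-bound, and existence-and-uniqueness as a mild solution. For the convergence, I would first invoke the orthogonality relation (\ref{eqn:skorohodIntIsometry}): since $u_n(z,t) = \mathcal I_n(f_n(\cdot; z, t))$ with $f_n(\cdot; z, t) \in H^{\otimes n}$ (already verified above via the recursion (\ref{eqn:recursiveNormSTWN}) and the uniform bound (\ref{eqn:unUnifL2BdSTWN})), the summands $u_n(z,t)$ are pairwise orthogonal in $L^2(\Omega)$, so $\sum_{n=0}^\infty u_n(z,t)$ converges in $L^2(\Omega)$ if and only if $\sum_{n=0}^\infty \|f_n(\cdot; z, t)\|_n^2 < \infty$. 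By (\ref{eqn:unUnifL2BdSTWN}) this sum is dominated by $\sum_{n=0}^\infty 2^{-n} = 2$, which gives convergence and simultaneously yields the bound
\[
    \E[u^2(z,t)] = \sum_{n=0}^\infty \|f_n(\cdot; z, t)\|_n^2 \leq 2,
\]
uniformly in $z, t \geq 0$, proving (\ref{eqn:unifL2BdSTWN}). (Here I use the bookkeeping convention $n!\|\tilde f_n\|_n^2 = \|f_n\|_n^2$ fixed earlier.)

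Next, to show $u$ is a mild solution, I need to verify the two conditions in Definition \ref{defn:mildSolution}. Condition (a) — the uniform $L^2$-bound — is exactly (\ref{eqn:unifL2BdSTWN}); joint measurability of a modification I would obtain from the fact that each $(z,t)\mapsto f_n(\cdot; z, t)$ is measurable (being built by the explicit recursion (\ref{eqn:fnKernelsDefn}) from the jointly continuous kernel $q_0$) together with the $L^2(\Omega)$-continuity of $\mathcal I_n$, so that $(z,t)\mapsto u(z,t)$ is measurable into $L^2(\Omega)$ and hence admits a jointly measurable modification. For condition (b), I would apply Proposition \ref{prop:swapSeriesAndDiv}: the process $v(w,\tau) := \ind{(0,t)}(\tau)q_0(z,w,t-\tau)u(w,\tau)$ has Wiener chaos expansion with kernels $\ind{(0,t)}(t_n)q_0(z,w,t-t_n)f_{n}(\cdot; w, t_n)$ — wait, more precisely the $n$th kernel of $v$, as a function of $2(n+1)$ variables, is exactly $f_{n+1}(\cdot; z, t)$ by the recursion (\ref{eqn:fnKernelsDefn}). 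Thus by Proposition \ref{prop:swapSeriesAndDiv}, $v \in \text{Dom}(\delta)$ and $\delta(v) = \sum_{n=0}^\infty \mathcal I_{n+1}(f_{n+1}(\cdot; z, t)) = \sum_{n=1}^\infty u_n(z,t)$, which converges in $L^2(\Omega)$ by the bound already established. Adding $u_0(z,t) = \int_0^\infty q_0(z,w,t)u_0(w)\,\mathrm dw$ gives precisely (\ref{eqn:SKE1IntForm}).

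Finally, for uniqueness up to modification, I would argue that any mild solution $\tilde u$ has a Wiener chaos expansion $\tilde u(z,t) = \sum_n \mathcal I_n(g_n(\cdot; z, t))$, and then show by induction on $n$ that $g_n = f_n$. The base case $g_0 = \E[\tilde u(z,t)] = u_0(z,t)$ follows since the $n=0$ term of the integral equation (\ref{eqn:SKE1IntForm}) is deterministic and the Skorohod integral has mean zero. For the inductive step, I would match the $n$th chaos component on both sides of (\ref{eqn:SKE1IntForm}): the $n$th component of the stochastic integral term is $\mathcal I_n\bigl(\ind{(0,t)}(t_n)q_0(z, z_n, t - t_n) g_{n-1}(\cdot; z_n, t_n)\bigr)$, which by the inductive hypothesis $g_{n-1} = f_{n-1}$ and the recursion (\ref{eqn:fnKernelsDefn}) equals $\mathcal I_n(f_n(\cdot; z, t)) = u_n(z,t)$; hence $g_n = f_n$ and $\tilde u(z,t) = u(z,t)$ in $L^2(\Omega)$ for every $z, t > 0$, giving the modification claim.

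The main obstacle I anticipate is the careful identification, in the uniqueness argument, of the $n$th Wiener chaos component of the Skorohod integral $\int_0^t\int_0^\infty q_0(z,w,t-\tau)\tilde u(w,\tau)\,W(\mathrm dw,\mathrm d\tau)$ with $\mathcal I_n$ applied to the correct kernel — this requires knowing that the Skorohod integral shifts chaos order by exactly one and acts on kernels by the prescribed "prepend a factor of $q_0$" operation, which is the content of Proposition \ref{prop:swapSeriesAndDiv} but must be applied to the unknown solution $\tilde u$ rather than to the explicitly constructed $u$; one must also confirm a priori that $\tilde u$'s kernels are square-integrable so that all the chaos norms in play are finite, which again follows by the same recursion and the uniform bounds of Lemmas \ref{lem:u0Int} and \ref{lem:fsSqInt}.
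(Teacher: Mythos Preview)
Your proposal is correct and follows essentially the same approach as the paper: summability of the geometric bound $\|f_n\|_n^2 \leq 2^{-n}$ for convergence and the uniform $L^2$-estimate, Proposition~\ref{prop:swapSeriesAndDiv} with the kernel identification $g_n^{(z,t)} = f_{n+1}(\cdot;z,t)$ for condition~(b), and an induction matching chaos components for uniqueness. The only point where the paper is slightly more careful is condition~(a): rather than inferring a jointly measurable modification from mere measurability of $(z,t)\mapsto u(z,t)$ into $L^2(\Omega)$, the paper establishes $L^2$-\emph{continuity} of this map (via dominated convergence applied to $\E[|u_n(z_1,t_1)-u_n(z_2,t_2)|^2]$) and then invokes a standard result to obtain the modification.
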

\begin{proof}
    The convergence of the series in (\ref{eqn:solnWienerChaosExpansion}) to $u(z,t)$ in $L^2(\Omega)$ and (\ref{eqn:unifL2BdSTWN}) follow by the summability of the bounds (\ref{eqn:unUnifL2BdSTWN}) over $n \geq 0$.
    
    To see that $u$ has a jointly measurable modification, we first show that $(z,t) \mapsto u(z,t)$ is continuous in $L^2(\Omega)$.
    To this end, we have
    \begin{align*}
        &\E[|u_n(z_1,t_1) - u_n(z_2,t_2)|^2] = \\
        &\int_0^{t_1 \wedge t_2} \int_0^\infty \abs{q_0(z_1,w,t_1-\tau) - q_0(z_2,w,t_2-\tau)}^2\E[u_{n-1}^2(w,\tau)]\mathrm dw \mathrm d\tau\\
        &\leq 2^{-n+1}\int_0^{t_1 \wedge t_2} \int_0^\infty \abs{q_0(z_1,w,t_1-\tau) - q_0(z_2,w,t_2-\tau)}^2\mathrm dw \mathrm d\tau,
    \end{align*}
    where the last bound vanishes as $(z_1,t_1) \to (z_2,t_2)$ by the dominated convergence theorem and continuity of the fundamental solution $q_0$.
    By the continuity of $u_n$ in $L^2(\Omega)$ and (\ref{eqn:unifL2BdSTWN}), it directly follows that $u$ is continuous in $L^2(\Omega)$, and in turn by Theorem 30 in Chapter IV of \cite{DellacherieMaisonneuveMeyer1992}, $u$ possesses a jointly measurable modification.
    We denote this modification again by $u$.
    We have verified condition $(\textbf{a})$ in Definition \ref{defn:mildSolution}.
    
    Next, we check condition $(\textbf{b})$.
    For $z, w, t - \tau, \tau > 0$, let 
    \begin{equation}
        \label{eqn:solnIntegrand}
        u^{(z,t)}(w,\tau) := \ind{(0,t)}(\tau)q_0(z,w,t-\tau)u(w,\tau).
    \end{equation}
    By measurability of $u$, it is immediate that $u^{(z,t)}(w,\tau)$ is adapted to $W$.
    Moreover by (\ref{eqn:unifL2BdSTWN}), $\E[(u^{(z,t)}(w,\tau))^2] < \infty$ and so $u^{(z,t)}(w, \tau)$ admits the Wiener chaos expansion
    \[
        u^{(z,t)}(w, \tau) = \sum_{n=0}^\infty \mathcal I_n\p{g_n^{(z,t)}(\cdot; w, \tau)},
    \]
    where $g_n^{(z,t)}(w, \tau) = \ind{(0,t)}(\tau)q_0(z,w,t-\tau)f_n(\cdot; w, \tau) = f_{n+1}(\cdot, w, \tau; z, t)$.
    Clearly, $g_n^{(z,t)} \in H^{\otimes(n+1)}$ by the corresponding properties for $u$ and $f_{n+1}$, respectively, and 
    \[
        \delta\p{u^{(z,t)}} = \sum_{n=0}^\infty \mathcal I_{n+1}\p{g_n^{(z,t)}} = \sum_{n=0}^\infty \mathcal I_{n+1}\p{f_{n+1}(\cdot; z, t)} = \sum_{n=0}^\infty u_n(z,t)
    \]
    converges in $L^2(\Omega)$.
    
    Therefore $u^{(z,t)}$ is indeed Skorohod integrable and by Proposition \ref{prop:swapSeriesAndDiv} (1.3.7 of \cite{nualartBook}),
    \begin{align*}
        &\int_0^t\int_0^\infty q_0(z,w,t-\tau)u(w,\tau)W(\mathrm dw,\mathrm d\tau) \\
        &= \delta\p{u^{(z,t)}} = \sum_{n=0}^\infty \mathcal I_{n+1}\p{f_{n+1}(\cdot; z, t)} = u(z, t) - u_0(z, t).
    \end{align*}
    Thus, $u$ also satisfies condition $(\textbf{b})$ and is in fact a mild solution of (\ref{eqn:SKE1}).

    Next, we prove that $u$ is the unique mild solution to (\ref{eqn:SKE1}).
    Let $v = \set{v(z,t) : z,t \geq 0}$ be another solution to (\ref{eqn:SKE1}) with Wiener chaos expansion
    \[
        v(z,t) = \sum_{n=0}^\infty \mathcal I_n(k_n(\cdot; z, t))
    \]
    for some symmetric non-negative functions $k_n(\cdot; z, t) \in H^{\otimes n}$ and $z, t > 0$ fixed.
    We will show that $k_n(\cdot; z, t) = \tilde f_n(\cdot, z, t)$ for every $n \geq 1$.

    Let $v^{(z,t)}(w, \tau)$ be defined similarly to (\ref{eqn:solnIntegrand}).
    Then by definition of a solution, the process $v^{(z,t)} := \set{v^{(z,t)}(w,\tau) | w,\tau > 0}$ is Skorohod integrable and admits the Wiener chaos expansion
    \[
        v^{(z,t)}(w,\tau) = \sum_{n=0}^\infty \mathcal I_n \p{h_n^{(z,t)}(\cdot; w, \tau)},
    \]
    where $h_n^{(z,t)}(\cdot; w, \tau) = \ind{(0,t)}(\tau)q_0(z,w,t-\tau)k_n(\cdot; w, \tau)$.
    By Proposition \ref{prop:swapSeriesAndDiv},
    \[
        \sum_{n=0}^\infty \mathcal I_{n+1}\p{k_{n+1}(\cdot;z,t)} = v(z,t) - \E[v(z,t)] = \delta\p{v^{(z,t)}} =  \sum_{n=0}^\infty \mathcal I_{n+1}\p{\widetilde{h_n^{(z,t)}}},
    \]
    from which it follows that $k_{n+1}(\cdot, z, t) = \widetilde{h_n^{(z,t)}}$ by the uniqueness of the Wiener chaos expansion with symmetric kernels.

    We now express $k_n(\cdot, z, t)$ recursively.
    For $n = 0$, we obtain
    \[
        k_1(z_1, t_1, z, t) = \widetilde{ h_0^{(z,t)} } (z_1, t_1) = h_0^{(z,t)}(z_1,t_1) = \ind{(0,t)}(t_1)q_0(z,z_1,t-t_1)\p{1-e^{-\frac{z_1}{t_1}}}.
    \]
    For $n = 1$, we obtain
    \begin{align*}
        &k_2(z_1, t_1, z_2, t_2, z, t) = \widetilde{ h_1^{(z,t)} }(z_1,t_1,z_2,t_2) = \frac{1}{2}\b{ h_1^{(z,t)}(z_1,t_1,z_2,t_2) + h_1^{(z,t)}(z_2,t_2,z_1,t_1) } = \\
        &\frac{1}{2}\b{ \ind{(0,t)}(t_2)q_0(z,z_2,t-t_2)k_1(z_1,t_1,z_2,t_2) + \ind{(0,t)}(t_1)q_0(z,z_1,t-t_1)k_1(z_2,t_2,z_1,t_1) }.
    \end{align*}
    Using the expression for $k_1$, we conclude
    \begin{align*}
        k_2(z_1,t_1,z_2,t_2,z,t) = &\frac{1}{2} \left[ \ind{(0,t)}(t_2) q_0(z,z_2,t-t_2) \ind{(0,t_2)}(t_1) q_0(z_2,z_1,t_2-t_1)\p{1-e^{-\frac{z_1}{t_1}}} \right. \\
        &+ \left. \ind{(0,t)}(t_1) q_0(z,z_1,t-t_1) \ind{(0,t_1)}(t_2) q_0(z_1,z_2,t_1-t_2)\p{1-e^{-\frac{z_2}{t_2}}}\right],
    \end{align*}
    that is, $k_2(\cdot, z,t) = \tilde f_2(\cdot, z, t)$.
    Iterating this procedure, we infer that $k_n(\cdot, z, t) = \tilde f_n(\cdot, z, t)$ for every $n \geq 1$.
    Hence,
    \[
        v(z,t) = \sum_{n=0}^\infty \mathcal I_n(k_n(\cdot, z, t)) = \sum_{n=0}^\infty \mathcal I_n(\tilde f_n(\cdot, z, t)) = \sum_{n=0}^\infty \mathcal I_n( f_n(\cdot, z, t))= u(z,t).
    \]
    That is, the solution $u$ given by (\ref{eqn:solnWienerChaosExpansion}) is the unique mild solution to (\ref{eqn:SKE1}), up to a modification.
\end{proof}

\subsubsection{Colored Noise}
\label{ssec:euColored}

We continue to the case of colored noise.
That is, $W = \{\dot W(z,t) \mid z, t > 0\}$ is a centered Gaussian process whose covariance is formally given by
\[
    \E\b{\dot W(z_1,t_1)\dot W(z_2, t_2)} = f(z_1-z_2)\gamma(t_1-t_2),
\]
for $z_1, z_2, t_1, t_2 > 0$, where $f, \gamma : \R \to \R_+$ are non-negative definite, symmetric, and locally integrable kernels.

As in the previous section, we seek to show that the process $u$ defined in (\ref{eqn:solnWienerChaosExpansion}) is the unique mild solution to (\ref{eqn:SKE1}) when $W$ is a colored noise.
To this end, we show that the series in (\ref{eqn:solnWienerChaosExpansion}) converges in $L^2(\Omega)$ for all $z, t > 0$ and we recursively compute $\inner{f_{n}(\cdot;z_1,t_1)}{f_{n}(\cdot;z_2,t_2)}_n$: 
\begin{align}
    \label{eqn:recursiveNormGen} &\inner{f_{n}(\cdot;z_1,t_1)}{f_{n}(\cdot;z_2,t_2)}_n = \int_{[0,t_1] \times [0,t_2]} \hspace{-14mm} \gamma(r-s)\int_{\R_+^2} \hspace{-3mm} f(x-y) \\
    & \cdot q_0(z_1,x,t_1-r)q_0(z_2,y,t_2-s) \inner{f_{n-1}(\cdot;x,r)}{f_{n-1}(\cdot;y,s)}_{n-1} \mathrm dx\mathrm dy\mathrm dr\mathrm ds. \notag 
\end{align}
for $z_1, z_2, t_1, t_2 > 0$ and $n \geq 1$.

However, here the main challenge arises, unlike in (\ref{eqn:unUnifL2BdSTWNProof}), after we apply Lemma \ref{lem:u0Int} for $n = 1$, we arrive at
\[
    \inner{f_{1}(\cdot;z_1,t_1)}{f_{1}(\cdot;z_2,t_2)}_1 \leq \int_{[0,t_1] \times [0,t_2]} \hspace{-12mm} \gamma(r-s)\int_{\R_+^2} f(x-y)q_0(z_1,x,t_1-r)q_0(z_2,y,t_2-s)\mathrm dx\mathrm dy\mathrm dr\mathrm ds,
\]
which cannot be directly computed as in Lemma \ref{lem:fsSqInt}.
Instead, we seek to extract $\gamma$ and $f$ from the integrals by imposing the following additional conditions:
\begin{enumerate}[label=(\textbf{\roman*})]
    \item 
        $f$ is non-increasing on $\R_+$,
    \item
        for all $\varepsilon > 0$, $\int_{-\varepsilon}^\varepsilon f(x)\mathrm dx < \infty$ and
        \[
            \lim_{\varepsilon \searrow 0} \int_{-\varepsilon}^\varepsilon f(x)\mathrm dx = 0.
        \]
\end{enumerate}

In addition, for $\varepsilon, t > 0$ we introduce the following notation
\begin{align}
    A_\varepsilon := \set{(x,y) \in \R_+^2 \mid |x-y| < \varepsilon}, \label{def:Aeps}\\
    F_\varepsilon := \int_{-\varepsilon}^\varepsilon f(x) \mathrm dx = 2 \int_0^\varepsilon f(x)\mathrm dx, \label{def:Feps}\\
    \Gamma_t := \int_{-t}^t \gamma(\tau)\mathrm d\tau = 2 \int_0^t \gamma(\tau)\mathrm d\tau, \label{def:GAMeps}.
\end{align}
Note these quantities are well-defined by local-integrability and symmetry of $f$ and $\gamma$.
We proceed to make our strategy concrete.
\begin{lemma}
    \label{lem:u1BdColored}
    Let $z_1, z_2, t_1, t_2 > 0$ and let $U$ be as in (\ref{eqn:fsSqIntDefn}). 
    Then for any $\varepsilon > 0$ we have
    \begin{equation}
        \label{eqn:u1BdColored}
        \inner{f_{1}(\cdot;z_1,t_1)}{f_{1}(\cdot;z_2,t_2)}_1 \leq \Gamma_{t_1 \vee t_2}\p{\frac{F_\varepsilon}{2} + f(\varepsilon)\sqrt{t_1t_2}}.
    \end{equation}
\end{lemma}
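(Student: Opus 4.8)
The plan is to start from the estimate on $\inner{f_{1}(\cdot;z_1,t_1)}{f_{1}(\cdot;z_2,t_2)}_1$ already recorded just above the statement, which comes from the recursion (\ref{eqn:recursiveNormGen}) at $n=1$ together with the bound $u_0 \le 1$ from Lemma~\ref{lem:u0Int}:
\[
    \inner{f_{1}(\cdot;z_1,t_1)}{f_{1}(\cdot;z_2,t_2)}_1 \le \int_0^{t_1}\int_0^{t_2}\gamma(r-s)\int_{\R_+^2} f(x-y)q_0(z_1,x,t_1-r)q_0(z_2,y,t_2-s)\,\mathrm dx\mathrm dy\mathrm dr\mathrm ds .
\]
All integrands are non-negative, so Tonelli will let me reorder integrations freely throughout. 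The core idea is to split the inner spatial integral over $A_\varepsilon = \set{|x-y|<\varepsilon}$ and its complement and estimate the two pieces separately, using monotonicity of $f$ on the complement and a decoupling inequality on $A_\varepsilon$.

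On $A_\varepsilon^c$ I would use assumption (i): since $f$ is symmetric and non-increasing on $\R_+$, $f(x-y)\le f(\varepsilon)$ there, so after pulling $f(\varepsilon)$ out and applying $\int_{\R_+} q_0(z,x,\rho)\,\mathrm dx = u_0(z,\rho)\le 1$ (Lemma~\ref{lem:u0Int}) in both spatial variables, this piece is at most $f(\varepsilon)\int_0^{t_1}\int_0^{t_2}\gamma(r-s)\,\mathrm dr\mathrm ds$. Fixing $s$ and substituting $\tau = r-s$ shows $\int_0^{t_1}\gamma(r-s)\,\mathrm dr \le \Gamma_{t_1\vee t_2}$ because the interval $[-s,\,t_1-s]$ sits inside $[-(t_1\vee t_2),\,t_1\vee t_2]$; integrating in $s$ and then repeating the argument with the roles of $t_1,t_2$ swapped gives $\int_0^{t_1}\int_0^{t_2}\gamma(r-s)\,\mathrm dr\mathrm ds \le (t_1\wedge t_2)\Gamma_{t_1\vee t_2}\le \sqrt{t_1t_2}\,\Gamma_{t_1\vee t_2}$. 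This produces exactly the $f(\varepsilon)\sqrt{t_1t_2}$ term.

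The main obstacle is the $A_\varepsilon$ piece, since — unlike Lemma~\ref{lem:fsSqInt} — we cannot compute the integral in closed form because $z_1$ and $z_2$ are coupled through $f(x-y)$. My plan is to decouple them via $q_0(z_1,x,t_1-r)q_0(z_2,y,t_2-s)\le \tfrac12 q_0(z_1,x,t_1-r)^2 + \tfrac12 q_0(z_2,y,t_2-s)^2$. For the first resulting term, integrating in $y$ over $\set{y>0 : |x-y|<\varepsilon}$ against $f(x-y)$ contributes a factor at most $F_\varepsilon$ by (\ref{def:Feps}), leaving $\tfrac{F_\varepsilon}{2}\int_0^{t_1}\int_0^{t_2}\gamma(r-s)\big(\int_{\R_+}q_0(z_1,x,t_1-r)^2\mathrm dx\big)\mathrm dr\mathrm ds$; performing the $s$-integral first bounds $\int_0^{t_2}\gamma(r-s)\,\mathrm ds\le\Gamma_{t_1\vee t_2}$ as before, and then $\int_0^{t_1}\int_{\R_+}q_0(z_1,x,t_1-r)^2\,\mathrm dx\,\mathrm dr = U(z_1/t_1)\le\tfrac12$ by Lemma~\ref{lem:fsSqInt}. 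The second term is handled symmetrically (integrating in $x$ first, then in $r$), so the $A_\varepsilon$ piece is at most $\tfrac{F_\varepsilon}{2}\big(\tfrac12\Gamma_{t_1\vee t_2} + \tfrac12\Gamma_{t_1\vee t_2}\big) = \tfrac{F_\varepsilon}{2}\Gamma_{t_1\vee t_2}$. Adding the two contributions yields (\ref{eqn:u1BdColored}). Beyond the decoupling step, the only care needed is bookkeeping: always integrating the correct time variable first so the relevant $\gamma$-strip has half-width at most $t_1\vee t_2$, and reducing to constant initial data (already done before the section) so that the factor $u_0\le 1$ is legitimate.
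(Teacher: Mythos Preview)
Your argument is correct and follows the same overall decomposition as the paper: split the spatial integral over $A_\varepsilon$ and $A_\varepsilon^c$, use $f(x-y)\le f(\varepsilon)$ and $\int q_0\,\mathrm dw = u_0\le 1$ on the complement, and reduce the $A_\varepsilon$ piece to $\int_0^{t_i}\int_{\R_+}q_0^2\,\mathrm dw\,\mathrm dr = U(z_i/t_i)\le \tfrac12$.

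The one genuine difference is how you decouple on $A_\varepsilon$. You use the elementary $ab\le\tfrac12(a^2+b^2)$, which yields the additive intermediate bound $\tfrac{F_\varepsilon}{2}\Gamma_{t_1\vee t_2}\bigl(U(z_1/t_1)+U(z_2/t_2)\bigr)$. The paper instead applies Young's convolution inequality (Proposition~\ref{prop:modifiedYoung}) twice, obtaining the multiplicative form $L_{1,0}\le \Gamma_{t_1\vee t_2}F_\varepsilon\sqrt{U(z_1/t_1)U(z_2/t_2)}$ in (\ref{eqn:u1BdNear}). After bounding $U\le\tfrac12$ both collapse to the same $\tfrac{F_\varepsilon}{2}\Gamma_{t_1\vee t_2}$, so your proof of this lemma is complete. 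However, be aware that the geometric-mean form (\ref{eqn:u1BdNear}) is what the paper carries forward: it reappears in Lemma~\ref{lem:unTreeNodeBd} and in the inductive bound (\ref{eqn:unCovBdColored}) of Theorem~\ref{thm:euColored}, where the product structure in $(z_1,t_1)$ and $(z_2,t_2)$ is exploited. Your additive variant would still close the induction (since on the diagonal $z_1=z_2$, $t_1=t_2$ the two coincide), but if you continue with your version you would need to rephrase those downstream statements accordingly.
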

\begin{proof}
    Let $\varepsilon > 0$ and define
    \begin{align}
        \label{eqn:u1BdColored1} L_{1,0} &:= \int_{[0,t_1]\times[0,t_2]}\hspace{-12mm}\gamma(r-s)\int_{A_\varepsilon} f(x-y)q_0(z_1,x,t_1-r)q_0(z_2,y,t_2-s) \mathrm dx\mathrm dy\mathrm dr\mathrm ds,\\
        \label{eqn:u1BdColored2} L_{2,0} &:= \int_{[0,t_1]\times[0,t_2]}\hspace{-12mm}\gamma(r-s)\int_{A_\varepsilon^c} f(x-y)q_0(z_1,x,t_1-r)q_0(z_2,y,t_2-s)\mathrm dx\mathrm dy\mathrm dr\mathrm ds.
    \end{align}
    Then by (\ref{eqn:u0Bds}), we'll treat the two terms (\ref{eqn:u1BdColored1}) and (\ref{eqn:u1BdColored2}) individually.
    By Proposition \ref{prop:modifiedYoung} applied twice (once for the spatial variables, once for the temporal variables), we get
    \begin{align*}
        L_{1,0} &\leq \int_{[0,t_1]\times[0,t_2]} \hspace{-12mm} \gamma(r-s) F_\varepsilon \p{\int_0^\infty q_0^2(z_1,x,t_1-r)\mathrm dx \int_0^\infty q_0^2(z_2,y,t_2-s) \mathrm dy}^{\frac{1}{2}}\mathrm dr\mathrm ds\\
        &\leq \Gamma_{t_1 \vee t_2}F_\varepsilon \p{\int_0^{t_1}\int_0^\infty q_0^2(z_1,x,t_1-r)\mathrm dx\mathrm dr \int_0^{t_2}\int_0^\infty q_0^2(z_2,y,t_2-s) \mathrm dy\mathrm ds}^{\frac{1}{2}}.
    \end{align*}
    Notice that these last integrals are precisely quantities studied in the white noise case.
    Therefore, Lemma \ref{lem:fsSqInt} immediately implies that
    \begin{equation}
        \label{eqn:u1BdNear}
        L_{1,0} \leq \Gamma_{t_1 \vee t_2}F_\varepsilon \sqrt{U\p{\frac{z_1}{t_1}}U\p{\frac{z_2}{t_2}}}.
    \end{equation}
    On the other hand, by our assumptions on $f$, we have
    \begin{align*}
        L_{2,0} &\leq f(\varepsilon)\int_{[0,t_1]\times[0,t_2]} \hspace{-12mm} \gamma(r-s)\int_{\R_+^2}q_0(z_1,x,t_1-r)q_0(z_2,y,t_2-s)\mathrm dx\mathrm dy\mathrm dr\mathrm ds\\
        &= f(\varepsilon)\int_{[0,t_1]\times[0,t_2]}\hspace{-12mm} \gamma(r-s) \int_0^\infty q_0(z_1,x,t_1-r)\mathrm dx \int_0^\infty q_0(z_2,y,t_2-s)\mathrm dy \mathrm dr\mathrm ds\\
        &= f(\varepsilon)\int_{[0,t_1]\times[0,t_2]}\hspace{-12mm} \gamma(r-s) u_0(z_1, t_1-r)u_0(z_2, t_2-s)\mathrm dr\mathrm ds.
    \end{align*}
    Again, the above integrals are also quantities studied in the white noise case.
    This time by Lemma \ref{lem:u0Int}, it follows that
    \[
         L_{2,0} \leq f(\varepsilon)\int_{[0,t_1]\times[0,t_2]}\hspace{-12mm}\gamma(r-s)\p{1-e^{-\frac{z_1}{t_1-r}}}\p{1-e^{-\frac{z_2}{t_2-s}}}\mathrm dr\mathrm ds \leq f(\varepsilon) \int_{[0,t_1]\times[0,t_2]}\hspace{-12mm}\gamma(r-s)\mathrm dr\mathrm ds.
    \]
    By applying Proposition \ref{prop:modifiedYoung} once again, 
    \begin{equation}
        \label{eqn:u1BdFar}
        L_{2,0} \leq \Gamma_{t_1\vee t_2}f(\varepsilon) \sqrt{t_1t_2}.
    \end{equation}
    Substituting the bounds (\ref{eqn:u1BdNear}) and (\ref{eqn:u1BdFar}), we achieve the stated result (\ref{eqn:u1BdColored}).
\end{proof}

As in the space-time white noise case, we obtain a bound uniform in $z_1$ and $z_2$.
Moreover, the bound in (\ref{eqn:u1BdColored}) is increasing with respect to the time variables.
Hence, it is tempting to replicate (\ref{eqn:unUnifL2BdSTWN}) and obtain a geometric bound
\begin{align*}
    &\|f_n(\cdot; z,t)\|_n^2 \\
    &= \int_{[0,t]^2} \hspace{-6mm} \gamma(r-s)\int_{\R_+^2} f(x-y)q_0(z,x,t-r)q_0(z,y,t-s)\inner{f_{n-1}(\cdot;x,r)}{f_{n-1}(\cdot;y,s)}_{n-1}\mathrm dx\mathrm dy\mathrm dr\mathrm ds,  \\
    &\leq \int_{[0,t]^2} \hspace{-6mm} \gamma(r-s)\int_{\R_+^2} f(x-y)q_0(z,x,t-r)q_0(z,y,t-s)\p{\Gamma_{r \vee s}\p{\frac{F_\varepsilon}{2} + f(\varepsilon)\sqrt{rs}}}^{n-1}\mathrm dx\mathrm dy\mathrm dr\mathrm ds, \\
    &\leq \Gamma_{t}^{n-1}\p{\frac{F_\varepsilon}{2} + f(\varepsilon)t}^{n-1}\int_{[0,t]^2} \hspace{-6mm} \gamma(r-s)\int_{\R_+^2} f(x-y)q_0(z,x,t-r)q_0(z,y,t-s)\mathrm dx\mathrm dy\mathrm dr\mathrm ds, 
\end{align*}
Thus, it follows from here that
\begin{equation}
    \label{eqn:unGeometricL2Bd} 
    \|f_n(\cdot; z,t)\|_n^2 \leq \Gamma_{t}^n\p{\frac{F_\varepsilon}{2} + f(\varepsilon)t}^{n}, 
\end{equation}
for the $L^2$-norm, but the following remark demonstrates that such an argument may potentially fail to hold for general $f$.
\begin{remark}
    \label{rem:geometricFails}
    Suppose $\gamma(z) = f(z) = |z|^{-1/2}$, then 
    \[
        \sum_{n=0}^\infty \|f_n(\cdot; z,t)\|_n^2 \leq \sum_{n=0}^\infty 4^nt^{\frac{n}{2}}\p{2\sqrt \varepsilon + \frac{t}{\sqrt \varepsilon}},
    \]
    and whose right hand side converges if and only if
    \[
        4^nt^{\frac{n}{2}}\p{2\sqrt \varepsilon + \frac{t}{\sqrt \varepsilon}} < 1.
    \]
    Yet for $t$ sufficiently large, no choice of $\varepsilon$ exists for which this condition holds.
\end{remark}

In light of Remark \ref{rem:geometricFails}, we are forced to derive tighter bounds.
As an example, let us examine the case $n = 2$.
As before, we arrive at
\begin{align*}
    &\inner{f_{2}(\cdot;z_1,t_1)}{f_{2}(\cdot;z_2,t_2)}_2 \\
    &\leq \Gamma_{t_1 \vee t_2}\int_{[0,t_1]\times[0,t_2]} \hspace{-14mm} \gamma(r-s) \p{\frac{F_\varepsilon}{2} + f(\varepsilon)\sqrt{rs}}\int_{\R_+^2} \hspace{-3.5mm} f(x-y)q_0(z,x,t-r)q_0(z,y,t-s)\mathrm dx\mathrm dy\mathrm dr\mathrm ds.
\end{align*}
Define
\begin{align}
    \label{eqn:u2BdColored1}  \raisetag{-7mm} \hspace{10mm} L_{1,0}' &:= \Gamma_{t_1 \vee t_2}\int_{[0,t_1]\times[0,t_2]} \hspace{-14mm} \gamma(r-s) \p{\frac{F_\varepsilon}{2} + f(\varepsilon)\sqrt{rs}} \int_{A_\varepsilon} \hspace{-3.5mm} f(x-y)q_0(z,x,t-r)q_0(z,y,t-s)\mathrm dx\mathrm dy\mathrm dr\mathrm ds,\\
    \label{eqn:u2BdColored2}  \raisetag{-6.5mm} \hspace{10mm} L_{2,0}' &:= \Gamma_{t_1 \vee t_2}\int_{[0,t_1]\times[0,t_2]} \hspace{-14mm} \gamma(r-s) \p{\frac{F_\varepsilon}{2} + f(\varepsilon)\sqrt{rs}}\int_{A_\varepsilon^c} \hspace{-3.5mm} f(x-y)q_0(z,x,t-r)q_0(z,y,t-s)\mathrm dx\mathrm dy\mathrm dr\mathrm ds.
\end{align}
Then by similar arguments to those in the proof of Lemma \ref{lem:u1BdColored},
we obtain
\begin{align*}
    L_{1,0}' &\leq \Gamma_{t_1 \vee t_2}\int_{[0,t_1]\times[0,t_2]} \hspace{-12mm} \gamma(r-s) \p{\frac{F_\varepsilon}{2} + f(\varepsilon)\sqrt{t_1t_2}} \int_{A_\varepsilon} f(x-y)q_0(z,x,t-r)q_0(z,y,t-s)\mathrm dx\mathrm dy\mathrm dr\mathrm ds\\
    &\leq \Gamma_{t_1 \vee t_2}^2\p{\frac{F_\varepsilon^2}{4} + \frac{F_\varepsilon}{2}f(\varepsilon)\sqrt{t_1t_2}},
\end{align*}
and
\[
    L_{2,0}' \leq \Gamma_{t_1 \vee t_2}^2\p{\frac{F_\varepsilon}{2}f(\varepsilon)\sqrt{t_1 t_2} + f(\varepsilon)^2\frac{t_1 t_2}{2}}. 
\]
Putting it all together, we have
\begin{align}
    \label{eqn:u2BdColored}  \raisetag{-7mm} \hspace{10mm}
    \inner{f_{2}(\cdot;z_1,t_1)}{f_{2}(\cdot;z_2,t_2)}_2 \leq L_{1,0}' + L_{2,0}' \leq \Gamma_{t_1 \vee t_2}^2 \p{\frac{F_\varepsilon^2}{4} + F_\varepsilon f(\varepsilon)\sqrt{t_1 t_2} + f(\varepsilon)^2\frac{t_1 t_2}{2} }.
\end{align}
Again, we obtain a bound uniform in $z_1$ and $z_2$ and increasing in $t_1$ and $t_2$, with powers of $\Gamma_{t_1 \vee t_2}, F(\varepsilon),$ and $f(\varepsilon)$ appearing throughout.
The key insight for generalizing to higher order Wiener chaos spaces is that this approach, which we have already applied twice, can be reiterated indefinitely to obtain a bound which maintains its general pattern.
As it turns out, it can be shown that this pattern possesses a tree-like structure (see Figure \ref{fig:my_label}).
\begin{figure}[ht]
    \centering
    \vspace{-12mm}
    \begin{tikzpicture}
        \tikzstyle{level 1}=[edge from parent={missing}, sibling distance=70mm]
        \tikzstyle{level 2}=[sibling distance=70mm]
        \tikzstyle{level 3}=[sibling distance=45mm]
        \tikzstyle{level 4}=[sibling distance=35mm]
            \hspace{-8mm}\node(root){}
            child{node(a){$n=0$} edge from parent[draw=none]
            child{node(b){$n=1$} edge from parent[draw=none]
            child{node(c){$n=2$} edge from parent[draw=none]
            child{node(n){} edge from parent[dashed]}}}}
            child{node(0){1} edge from parent[draw=none]
            child{node(1){$\displaystyle \frac{\Gamma_tF_\varepsilon}{2}$}
                    child{node(3){$\displaystyle \frac{\Gamma_t^2F_\varepsilon^2}{4}$}
                        child{node{$\displaystyle \frac{\Gamma_t^nF_\varepsilon^n}{2^n}$} edge from parent[dashed]}}
                    child{node{$\Gamma_t^2\displaystyle \frac{F_\varepsilon}{2}f(\varepsilon)t$}}}
            child{node(2){$\Gamma_t \displaystyle f(\varepsilon)t$}
                    child{node{$\Gamma_t^2\displaystyle \frac{F_\varepsilon}{2}f(\varepsilon)t$}}
                    child{node(4){$\Gamma_t^2 \displaystyle \frac{(f(\varepsilon)t)^2}{2}$}
                    child{node{$\Gamma_t^n\displaystyle \frac{(f(\varepsilon)t)^n}{n!}$} edge from parent[dashed]}}
            }};
            \node at($(3-1)!.5!(4-1)$){$\Gamma_t^n\displaystyle \binom{m-1}{k}\p{\frac{F_\varepsilon}{2}}^{n-k-1}\frac{(f(\varepsilon)t)^{k+1}}{(k+1)!}$};
            
            \node at($(1)!.5!(2)$){+};
            \node at($(3)!.5!(1-2)$){+};
            \node at($(1-2)!.5!(2-1)$){+};
            \node at($(2-1)!.5!(4)$){+};
            \node at($(3-1)!.16!(4-1)$){$+\cdots+$};
            \node at($(3-1)!.84!(4-1)$){$+\cdots+$};
    \end{tikzpicture}
    \caption{The left child of each vertex corresponds to the bound obtained by handling the integral over $A_\varepsilon$ while the right child corresponds to the integral over $A_\varepsilon^c$.
    Each $1 \leq m \leq n$ corresponds to the $m$th subtree from the left whose respective roots are the right children of the leftmost vertex at each depth.
    Moreover, for any given path in a subtree, each $0 \leq k \leq m - 1$ corresponds to the number of rights in the path.}
    \label{fig:my_label}
\end{figure}
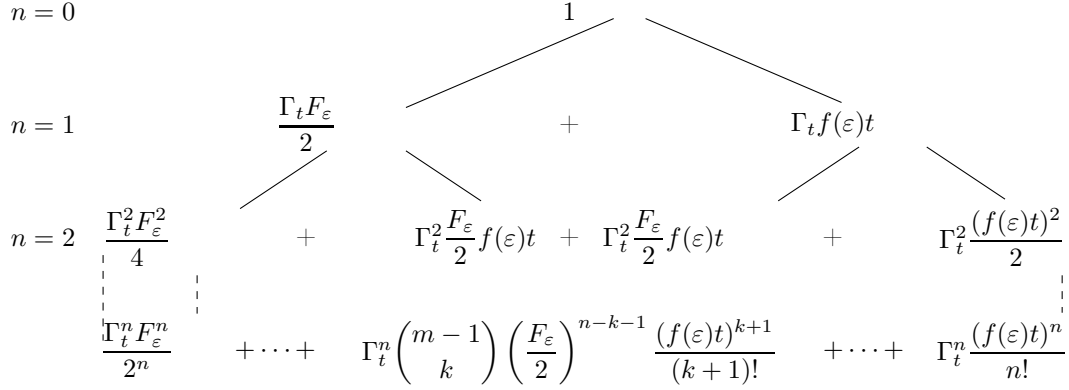

Therefore, in order to rigorously prove an analog to (\ref{eqn:unUnifL2BdSTWN}), we need the following technical lemma.
\begin{lemma}
    \label{lem:unTreeNodeBd}
    Let $z_1,z_2,t_1,t_2 > 0$ and let $k \geq 0$.
    Then for $U$ as in (\ref{eqn:fsSqIntDefn}) and any $\varepsilon > 0$ we have
    \begin{align}
        \int_{[0,t_1]\times[0,t_2]}\hspace{-12mm}\gamma(r-s)&\int_{\R_+^2} f(x-y)q_0(z_1,x,t_1-r)q_0(z_2,y,t_2-s)(r s)^{\frac{k}{2}}\mathrm dx\mathrm dy\mathrm dr\mathrm ds \notag \\
        \label{eqn:unTreeNodeBd} &\leq \Gamma_{t_1\vee t_2} \p{F_\varepsilon \sqrt{U\p{\frac{z_1}{t_1}}U\p{\frac{z_2}{t_2}}t_1^k t_2^k} + \frac{f(\varepsilon)(t_1 t_2)^{\frac{k+1}{2}}}{k+1}}.
    \end{align}
\end{lemma}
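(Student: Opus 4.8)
The plan is to mimic the near/far decomposition used in Lemma \ref{lem:u1BdColored} (the $k=0$ case), but now carrying the extra weight $(rs)^{k/2}$ through the estimates. First I would split the spatial integral over $\R_+^2$ into the region $A_\varepsilon = \{|x-y| < \varepsilon\}$ and its complement $A_\varepsilon^c$, writing the left-hand side as $M_1 + M_2$ accordingly. On $A_\varepsilon$ the term $f(x-y)$ is integrable with $\int f(x-y)\,\mathrm dx \leq F_\varepsilon$ uniformly in $y$ (and symmetrically), so Proposition \ref{prop:modifiedYoung} (Young's inequality in the convolution-type form, applied once in the spatial variables and once in the temporal variables, exactly as in Lemma \ref{lem:u1BdColored}) gives
\[
    M_1 \leq \Gamma_{t_1 \vee t_2} F_\varepsilon \p{\int_0^{t_1}\hspace{-1mm}\int_0^\infty q_0^2(z_1,x,t_1-r) r^k \,\mathrm dx\,\mathrm dr \int_0^{t_2}\hspace{-1mm}\int_0^\infty q_0^2(z_2,y,t_2-s) s^k \,\mathrm dy\,\mathrm ds}^{\frac 12}.
\]
Since $r^k \leq t_1^k$ on $[0,t_1]$, the inner double integral is bounded by $t_1^k U(z_1/t_1)$ via Lemma \ref{lem:fsSqInt}, and likewise for the other factor; this yields the first summand $\Gamma_{t_1\vee t_2} F_\varepsilon \sqrt{U(z_1/t_1)U(z_2/t_2) t_1^k t_2^k}$.

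For $M_2$, on $A_\varepsilon^c$ monotonicity of $f$ gives $f(x-y) \leq f(\varepsilon)$, after which the spatial integrals decouple and each becomes $\int_0^\infty q_0(z_i,\cdot,t_i-\cdot)\,\mathrm dx = u_0(z_i, t_i - r) \leq 1$ by Lemma \ref{lem:u0Int}. Thus
\[
    M_2 \leq f(\varepsilon) \int_{[0,t_1]\times[0,t_2]} \hspace{-10mm}\gamma(r-s)(rs)^{\frac k2}\,\mathrm dr\,\mathrm ds.
\]
To handle this I would again invoke Proposition \ref{prop:modifiedYoung} in the temporal variables, treating $r^{k/2}\in L^2([0,t_1])$ and $s^{k/2}\in L^2([0,t_2])$ as the two functions; this bounds the double integral by $\Gamma_{t_1\vee t_2}\|r^{k/2}\|_{L^2([0,t_1])}\|s^{k/2}\|_{L^2([0,t_2])} = \Gamma_{t_1\vee t_2}\cdot \frac{t_1^{(k+1)/2}}{\sqrt{k+1}}\cdot\frac{t_2^{(k+1)/2}}{\sqrt{k+1}}$, giving the second summand $\Gamma_{t_1\vee t_2} f(\varepsilon)(t_1 t_2)^{(k+1)/2}/(k+1)$. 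Adding the two estimates produces exactly \eqref{eqn:unTreeNodeBd}.

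The main obstacle is making sure Proposition \ref{prop:modifiedYoung} really applies in the weighted setting: in the $M_1$ step one must pull $r^k$ and $s^k$ \emph{inside} the respective single-variable $L^2$ norms before the temporal Young inequality is applied, so the right bookkeeping is to first absorb $(rs)^{k/2}$ into the $q_0^2$ factors and only then use $r^k\le t_1^k$; and in the $M_2$ step the temporal integral is $\iint \gamma(r-s)(rs)^{k/2}$ rather than $\iint\gamma(r-s)$, so one needs the version of modified Young's inequality with two distinct $L^2$ functions rather than with indicator-type functions. Everything else is a routine repetition of the arguments already carried out for $n=1$ and $n=2$ in the text, so I do not expect any genuinely new difficulty beyond this bookkeeping.
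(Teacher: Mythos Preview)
Your proposal is correct and follows essentially the same route as the paper: the same $A_\varepsilon/A_\varepsilon^c$ split, the same two applications of Proposition~\ref{prop:modifiedYoung}, and the same appeals to Lemmas~\ref{lem:fsSqInt} and~\ref{lem:u0Int}. The only cosmetic difference is that the paper bounds $(rs)^{k/2}\le (t_1t_2)^{k/2}$ \emph{before} invoking the $k=0$ estimate for the near part, whereas you carry $r^k$ through Young's inequality and bound it afterward; both orderings give the identical result.
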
 
\begin{proof}
    Following a similar strategy to Lemma \ref{lem:u1BdColored}, define $A_\varepsilon$ as before and
    \begin{align}
        \label{eqn:unTreeNodeBd1} \raisetag{-6.5mm} \hspace{10mm}
        L_{1,k} &:= \int_{[0,t_1]\times[0,t_2]}\hspace{-12mm}\gamma(r-s)\int_{A_\varepsilon} f(x-y)q_0(z_1,x,t_1-r)q_0(z_2,y,t_2-s)(rs)^{\frac{k}{2}}\mathrm dx\mathrm dy\mathrm dr\mathrm ds,\\
        \label{eqn:unTreeNodeBd2} \raisetag{-6.5mm} \hspace{10mm}
        L_{2,k} &:= \int_{[0,t_1]\times[0,t_2]}\hspace{-12mm}\gamma(r-s)\int_{A_\varepsilon^c} f(x-y)q_0(z_1,x,t_1-r)q_0(z_2,y,t_2-s)(rs)^{\frac{k}{2}}\mathrm dx\mathrm dy\mathrm dr\mathrm ds.
    \end{align}
    We easily treat (\ref{eqn:unTreeNodeBd1}) by reusing the bound for $L_{1,0}$ in (\ref{eqn:u1BdColored1}):
    \begin{align}
         \hspace{10mm} L_{1,k} &\leq (t_1t_2)^{\frac{k}{2}} \int_{[0,t_1]\times[0,t_2]}\hspace{-12mm}\gamma(r-s)\int_{A_\varepsilon} f(x-y)q_0(z_1,x,t_1-r)q_0(z_2,y,t_2-s) \mathrm dx\mathrm dy\mathrm dr\mathrm ds \notag \\
         \raisetag{-8.75mm} &\leq \Gamma_{t_1\vee t_2}F_\varepsilon \sqrt{U\p{\frac{z_1}{t_1}}U\p{\frac{z_2}{t_2}}t_1^k t_2^k}. \label{eqn:unNear}
    \end{align}
    To bound (\ref{eqn:unTreeNodeBd2}), we again apply a similar strategy used to obtain the bound for $L_{2,0}$ in (\ref{eqn:u1BdColored2}):
    \begin{align}
        L_{2,k} &\leq f(\varepsilon) \int_{[0,t_1]\times[0,t_2]}\hspace{-12mm}\gamma(r-s)(r s)^{\frac{k}{2}} u_0(z_1, t_1-r)u_0(z_2, t_2-s) \mathrm dr\mathrm ds \notag \\
        &\leq f(\varepsilon) \int_{[0,t_1]\times[0,t_2]}\hspace{-12mm}\gamma(r-s)(r s)^{\frac{k}{2}} \mathrm dr\mathrm ds \notag \\
        &\leq \Gamma_{t_1 \vee t_2}f(\varepsilon) \p{\int_0^{t_1} r^k \mathrm dr \int_0^{t_2} s^k\mathrm ds}^{\frac{1}{2}} \notag \\
        &= \frac{\Gamma_{t_1 \vee t_2}f(\varepsilon)(t_1t_2)^{\frac{k+1}{2}}}{k+1}, \label{eqn:unFar}
    \end{align}
    where the penultimate inequality follows per usual by Proposition \ref{prop:modifiedYoung}.
    Hence, the result of the lemma following by summing (\ref{eqn:unNear}) and (\ref{eqn:unFar}).
\end{proof}

We are now equipped to present the solution to (\ref{eqn:SKE1}).
\begin{theorem}
    \label{thm:euColored}
    Suppose $f$ satisfies conditions (\textbf{i}) and (\textbf{ii}).
    Then for every $z, t > 0$, $u(z,t)$ as in (\ref{eqn:solnWienerChaosExpansion}) is well-defined as a convergent series in $L^2(\Omega)$.
    Moreover for $\varepsilon > 0$ sufficiently small ($\varepsilon$ depends on $t)$,
    \begin{align}
        \E[u^2(z,t)] \leq u_0(z,t)^2 + & 
        U\p{\frac{z}{t}}\frac{2\Gamma_tF_\varepsilon}{2-\Gamma_tF_\varepsilon} \notag \\ 
        &\hspace{10mm}+ \frac{\sqrt 2 \Gamma_tf(\varepsilon)t}{\sqrt 2 - \sqrt{\Gamma_tF_\varepsilon}}\exp\set{\frac{2f(\varepsilon)t\sqrt{\Gamma_tF_\varepsilon}}{F_\varepsilon(\sqrt 2 - \sqrt{\Gamma_tF_\varepsilon})}}. \label{eqn:unifL2BdColored}
    \end{align}
    $u = \set{u(z,t) \mid z, t \geq 0}$ is the unique mild solution to (\ref{eqn:SKE1}).
\end{theorem}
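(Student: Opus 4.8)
The argument splits into an analytic core and a structural part. The analytic core is to produce a \emph{summable} explicit bound on $\set{\|f_n(\cdot;z,t)\|_n^2}_{n\ge 0}$ (recall the abuse of notation $\|f_n(\cdot;z,t)\|_n^2 = n!\|\tilde f_n(\cdot;z,t)\|_n^2 = \E[\mathcal I_n(f_n(\cdot;z,t))^2]$), equivalently on the bilinear quantities $\inner{f_n(\cdot;z_1,t_1)}{f_n(\cdot;z_2,t_2)}_n$ satisfying the recursion (\ref{eqn:recursiveNormGen}); this gives both the $L^2(\Omega)$-convergence of (\ref{eqn:solnWienerChaosExpansion}) and the moment estimate (\ref{eqn:unifL2BdColored}). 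The structural part — joint measurability, the integral identity (\ref{eqn:SKE1IntForm}), and uniqueness — then follows the proof of Theorem~\ref{thm:euSTWN} almost verbatim, with the bilinear estimate playing the role that the uniform bound (\ref{eqn:unifL2BdSTWN}) played in the white-noise case.

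For the core, I would fix $z,t>0$, pick $\varepsilon>0$ small enough that $\Gamma_tF_\varepsilon<2$ (possible by condition (\textbf{ii}), since $\Gamma_t<\infty$ and $F_\varepsilon\searrow 0$ as $\varepsilon\searrow 0$), and prove by induction on $n$ the tree bound of Figure~\ref{fig:my_label}: $\inner{f_n(\cdot;z_1,t_1)}{f_n(\cdot;z_2,t_2)}_n$ is dominated by a sum over the root-to-leaf paths of a binary tree of depth $n$, where along a path a left edge multiplies by $\Gamma_{t_1\vee t_2}F_\varepsilon/2$ leaving the running power of $(t_1t_2)^{1/2}$ unchanged, a right edge taken when that power is $k$ multiplies by $\Gamma_{t_1\vee t_2}f(\varepsilon)(t_1t_2)^{1/2}/(k+1)$ and raises the power to $k+1$, and the leftmost (all-left) path additionally retains the factor $\sqrt{U(z_1/t_1)U(z_2/t_2)}$ while the estimate $U\le 1/2$ of Lemma~\ref{lem:fsSqInt} is used on the inner levels of every other path. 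The inductive step is exactly one application of Lemma~\ref{lem:unTreeNodeBd} to (\ref{eqn:recursiveNormGen}): one writes the level-$(n-1)$ bound as a polynomial in $(rs)^{1/2}$ with non-negative coefficients, splits the $x,y$-integral over $A_\varepsilon$ versus $A_\varepsilon^c$ (a left, resp. right, edge), and uses $r\le t_1$, $s\le t_2$ together with monotonicity of $\Gamma$ to factor out $\Gamma_{t_1\vee t_2}$ cleanly at each stage. Specializing to $t_1=t_2=t$ and $z_1=z_2=z$ and summing the leaf values over $n\ge 0$: the leftmost spine contributes the geometric series $U(z/t)\sum_{n\ge 1}(\Gamma_tF_\varepsilon)^n2^{-(n-1)} = U(z/t)\cdot\big(2\Gamma_tF_\varepsilon/(2-\Gamma_tF_\varepsilon)\big)$, which converges precisely when $\Gamma_tF_\varepsilon<2$; the remaining leaves organize, as in the caption of Figure~\ref{fig:my_label}, into subtrees indexed by $1\le m\le n$ with $k$ right turns, and summing the binomial weights over $k$ and then over $m$ and $n$ (recognizing the exponential series $\sum_k(f(\varepsilon)t)^k/k!$, and using elementary estimates such as $e^c-1\le ce^c$ and $\sum_j\binom{j+k}{k}x^j=(1-x)^{-(k+1)}$ for $\Gamma_tF_\varepsilon/2<1$) majorizes them by the $\exp\{\cdots\}$ term of (\ref{eqn:unifL2BdColored}); the $n=0$ leaf is $u_0(z,t)^2$. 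Summability of $\E[\mathcal I_n(f_n(\cdot;z,t))^2]=\|f_n(\cdot;z,t)\|_n^2$ then gives convergence of (\ref{eqn:solnWienerChaosExpansion}) in $L^2(\Omega)$ and the bound (\ref{eqn:unifL2BdColored}); running the same computation with $f_{n+1}$ in place of $f_n$ shows $\sum_{n\ge 0}\|f_{n+1}(\cdot;z,t)\|_{n+1}^2<\infty$, which is what is needed for the integral identity below.

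For the structural part I would follow Theorem~\ref{thm:euSTWN} step by step. First $(z,t)\mapsto u(z,t)$ is $L^2(\Omega)$-continuous: by (\ref{eqn:fnKernelsDefn}) the difference $f_n(\cdot;z_1,t_1)-f_n(\cdot;z_2,t_2)$ factors as a difference of $q_0$-kernels tensored with $\ind{(0,\cdot)}(\cdot)f_{n-1}$, so $\E[|u_n(z_1,t_1)-u_n(z_2,t_2)|^2]$ is controlled — via the same $A_\varepsilon/A_\varepsilon^c$ splitting, with the bilinear tree bound (uniform in the spatial variables, increasing in time) dominating the inner chaos norms — by an integral of $q_0$-differences that vanishes as $(z_1,t_1)\to(z_2,t_2)$ by dominated convergence and continuity of $q_0$; summing over $n$ against the majorant from the core yields $L^2(\Omega)$-continuity, hence a jointly measurable modification by Theorem~30 of Chapter~IV of \cite{DellacherieMaisonneuveMeyer1992}, giving condition~(\textbf{a}) of Definition~\ref{defn:mildSolution}. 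For condition~(\textbf{b}), $u^{(z,t)}(w,\tau):=\ind{(0,t)}(\tau)q_0(z,w,t-\tau)u(w,\tau)$ has chaos kernels $f_{n+1}(\cdot,w,\tau;z,t)\in H^{\otimes(n+1)}$ and $\sum_n\mathcal I_{n+1}(f_{n+1}(\cdot;z,t))=\sum_n u_n(z,t)$ converges in $L^2(\Omega)$ by the tail bound, so Proposition~\ref{prop:swapSeriesAndDiv} gives Skorohod integrability together with $\delta(u^{(z,t)})=u(z,t)-u_0(z,t)$, i.e.\ (\ref{eqn:SKE1IntForm}). Uniqueness is verbatim the argument of Theorem~\ref{thm:euSTWN}: a competing mild solution $v$ has chaos kernels $k_n$ forced by (\ref{eqn:SKE1IntForm}) and Proposition~\ref{prop:swapSeriesAndDiv} to satisfy $k_{n+1}(\cdot;z,t)=\widetilde{\ind{(0,t)}(\tau)q_0(z,\cdot,t-\tau)k_n(\cdot;\cdot)}$; descending to the explicitly determined $k_1$ and iterating gives $k_n=\tilde f_n$ for all $n$, so $v=u$.

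I expect the main obstacle to be the analytic core, and within it the combinatorial bookkeeping: phrasing the induction hypothesis in exactly the polynomial-in-$(rs)^{1/2}$ form to which Lemma~\ref{lem:unTreeNodeBd} applies term by term, correctly tracking that only the leftmost spine retains the factor $U(z/t)$, and choosing the right elementary estimates so that the triple sum over $(n,m,k)$ resums into the closed form (\ref{eqn:unifL2BdColored}) with its two denominators $2-\Gamma_tF_\varepsilon$ and $\sqrt 2-\sqrt{\Gamma_tF_\varepsilon}$ and the exponential factor. By contrast, once this summable bilinear bound is in hand, the $L^2(\Omega)$-continuity, the integral identity, and the uniqueness are routine adaptations of the space-time white noise case.
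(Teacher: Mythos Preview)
Your proposal is correct and follows essentially the same route as the paper: the inductive tree bound (\ref{eqn:unCovBdColored}) built from Lemma~\ref{lem:unTreeNodeBd}, summed over $n$, with the structural verifications (joint measurability, the integral identity via Proposition~\ref{prop:swapSeriesAndDiv}, uniqueness) imported verbatim from Theorem~\ref{thm:euSTWN}. Your resummation via the negative-binomial identity $\sum_{j\ge 0}\binom{j+k}{k}x^j=(1-x)^{-(k+1)}$ is in fact slightly cleaner than the paper's cruder shortcut $\binom{n}{k}\le 2^n$ (which forces the paper to take $\Gamma_tF_\varepsilon<1$ rather than your $\Gamma_tF_\varepsilon<2$), though neither derivation lands exactly on the displayed constants in (\ref{eqn:unifL2BdColored}); this is cosmetic, as any finite bound suffices for the theorem.
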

\begin{proof}
    We focus on the convergence of the series in (\ref{eqn:solnWienerChaosExpansion}) to $u(z,t)$ in $L^2(\Omega)$.
    The rest of the theorem follows similarly as in the proof of Theorem \ref{thm:euSTWN} for the case of space-time white noise.

    Let $z_1, z_2, t_1, t_2, \varepsilon > 0$, we begin by inductively showing 
    \begin{align}
        &\inner{f_{n}(\cdot;z_1,t_1)}{f_{n}(\cdot;z_2,t_2)}_n \leq \p{\frac{\Gamma_{t_1\vee t_2}F_\varepsilon}{2}}^n \left[2\sqrt{U\p{\frac{z_1}{t_1}}U\p{\frac{z_2}{t_2}}}\right.\notag\\
        \raisetag{-8.75mm}&\hspace{55mm}+ \left.\sum_{m=1}^n \sum_{k=0}^{m-1}\binom{m-1}{k}\frac{1}{(k+1)!}\p{\frac{2f(\varepsilon)\sqrt{t_1t_2}}{F_\varepsilon}}^{k+1} \right]\label{eqn:unCovBdColored}
    \end{align}
    for $n \geq 1$.
    The base case $n = 1$ is proved in Lemma \ref{lem:u1BdColored}.
    Next, let $n \geq 2$ and suppose (\ref{eqn:unCovBdColored}) holds for $n - 1$.
    Then by (\ref{eqn:recursiveNormGen}), we obtain
    \begin{align*}
        &\inner{f_{n}(\cdot;z_1,t_1)}{f_{n}(\cdot;z_2,t_2)}_n \\
        &\leq \int_{[0,t_1]\times[0,t_2]}\hspace{-12mm}\gamma(r-s)\int_{\R_+^2}f(x-y)q_0(z_1,x,t_1-r)q_0(z_2,y,t_2-s)\\
        &\hspace{5mm}\p{\frac{\Gamma_{r\vee s}F_\varepsilon}{2}}^{n-1}\b{1 + \sum_{m=1}^{n-1} \sum_{k=0}^{m-1}\binom{m-1}{k}\frac{1}{(k+1)!}\p{\frac{2f(\varepsilon)\sqrt{rs}}{F_\varepsilon}}^{k+1}} \mathrm dx\mathrm dy\mathrm dr\mathrm ds\\
        &= \p{\frac{\Gamma_{t_1\vee t_2}F_\varepsilon}{2}}^{n-1} \int_{[0,t_1]\times[0,t_2]}\hspace{-12mm}\gamma(r-s)\int_{\R_+^2} f(x-y)q_0(z_1,x,t_1-r)q_0(z_2,y,t_2-s)\mathrm dx\mathrm dy\mathrm dr\mathrm ds\\
        &\hspace{5mm}+ \p{\frac{\Gamma_{t_1 \vee t_2}F_\varepsilon}{2}}^{n-1} \sum_{m=1}^{n-1}\sum_{k=0}^{m-1}\binom{m-1}{k}\frac{1}{(k+1)!} \p{\frac{2f(\varepsilon)}{F_\varepsilon}}^{k+1}\\
        &\hspace{10mm}\cdot\int_{[0,t_1]\times[0,t_2]}\hspace{-12mm}\gamma(r-s)\int_{\R_+^2} f(x-y)q_0(z_1,x,t_1-r)q_0(z_2,y,t_2-s)(r s)^{\frac{k+1}{2}}\mathrm dx\mathrm dy\mathrm dr\mathrm ds.
    \end{align*}
    By applying Lemma \ref{lem:unTreeNodeBd} to both integrals, it then follows that
    \begin{align*}
        &\E\b{u_n(z_1,t_1)u_n(z_2,t_2)} \leq \p{\frac{\Gamma_{t_1 \vee t_2}F_\varepsilon}{2}}^{n} \left[2 \sqrt{U\p{\frac{z_1}{t_1}}U\p{\frac{z_2}{t_2}}} + \frac{2f(\varepsilon)\sqrt{t_1t_2}}{F_\varepsilon} \right. \\
        &\left.+ \sum_{m=1}^{n-1}\sum_{k=0}^{m-1}\binom{m-1}{k}
        \p{\p{\frac{F_\varepsilon}{2}}^{-k-1}\frac{(f(\varepsilon)\sqrt{t_1t_2})^{k+1}}{(k+1)!} + \p{\frac{F_\varepsilon}{2}}^{-k-2}\frac{(f(\varepsilon)\sqrt{t_1 t_2})^{k+2}}{(k+2)!}}\right].
    \end{align*}
    Regrouping terms, we see that 
    \begin{align*}
        \sum_{m=1}^{n-1}\sum_{k=0}^{m-1}\binom{m-1}{k}
        &\p{\p{\frac{F_\varepsilon}{2}}^{-k-1}\frac{(f(\varepsilon)\sqrt{t_1t_2})^{k+1}}{(k+1)!} + \p{\frac{F_\varepsilon}{2}}^{-k-2}\frac{(f(\varepsilon)\sqrt{t_1 t_2})^{k+2}}{(k+2)!}}\\
        &= \sum_{m=2}^n\sum_{k=0}^{m-1}\binom{m-1}{k}\frac{1}{(k+1)!}\p{\frac{2f(\varepsilon)\sqrt{t_1 t_2}}{F_\varepsilon}}^{k+1},
    \end{align*}
    and
    \begin{align*}
        \frac{2f(\varepsilon)\sqrt{t_1t_2}}{F_\varepsilon} &+ \sum_{m=2}^n\sum_{k=0}^{m-1}\binom{m-1}{k}\frac{1}{(k+1)!}\p{\frac{2f(\varepsilon)\sqrt{t_1 t_2}}{F_\varepsilon}}^{k+1}\\
        &= \sum_{m=1}^n\sum_{k=0}^{m-1}\binom{m-1}{k}\frac{1}{(k+1)!}\p{\frac{2f(\varepsilon)\sqrt{t_1 t_2}}{F_\varepsilon}}^{k+1},
    \end{align*}
    from which (\ref{eqn:unCovBdColored}) follows.

    Setting $z_1 = z_2 = z, t_1 = t_2 = t > 0$, (\ref{eqn:unCovBdColored}) simplifies to
    \begin{align}
        \raisetag{-8.5mm} \hspace{10mm} \label{eqn:unBdColored}
        \|f_{n}(\cdot;z,t)\|_n^2 \leq \p{\frac{\Gamma_{t}F_\varepsilon}{2}}^n \b{ 2U\p{\frac{z}{t}} + \sum_{m=1}^n \sum_{k=0}^{m-1}\binom{m-1}{k}\frac{1}{(k+1)!}\p{\frac{2f(\varepsilon)t}{F_\varepsilon}}^{k+1} }.
    \end{align}
    
    We proceed to prove (\ref{eqn:unifL2BdColored}).
    For every $z, t > 0$, choose $\varepsilon > 0$ such that $\Gamma_t F_\varepsilon < 1$.
    By (\ref{eqn:unBdColored}), we have 
    \begin{align*}
        &\sum_{n=0}^\infty \|f_{n}(\cdot;z,t)\|_n^2 = u_0(z,t)^2 + \sum_{n=1}^\infty \|f_{n}(\cdot;z,t)\|_n^2\\
        &\leq u_0(z,t)^2 + U\p{\frac{z}{t}}\sum_{n=1}^\infty \frac{\p{\Gamma_tF_\varepsilon}^n}{2^{n-1}} + \sum_{n=1}^\infty \p{\frac{\Gamma_tF_\varepsilon}{2}}^n \sum_{m=1}^n \sum_{k=0}^{m-1}\binom{m-1}{k}\frac{1}{(k+1)!}\p{\frac{2f(\varepsilon)t}{F_\varepsilon}}^{k+1}.
    \end{align*}
    By the choice of $\varepsilon$, we have $\frac{\Gamma_tF_\varepsilon}{2} < 1$ and hence
    \[
        \sum_{n=1}^\infty \frac{\Gamma_t^nF_\varepsilon^n}{2^{n-1}} = \frac{\Gamma_tF_\varepsilon}{1 - \p{\displaystyle \frac{\Gamma_tF_\varepsilon}{2}}} = \frac{2\Gamma_tF_\varepsilon}{2-\Gamma_tF_\varepsilon} < \infty.
    \]
    It remains to show the last series converges.
    First, we have
    \begin{align*}
        \sum_{k=0}^{m-1}\binom{m-1}{k}\frac{1}{(k+1)!}\p{\frac{2f(\varepsilon)t}{F_\varepsilon}}^{k+1} &= \frac{2f(\varepsilon)t}{F_\varepsilon} \sum_{k=0}^{m-1}\binom{m-1}{k}\frac{1}{(k+1)!}\p{\frac{2f(\varepsilon)t}{F_\varepsilon}}^k\\
        &\leq \frac{2f(\varepsilon)t}{F_\varepsilon} \sum_{k=0}^{n-1}\binom{n-1}{k}\frac{1}{(k+1)!}\p{\frac{2f(\varepsilon)t}{F_\varepsilon}}^k,
    \end{align*}
    from which it follows that 
    \[
        \sum_{m=1}^n \sum_{k=0}^{m-1}\binom{m-1}{k}\frac{1}{(k+1)!}\p{\frac{2f(\varepsilon)t}{F_\varepsilon}}^{k+1} \leq \frac{2f(\varepsilon)t}{F_\varepsilon} n \sum_{k=0}^{n-1}\binom{n-1}{k}\frac{1}{(k+1)!}\p{\frac{2f(\varepsilon)t}{F_\varepsilon}}^k,
    \]
    and so
    \begin{align*}
        \sum_{n=1}^\infty \p{\frac{\Gamma_tF_\varepsilon}{2}}^n &\sum_{m=1}^n \sum_{k=0}^{m-1}\binom{m-1}{k}\frac{1}{(k+1)!}\p{\frac{2f(\varepsilon)t}{F_\varepsilon}}^{k+1}\\
        &\leq \Gamma_tf(\varepsilon)t \sum_{n=1}^\infty \p{\frac{\Gamma_tF_\varepsilon}{2}}^{n-1} n \sum_{k=0}^{n-1} \binom{n-1}{k} \frac{1}{k!} \p{\frac{2f(\varepsilon)t}{F_\varepsilon}}^k\\
        &= \Gamma_tf(\varepsilon)t \sum_{n=0}^\infty \p{\frac{\Gamma_tF_\varepsilon}{2}}^{n} (n+1) \sum_{k=0}^{n} \binom{n}{k} \frac{1}{k!} \p{\frac{2f(\varepsilon)t}{F_\varepsilon}}^k\\
        &\leq \Gamma_tf(\varepsilon)t \sum_{n=0}^\infty\p{\frac{\Gamma_tF_\varepsilon}{2}}^n(n+1)\sum_{k=0}^n 2^n \frac{1}{k!} \p{\frac{2f(\varepsilon)t}{F_\varepsilon}}^k\\
        &\leq \Gamma_tf(\varepsilon)t e^{\frac{2f(\varepsilon)t}{F_\varepsilon}}\sum_{n=0}^\infty (\Gamma_tF_\varepsilon)^n(n+1)\\
        &= \frac{\Gamma_tf(\varepsilon)t}{(\Gamma_tF_\varepsilon -1)^2}e^{\frac{2f(\varepsilon) t}{F_\varepsilon}},
    \end{align*}
    where convergence in the last equality follows by the fact that $\Gamma_t F_\varepsilon < 1$.
    Hence for all $z, t > 0$ we have (\ref{eqn:unifL2BdColored}) and so the series in (\ref{eqn:solnWienerChaosExpansion}) converges to $u(z,t)$ in $L^2(\Omega)$.
    The rest of the theorem follows similarly as in the proof of Theorem \ref{thm:euSTWN} for the case of space-time white noise.
\end{proof}

\subsection{Ratio Comparison in $L^2$}
\label{sec:ratio}

With existence and uniqueness established, next we study the behavior of the solution near the boundary of the domain.
As in (3.10) in Lemma 3.4 of \cite{chenDEG}, it is insightful to study how the solution to the Kimura equation with a potential compares to the unperturbed solution.
In our setting the solution $u$ is a random field, so instead of a point-wise comparison, we examine the ratio between $u(z,t)$ and the unperturbed, deterministic solution $u_0(z,t)$ in $L^2(\Omega)$.
Specifically, recalling that $u_n(z,t) := \mathcal I_n(f_n(\cdot ; z, t))$ with $f_n$ as defined in (\ref{eqn:fnKernelsDefn}), we study
\begin{equation}
    \label{eqn:ratioDefn}
    \E\b{\p{\frac{u(z,t)}{u_0(z,t)}}^2} = \sum_{n=0}^\infty \E\b{\p{\frac{u_n(z,t)}{u_0(z,t)}}^2}
\end{equation}
for $z$ near 0 and $t$ small.

In the case of (\ref{eqn:SKE1}) with space-time white noise, we see that the solution is quantifiably different at the boundary.
Indeed, using the fact that $I_1(w) \geq w/2$ for every $w > 0$, by the definition of $\E[u_1^2(z, t)]$, we get
\begin{align*}
  \E\b{u_1^2(z,t)} &= \int_0^t \int_0^\infty q_0^2(z,w,t-r)(1-e^{-\frac{w}{r}})^2\mathrm dw\mathrm dr, \\
  &\geq z^2 \int_0^t \frac{e^{-\frac{2z}{t-r}}}{(t-r)^4} \int_0^\infty e^{-\frac{2w}{t-r}}(1-e^{-\frac{w}{r}})^2\mathrm dw\mathrm dr,
\end{align*}
and so by direction computations it follows that
\begin{align*}
    \E\b{u_1^2(z,t)} &= z^2 \int_0^t \frac{e^{-\frac{2z}{t-r}}}{2t(t-r)(t+r)}\mathrm dr\\
    &\geq \frac{z^2}{4t^2} \int_0^t \frac{e^{-\frac{2z}{t-r}}}{(t-r)}\mathrm dr = \frac{z^2}{4t^2} \int_{\frac{2z}{t}}^\infty \frac{e^{-\tau}}{\tau}\mathrm d\tau.
\end{align*}
Since $1 - e^{-\frac{z}{t}} \approx z/t$ for fixed $t$ and sufficiently small $z$, we in turn have
\begin{align*}
    \lim_{z \searrow 0} \E\b{\p{\frac{u_1(z,t)}{u_0(z,t)}}^2} \geq  \lim_{z \searrow 0} \frac{1}{4}\int_{\frac{2z}{t}}^\infty \frac{e^{-\tau}}{\tau}\mathrm d\tau = \frac{1}{4} \int_0^\infty \frac{e^{-\tau}}{\tau}\mathrm d\tau = \infty,
\end{align*}
where the last integral diverges as it is the limit of the gamma function at zero.

To remedy this, we introduce a degenerate coefficient of order $\beta > 0$ to $\dot W$ and consider the following stochastic Kimura equation:
\[
    \begin{cases}   
        &\partial_{t}u\left(z,t\right) = z\partial_{z}^{2}u\left(z,t\right) + \hat z^\beta u\left(z,t\right)\dot{W}\left(z,t\right) \quad\quad z, t > 0,\\
        &\hspace{3.2mm}u(z, 0) = u_0(z) \hfill z > 0,\\
        &\hspace{3.8mm}u(0, t) = 0 \hfill t > 0,
    \end{cases}
\]
where recall that $\hat z := z \wedge 1$.
It can be shown similarly to $\mathsection \ref{sec:eu}$ that (\ref{eqn:SKE2}) admits a unique mild solution.
In addition, the second moments of the solution can be recursively computed by
\begin{align}
    \E[u_n^2(z,t)] = \int_{[0,t]^2}\hspace{-5mm} &\gamma(r-s) \int_{\R_+^2} \hspace{-3mm} f(x-y)\hat x^\beta \hat y^\beta \notag \\
    &\cdot q_0(z,x,t-r)q_0(z,y,t-s) \E[u_{n-1}(x,r)u_{n-1}(y,s)]\mathrm dx\mathrm dy\mathrm dr\mathrm ds. \label{eqn:recursiveDegenNormGen}
\end{align}

In the following subsections, we show that so long as $\beta > 0$, the stochastic and deterministic solutions are ``comparable'' in a proper sense.
As before, we begin our analysis in the case of space-time white noise which we bootstrap to the case of colored noise.

\subsubsection{Space-Time White Noise}
\label{ssec:ratioSTWN}

Our strategy for the case of space-time white noise is similar to the one used to show convergence of (\ref{eqn:solnWienerChaosExpansion}) in $L^2(\Omega)$.
Namely, we use (\ref{eqn:recursiveNormSTWN}) to recursively compute a bound with some repetitive pattern.
We first make use of the following technical lemma for $n = 1$.

\begin{lemma}
    \label{lem:u1RatioBdSpatialSTWN}
    For $\beta, z, t-\tau, \tau > 0$, define
    \begin{equation}
        \label{eqn:u1RatioDefnSpatialSTWN}
        K(z, \tau; 2\beta) := \int_0^\infty \hat w^{2\beta} q_0^2(z,w,t-\tau)\frac{u_0^2(w,\tau)}{u_0^2(z,t)}\mathrm dw.
    \end{equation}
    Then, there exists $C := C_\beta > 0$, such that for all $z, t, t-\tau, \tau > 0$,
    \begin{equation}
        \label{eqn:degenU1SpatialRatioBd}
        K(z, \tau; 2\beta) \leq \frac{4C}{(t-\tau)^{1-\alpha}}(\hat z \vee t)^{2\beta - \alpha},
    \end{equation}
    whenever $0 < \alpha \leq 2\beta \wedge 1/2$.
\end{lemma}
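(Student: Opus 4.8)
The plan is to estimate $K(z,\tau;2\beta)$ by inserting the sharp bounds on $q_0$ from Proposition~\ref{prop:gaussianBd} and the two-sided bounds on $u_0$ from Lemma~\ref{lem:u0Int}, then reduce the $w$-integral to a one-dimensional Gaussian-type integral that can be controlled explicitly. Write $a := t-\tau$ for the elapsed time in the kernel. First I would use $u_0(w,\tau)\le (w/\tau)\wedge 1$ and $u_0(z,t)\ge e^{-1}(z/t)$ when $z<t$ (and $u_0(z,t)\ge e^{-1}$, up to adjusting constants, when $z\ge t$, since $u_0$ is increasing in $z/t$); together these give $u_0(w,\tau)/u_0(z,t) \le C\,(\hat z\vee t)/(z) \cdot \big((w/\tau)\wedge 1\big)$ type control — more precisely I would split into the regimes $z<t$ and $z\ge t$ and in each case bound the ratio by a constant multiple of $\big(w\wedge\tau\big)\big/\big(\hat z\vee t\big)\cdot\big(1/\tau\big)$-style expressions, keeping careful track so that the final exponent $(\hat z\vee t)^{2\beta-\alpha}$ emerges. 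The factor $\hat w^{2\beta}$ then combines with the $u_0^2(w,\tau)$ factor: since $\hat w^{2\beta}u_0^2(w,\tau)\le \hat w^{2\beta}\big((w/\tau)^2\wedge 1\big)$, and since $\hat w \le w$, we can always extract $w^{2\beta}$ or $w^{2\beta+2}/\tau^2$ as appropriate, and in particular $\hat w^{2\beta} \le \hat w^{\alpha}\le w^{\alpha}$ when $\alpha\le 2\beta$, which is where the hypothesis $\alpha\le 2\beta$ enters.

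Next I would bound $q_0^2(z,w,a)$. Using \eqref{eqn:gaussianBd}, $q_0^2(z,w,a)\le \frac{z^2}{a^4}e^{-2(\sqrt z-\sqrt w)^2/a}$; I expect the cleaner route is to substitute $w = v^2$ (so $\mathrm dw = 2v\,\mathrm dv$) and reduce to integrals of the form $\int_0^\infty v^{m}e^{-2(\sqrt z - v)^2/a}\,\mathrm dv$, which are Gaussian integrals centered at $v=\sqrt z$ with variance $\sim a$. Each such integral is $\le C(\sqrt z + \sqrt a)^{m}\sqrt a$ by standard Gaussian moment estimates. Collecting the powers of $z$, $a$, $\tau$ and using $\sqrt z + \sqrt a \lesssim \sqrt{\hat z\vee t}$ (valid because $a=t-\tau\le t$ and, on the relevant range, $z\lesssim \hat z\vee t$ after the case split), together with the $1/\tau$ factors coming from $u_0(w,\tau)$, should collapse everything into $\frac{C}{a^{1-\alpha}}(\hat z\vee t)^{2\beta-\alpha}$ after matching exponents; the exponent bookkeeping is the routine but delicate part, and the constraint $\alpha\le 1/2$ will be what guarantees the remaining power of $a$ in the denominator stays below $1$ (i.e. keeps the eventual $r$-integral in the induction convergent).

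The main obstacle I anticipate is the regime $w$ small together with $z$ small, where neither the naive Gaussian bound nor the refined bound \eqref{eqn:gaussianBdRefined} (which requires $zw\ge a^2$) is immediately sharp enough, and where the degenerating factors $\hat w^{2\beta}$ and $u_0^2(w,\tau)$ must be used to absorb the singular behavior of $q_0^2$ near the boundary. I would handle this by splitting the $w$-integral at $w\sim \hat z\vee t$ (or at $w \sim a^2/z$), using the refined bound \eqref{eqn:gaussianBdRefined} on the region $zw\ge a^2$ and the crude bound \eqref{eqn:gaussianBd} on its complement, and checking that on the complement the extra powers $w^{2\beta}$ and $(w/\tau)^2$ make the contribution summable with the right power of $(\hat z\vee t)$. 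A secondary technical point is ensuring the estimate is genuinely uniform over all $z,t,\tau>0$ rather than just for $z<t$; this is why I keep the factor in the form $\hat z\vee t$ rather than $z/t$, and why both branches of the lower bound on $u_0(z,t)$ in Lemma~\ref{lem:u0Int} are needed. Once the one-dimensional estimates are assembled, the constant $C_\beta$ depends only on $\beta$ (through the exponent $2\beta$ and the Gaussian moment constants), and the factor $4$ in \eqref{eqn:degenU1SpatialRatioBd} provides the slack to cover the case distinctions.
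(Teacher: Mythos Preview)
Your outline matches the paper's strategy: split the $w$-integral at $w=(t-\tau)^2/z$, apply the crude bound \eqref{eqn:gaussianBd} below and the refined bound \eqref{eqn:gaussianBdRefined} above, substitute $w=u^2$, and reduce to Gaussian moment estimates (Proposition~\ref{prop:gaussianMomentBd}); the paper then further sub-splits into $z\le t-\tau$ versus $z>t-\tau$, close in spirit to your $z<t$ versus $z\ge t$ split.

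The genuine gap is your treatment of the $\tau$-dependence. You write that the ``$1/\tau$ factors coming from $u_0(w,\tau)$'' will collapse into the final bound, but the target $\frac{4C}{(t-\tau)^{1-\alpha}}(\hat z\vee t)^{2\beta-\alpha}$ is independent of $\tau$, and if you use $u_0(w,\tau)\le w/\tau$ throughout you are left with a factor $\tau^{-2}$ that blows up as $\tau\searrow 0$. The paper resolves this with an additional case split at $\tau=t/2$: when $\tau\le t/2$ one drops $u_0^2(w,\tau)\le 1$ entirely and instead exploits $t-\tau\ge t/2$ to absorb the $t^2$ coming from $u_0^{-2}(z,t)\le e^2 t^2/z^2$; when $\tau>t/2$ one does use $u_0(w,\tau)\le w/\tau$ and then $t^2/\tau^2\le 4$ disposes of the remaining $t$-dependence. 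Relatedly, your claimed inequality $\sqrt z+\sqrt a\lesssim\sqrt{\hat z\vee t}$ is false in general (take $z>1$, $t<1$); in the paper the Gaussian moment $\sim (\sqrt z)^m+(\sqrt{t-\tau})^m$ is never compared directly to $\hat z\vee t$ but is balanced against explicit negative powers of $z$ and $t-\tau$ coming from the prefactors in \eqref{eqn:gaussianBd}--\eqref{eqn:gaussianBdRefined}, and this balancing only closes once both the $\tau$-split and the $z\gtrless t-\tau$ split are in place.
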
 
\begin{proof}
    We further split $K(z, \tau; 2\beta)$ into two terms:
    \begin{align*}
        K_1 &:= \int_0^{\frac{(t-\tau)^2}{z}} \hat w^{2\beta}q_0^2(z,w,t-\tau)\frac{u_0^2(w,\tau)}{u_0^2(z,t)}\mathrm dw,\\
        K_2 &:= \int_{\frac{(t-\tau)^2}{z}}^\infty \hat w^{2\beta}q_0^2(z,w,t-\tau)\frac{u_0^2(w,\tau)}{u_0^2(z,t)}\mathrm dw.
    \end{align*}
    We will treat $K_1$ and $K_2$ in two separate cases.
    \\\\
    \emph{Case $\tau \leq t/2$.}
    In this case, we apply Proposition \ref{prop:gaussianBd} and (\ref{eqn:u0Bds}) to bound $K_1$ as
    \begin{align*}
        K_1 &\leq \frac{\displaystyle \frac{z^2}{(t-\tau)^4}\int_0^{\frac{(t-\tau)^2}{z}} \hat w^{2\beta}e^{-\frac{2(\sqrt z - \sqrt w)^2}{(t-\tau)}} \mathrm dw}{\displaystyle e^{-2}\frac{z^2}{t^2}}\\
        \p{t-\tau \geq \frac{t}{2}} &\leq \frac{C}{(t-\tau)^2} \int_0^{\frac{(t-\tau)^2}{z}}\hat w^{2\beta}e^{-\frac{2(\sqrt z - \sqrt w)^2}{(t-\tau)}} \mathrm dw\\
        &= \frac{C}{(t-\tau)^{\frac{3}{2}}} \p{\frac{1}{\sqrt{\pi (t-\tau)}} \int_0^{\frac{t-\tau}{\sqrt z}} \hat u^{4\beta} u e^{-\frac{2(\sqrt z - u)^2}{t-\tau}}\mathrm du }.
    \end{align*}
    Furthermore, if $z \leq t - \tau$ then by Proposition \ref{prop:gaussianMomentBd},
    \begin{align*}
        K_1 &\leq \frac{C}{(t-\tau)^{\frac{3}{2}}} \p{\frac{1}{\sqrt{\pi (t-\tau)}} \int_0^{\frac{t-\tau}{\sqrt z}} u^{4\beta+1} e^{-\frac{2(\sqrt z - u)^2}{t-\tau}}\mathrm du }\\
        &\leq \frac{2C}{(t-\tau)^{\frac{3}{2}}} (t-\tau)^{2\beta+\frac{1}{2}} = 2C(t-\tau)^{2\beta-1}.
    \end{align*}
    Meanwhile, if $z > t - \tau$, again by Proposition \ref{prop:gaussianMomentBd}, we get
    \begin{align*}
        K_1 &\leq C(t-\tau)^{-\frac{1}{2}}(t-\tau)^{-1}\p{\frac{1}{\sqrt{\pi (t-\tau)}} \int_0^{\frac{t-\tau}{\sqrt z}} \hat u^{4\beta} u e^{-\frac{2(\sqrt z - u)^2}{t-\tau}}\mathrm du }\\
        \p{(t-\tau)^{-1} \leq u^{-1}z^{-\frac{1}{2}}} &\leq Cz^{-\frac{1}{2}}(t-\tau)^{-\frac{1}{2}}\p{\frac{1}{\sqrt{\pi (t-\tau)}} \int_0^{\frac{t-\tau}{\sqrt z}} \hat  u^{4\beta}e^{-\frac{2(\sqrt z - u)^2}{t-\tau}}\mathrm du }\\
        &\leq \frac{2Cz^{-\frac{1}{2}}}{\sqrt{t-\tau}}\hat z^{2\beta}  \leq 2C\hat z^{2\beta} \frac{z^{-\alpha}}{(t-\tau)^{1-\alpha}} \leq 2C \frac{\hat z^{2\beta-\alpha}}{(t-\tau)^{1-\alpha}},
    \end{align*}
    for $0 \leq \alpha \leq 2\beta \wedge 1/2$.
    For $K_2$, we apply (\ref{eqn:gaussianBdRefined}) from Proposition \ref{prop:gaussianBd} and (\ref{eqn:u0Bds}) to get
    \begin{align*}
        K_2 &\leq \frac{\displaystyle \frac{\sqrt z}{t-\tau}\int_{\frac{(t-\tau)^2}{z}}^\infty \hat w^{2\beta} w^{-\frac{3}{2}} e^{-\frac{2(\sqrt z - \sqrt w)^2}{t-\tau}}\mathrm dw}{\displaystyle e^{-2}\frac{z^2}{t^2}}\\
        \p{t-\tau \geq \frac{t}{2}} &\leq C z^{-\frac{3}{2}}(t-\tau)\int_{\frac{(t-\tau)^2}{z}}^\infty \hat w^{2\beta} w^{-\frac{3}{2}} e^{-\frac{2(\sqrt z - \sqrt w)^2}{t-\tau}}\mathrm dw\\
        &= Cz^{-\frac{3}{2}}(t-\tau)^{\frac{3}{2}}\p{\frac{1}{\sqrt{\pi(t-\tau)}} \int_{\frac{t-\tau}{\sqrt z}}^\infty \hat u^{4\beta} u^{-2} e^{- \frac{2(\sqrt z - u)^2}{t-\tau}} \mathrm du}\\
        \p{u^{-1} \leq \frac{\sqrt z}{t - \tau}} &\leq C z^{-1}(t-\tau)^{\frac{1}{2}} \p{\frac{1}{\sqrt{\pi(t-\tau)}} \int_{\frac{t-\tau}{\sqrt z}}^\infty \hat u^{4\beta} e^{- \frac{2(\sqrt z - u)^2}{t-\tau}} \mathrm du}.
    \end{align*}
    Furthermore, if $z > t-\tau$, then again by Proposition \ref{prop:gaussianMomentBd}, for $0 \leq \alpha \leq 2\beta \wedge 1/2$, $K_2 \leq 2Cz^{-\frac{3}{2}}(t-\tau)^{\frac{1}{2}}\hat z^{2\beta} \leq 2C\hat z^{2\beta-\alpha} (t-\tau)^{\alpha-1}$.
    Meanwhile, if $z \leq t-\tau$ we get
    \begin{align*}
        K_2 &\leq Cz^{-\frac{3}{2}} \frac{(t-\tau)^3}{(t-\tau)^{\frac{3}{2}}} \p{\frac{1}{\sqrt{\pi(t-\tau)}} \int_{\frac{t-\tau}{\sqrt z}}^\infty u^{4\beta-2}e^{- \frac{2(\sqrt z - u)^2}{t-\tau}} \mathrm du}\\
        \p{(t-\tau)^{-3}\geq u^{-3}z^{-\frac{3}{2}}} &\leq C(t-\tau)^{-\frac{3}{2}}\p{\frac{1}{\sqrt{\pi(t-\tau)}} \int_{\frac{t-\tau}{\sqrt z}}^\infty u^{4\beta+1}e^{- \frac{2(\sqrt z - u)^2}{t-\tau}} \mathrm du}\\
        &\leq 2C (t-\tau)^{2\beta-1}.
    \end{align*}
    \emph{Case $\tau > t/2$}.
    For $K_1$, we apply (\ref{eqn:gaussianBd}) from Proposition \ref{prop:gaussianBd} and (\ref{eqn:u0Bds}) again to get
    \begin{align*}
        K_1 &\leq \frac{\displaystyle \frac{z^2}{(t-\tau)^4} \int_0^{\frac{(t-\tau)^2}{z}} \hat w^{2\beta}e^{-\frac{2(\sqrt z - \sqrt w)^2}{t-\tau}}u_0^2(w,\tau)\mathrm dw}{\displaystyle e^{-2}\frac{z^2}{t^2}}\\
        &\leq \frac{t^2}{(t-\tau)^4} \int_0^{\frac{(t-\tau)^2}{z}} \hat w^{2\beta} e^{-\frac{2(\sqrt z - \sqrt w)^2}{t-\tau}}\frac{w^2}{\tau^2}\mathrm dw\\
        \p{\tau > \frac{t}{2}} &\leq \frac{C}{(t-\tau)^4} \int_0^{\frac{(t-\tau)^2}{z}} \hat w^{2\beta} w^2 e^{-\frac{2(\sqrt z - \sqrt w)^2}{t-\tau}\mathrm dw }\\
        &= \frac{C}{(t-\tau)^{\frac{7}{2}}} \p{\frac{1}{\sqrt{\pi(t-\tau)}} \int_0^{\frac{t-\tau}{\sqrt z}} \hat u^{4\beta} u^5 e^{-\frac{2(\sqrt z - u)^2}{t-\tau}\mathrm du } }.
    \end{align*}
    Furthermore, if $z \leq t - \tau$ then $K_1 \leq 2C(t-\tau)^{2\beta-1}$.
    Meanwhile, if $z > t - \tau$ we get
    \begin{align*}
        K_1 &\leq  \frac{C}{(t-\tau)^{\frac{7}{2}}} \p{\frac{1}{\sqrt{\pi(t-\tau)}} \int_0^{\frac{t-\tau}{\sqrt z}} \hat u^{4\beta} u^5 e^{-\frac{2(\sqrt z - u)^2}{t-\tau}\mathrm du } }\\
        &= C(t-\tau)^{-\frac{1}{2}}(t-\tau)^{-3} \p{\frac{1}{\sqrt{\pi(t-\tau)}} \int_0^{\frac{t-\tau}{\sqrt z}} \hat u^{4\beta} u^5 e^{-\frac{2(\sqrt z - u)^2}{t-\tau}\mathrm du } }\\
        \p{(t-\tau)^{-3} \leq z^{-\frac{3}{2}}u^{-3}} &\leq Cz^{-\frac{3}{2}}(t-\tau)^{-\frac{1}{2}} \p{\frac{1}{\sqrt{\pi(t-\tau)}} \int_0^{\frac{t-\tau}{\sqrt z}} \hat u^{4\beta} u^2 e^{-\frac{2(\sqrt z - u)^2}{t-\tau}\mathrm du } }\\
        &\leq \frac{2C}{\sqrt{t-\tau}} \hat z^{2\beta} z^{-\frac{1}{2}} \leq 2C\hat z^{2\beta} \frac{z^{-\alpha}}{(t-\tau)^{1-\alpha}} \leq 2C \frac{\hat z^{2\beta-\alpha}}{(t-\tau)^{1-\alpha}}.
    \end{align*}
    Similarly as above, for $K_2$ we apply (\ref{eqn:gaussianBdRefined}) from Proposition \ref{prop:gaussianBd} and (\ref{eqn:u0Bds}) to get
    \begin{align*}
        K_2 &\leq \frac{\displaystyle \frac{\sqrt z}{t-\tau} \int_{\frac{(t-\tau)^2}{z}}^\infty \hat w^{2\beta} w^{-\frac{3}{2}} e^{-\frac{2(\sqrt z - \sqrt w)^2}{t-\tau}}u_0^2(w,\tau)\mathrm dw}{\displaystyle e^{-2}\frac{z^2}{t^2}}\\
        &\leq \frac{z^{-\frac{3}{2}}t^2}{t-\tau} \int_{\frac{(t-\tau)^2}{z}}^\infty \frac{\hat w^{2\beta} w^{\frac{1}{2}}}{\tau^2} e^{-\frac{2(\sqrt z - \sqrt w)^2}{t-\tau}} \mathrm dw\\
        \p{\tau > \frac{t}{2}} &\leq \frac{Cz^{-\frac{3}{2}}}{t-\tau} \int_{\frac{(t-\tau)^2}{z}}^\infty \hat w^{2\beta} w^{\frac{1}{2}} e^{-\frac{2(\sqrt z - \sqrt w)^2}{t-\tau}} \mathrm dw\\
        &= \frac{Cz^{-\frac{3}{2}}}{\sqrt{t-\tau}} \p{\frac{1}{\sqrt{\pi (t-\tau)}} \int_{\frac{t-\tau}{\sqrt z}}^\infty \hat u^{4\beta} u^2 e^{-\frac{2(\sqrt z - u)^2}{t-\tau}} \mathrm du }.
    \end{align*}
    Furthermore, if $z > t - \tau$ then $K_2 \leq 2C(t-\tau)^{-\frac{1}{2}}\hat z^{2\beta}z^{-\frac{1}{2}} \leq 2C\hat z^{2\beta-\alpha}(t-\tau)^{-1+\alpha}$.
    Meanwhile, if $z \leq t - \tau$ we get
    \begin{align*}
        K_2 &\leq C z^{-\frac{3}{2}}(t-\tau)^{-\frac{7}{2}}\cdot(t-\tau)^3\p{\frac{1}{\sqrt{\pi (t-\tau)}} \int_{\frac{t-\tau}{\sqrt z}}^\infty u^{4\beta+2} e^ {-\frac{2(\sqrt z - u)^2}{t-\tau}} \mathrm du }\\
        \p{(t-\tau)^3 \leq z^{\frac{3}{2}}u^3} &\leq C(t-\tau)^{-\frac{7}{2}}\p{\frac{1}{\sqrt{\pi (t-\tau)}} \int_{\frac{t-\tau}{\sqrt z}}^\infty u^{4\beta+5} e^{-\frac{2(\sqrt z - u)^2}{t-\tau}} \mathrm du }\\
        &\leq 2C(t-\tau)^{2\beta-1}.
    \end{align*}
    Combining all these cases, we have shown that
    \[
        K(z, \tau; 2\beta) = K_1 + K_2 \leq 4C (t-\tau)^{\alpha-1}(\hat z \vee t)^{2\beta-\alpha}.
    \]
\end{proof}

Note that if $\beta \leq \beta'$, then for every $w > 0$, $\hat w^{2\beta'} \leq \hat w^{2\beta}$ and hence
\begin{equation}
    \label{eqn:monotonicityKInBeta}
    K(z, \tau; 2\beta') \leq K(z, \tau; 2\beta).
\end{equation}

\begin{remark}
    \label{rem:offDiagonalRatioDefnSpatial}
    We observe that for $z, t > 0$ and $\beta > 0$, if we define
    \begin{equation}
        \label{eqn:offDiagonal}
        \tilde K(z, \tau; \beta) := \int_0^\infty \hat w^\beta q_0(z,w,t-\tau) \frac{u_0(w,\tau)}{u_0(w,\tau)}\mathrm dw,
    \end{equation}
    then following the proof of Lemma 3.5 almost step-by-step (with minor changes), we can get that
    \begin{equation}
        \label{eqn:offDiagonalRatioBdSpatial}
        \tilde K(z, \tau; \beta) \leq 4C(\hat z \vee (t-\tau))^\beta.
    \end{equation}
    This observation on $\tilde K(z, \tau; \beta)$ will be useful in $\mathsection \ref{ssec:ratioColored}$ in treating the ratio $u(z,t)/u_0(z,t)$ under colored noise.
\end{remark}

From this, the ratio follows straightforwardly by computing the integral in time.
\begin{corollary}
    \label{cor:degenU1RatioBd}
    Let $\beta, C,$ and $\alpha$ be the same as above and fixed.
    Set $C_{\alpha,\beta} := 4C/\alpha$.
    Then for all $z, t > 0$,
    \begin{equation}
        \label{eqn:degenU1RatioBd}
        \E\b{\p{\frac{u_1(z,t)}{u_0(z,t)}}^2} \leq C_{\alpha, \beta} (\hat z \vee t)^{2\beta-\alpha}.
    \end{equation}
\end{corollary}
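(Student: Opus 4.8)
The plan is to reduce the statement to Lemma \ref{lem:u1RatioBdSpatialSTWN} via one scalar integration in time; all of the genuine work has already been done in that lemma. First I would write $\E[u_1^2(z,t)]$ explicitly. Since $u_1(z,t)=\mathcal I_1(f_1(\cdot;z,t))$ and, in the space-time white noise case, $\inner{\cdot}{\cdot}_H$ is the ordinary $L^2$ inner product, the isometry (\ref{eqn:skorohodIntIsometry}) together with the level-$n=1$ instance of the recursion (\ref{eqn:recursiveDegenNormGen})—using that $u_0(w,\tau)=\E[u(w,\tau)]$ is deterministic, so that $\E[u_0(x,r)u_0(y,s)]=u_0(x,r)u_0(y,s)$—gives
\[
    \E[u_1^2(z,t)] = \int_0^t \int_0^\infty \hat w^{2\beta}\, q_0^2(z,w,t-\tau)\, u_0^2(w,\tau)\,\mathrm dw\,\mathrm d\tau .
\]

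Next I would divide both sides by $u_0^2(z,t)$ and recognize the inner spatial integral as precisely the quantity $K(z,\tau;2\beta)$ defined in (\ref{eqn:u1RatioDefnSpatialSTWN}), so that
\[
    \E\b{\p{\frac{u_1(z,t)}{u_0(z,t)}}^2} = \int_0^t K(z,\tau;2\beta)\,\mathrm d\tau .
\]
Then I would invoke Lemma \ref{lem:u1RatioBdSpatialSTWN}: for any admissible exponent with $0<\alpha\le 2\beta\wedge 1/2$ it yields $K(z,\tau;2\beta)\le 4C\,(t-\tau)^{\alpha-1}(\hat z\vee t)^{2\beta-\alpha}$ uniformly in $z$ and $\tau$. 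The right-hand side factors as a function of $\tau$ times the $\tau$-independent quantity $(\hat z\vee t)^{2\beta-\alpha}$, so integrating via the substitution $s=t-\tau$ gives $\int_0^t (t-\tau)^{\alpha-1}\,\mathrm d\tau = t^\alpha/\alpha$, which is finite because $\alpha>0$. Since $t\le \hat z\vee t$, on the bounded time range relevant here ($t\le 1$, as in the subsequent results) this factor is at most $1/\alpha$; collecting constants produces $\E[(u_1(z,t)/u_0(z,t))^2]\le (4C/\alpha)(\hat z\vee t)^{2\beta-\alpha}$, which is the assertion with $C_{\alpha,\beta}=4C/\alpha$.

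I do not anticipate a real obstacle: the case analysis (on whether $\tau\le t/2$ and whether $z\le t-\tau$) and the Gaussian-type kernel estimates of Proposition \ref{prop:gaussianBd} have already been absorbed into Lemma \ref{lem:u1RatioBdSpatialSTWN}. The only points needing a line of care are (i) the reduction of the $n=1$ recursion via determinism of $u_0(\cdot,\cdot)$, and (ii) the convergence of the time integral, which holds exactly because the exponent $\alpha-1$ exceeds $-1$—the same positivity of $\alpha$ that makes Lemma \ref{lem:u1RatioBdSpatialSTWN} applicable—so the constraint $0<\alpha\le 2\beta\wedge 1/2$ is propagated unchanged into the corollary.
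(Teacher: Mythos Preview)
Your approach is exactly the paper's: its proof is the single line ``Follows directly by integrating the result from Lemma \ref{lem:u1RatioBdSpatialSTWN} in $\tau$ over $[0,t]$,'' and you have simply written this out in full, correctly identifying $\E[(u_1/u_0)^2]=\int_0^t K(z,\tau;2\beta)\,\mathrm d\tau$ and then computing $\int_0^t (t-\tau)^{\alpha-1}\,\mathrm d\tau=t^\alpha/\alpha$. Your observation about the residual factor $t^\alpha$ is apt---the paper's stated bound drops it, but the sharper inequality $C_{\alpha,\beta}\,t^\alpha(\hat z\vee t)^{2\beta-\alpha}$ that your computation actually yields is precisely what is used as the $n=1$ base case of (\ref{eqn:degenUnRatioBd}) in Theorem \ref{thm:ratioBdSTWN}.
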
 
\begin{proof}
    Follows directly by integrating the result from Lemma \ref{lem:u1RatioBdSpatialSTWN} in $\tau$ over $[0,t]$.
\end{proof}

In light of Corollary \ref{cor:degenU1RatioBd}, for every $w, \tau > 0$,
\[
    \E[u_1^2(w,\tau)] \leq C_{\alpha,\beta} \tau^\alpha (\hat w \vee \tau)^{2\beta-\alpha}u_0^2(w, \tau),
\]
and hence it becomes possible for us to re-use the bound for the function $K(z, \tau; \cdot)$ in Lemma \ref{lem:u1RatioBdSpatialSTWN}.
As it turns out, this is the key ingredient for the main result for this section.

\begin{theorem}
    \label{thm:ratioBdSTWN}
    Let $\beta > 0$.
    When $W$ is a space-time white noise, there exists $T \in (0,1)$ such that for every $t \in [0,T]$, there exists $C_{t ,\beta} > 0$ such that
    \begin{equation}
        \label{eqn:spatialRatioBdSTWN}
        \sup_{z > 0} \E\b{\p{\frac{u(z,t)}{u_0(z,t)}}^2}< C_{t, \beta},
    \end{equation}
    where $T$ and $C_{t,\beta}$ can be explicitly determined.
    Furthermore, for every fixed $z > 0$,
    \begin{equation}
        \label{eqn:temporalRatioBdSTWN}
        \lim_{t \searrow 0} \E\b{\p{\frac{u(z,t)}{u_0(z,t)} - 1}^2} = 0.
    \end{equation}
\end{theorem}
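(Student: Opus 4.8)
The plan is to collapse the entire statement into a single Riemann--Liouville-type fractional integral recursion for the normalized chaos norms, and then to sum the resulting series. Fix once and for all $\alpha := (1/2) \wedge 2\beta \in (0, 1/2]$, let $C = C_\beta$ be the constant furnished by Lemma~\ref{lem:u1RatioBdSpatialSTWN}, and for $n \ge 0$ put $R_n(z,t) := \E\b{u_n^2(z,t)} / u_0^2(z,t) = \|f_n(\cdot;z,t)\|_n^2 / u_0^2(z,t)$ and $S_n(t) := \sup_{z > 0} R_n(z,t)$. Recall from the construction (analogous to $\mathsection\ref{sec:eu}$) that $u(z,t) = \sum_{n \ge 0} u_n(z,t)$ in $L^2(\Omega)$ with $u_n(z,t) \in \mathcal H_n$, that $\E[u(z,t)] = u_0(z,t)$ (so $R_0 \equiv 1$, hence $S_0 \equiv 1$), and that the second moments obey (\ref{eqn:recursiveDegenNormGen}). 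Since $u_0(z,t)$ is a positive deterministic scalar, $u(z,t)/u_0(z,t) = \sum_{n\ge 0} u_n(z,t)/u_0(z,t)$ is again a Wiener chaos expansion, and orthogonality gives $\E\b{\p{u(z,t)/u_0(z,t)}^2} = \sum_{n\ge0} R_n(z,t)$ and $\E\b{\p{u(z,t)/u_0(z,t) - 1}^2} = \sum_{n\ge1} R_n(z,t)$, so both quantities in the theorem are controlled by $\sum_n S_n(t)$.

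The first substantive step is to set up the recursion. In the space--time white noise case, $(\ref{eqn:recursiveDegenNormGen})$ collapses (take $f = \gamma = \delta_0$) to $\|f_n(\cdot;z,t)\|_n^2 = \int_0^t\int_0^\infty \hat w^{2\beta} q_0^2(z,w,t-\tau)\|f_{n-1}(\cdot;w,\tau)\|_{n-1}^2\,\mathrm dw\,\mathrm d\tau$; dividing by $u_0^2(z,t)$ and writing $\|f_{n-1}(\cdot;w,\tau)\|_{n-1}^2 = R_{n-1}(w,\tau)u_0^2(w,\tau)$ gives
\[
    R_n(z,t) = \int_0^t \int_0^\infty \hat w^{2\beta} q_0^2(z,w,t-\tau)\, R_{n-1}(w,\tau)\,\frac{u_0^2(w,\tau)}{u_0^2(z,t)}\,\mathrm dw\,\mathrm d\tau.
\]
I would then bound $R_{n-1}(w,\tau) \le S_{n-1}(\tau)$, pull this factor out of the inner integral, and recognize what remains as $K(z,\tau;2\beta)$ from Lemma~\ref{lem:u1RatioBdSpatialSTWN}, obtaining $R_n(z,t) \le \int_0^t S_{n-1}(\tau)\,K(z,\tau;2\beta)\,\mathrm d\tau$. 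Restricting to $t \in (0,1)$ — so that $\hat z \vee t \le 1$ and hence $(\hat z \vee t)^{2\beta-\alpha} \le 1$ — the estimate of Lemma~\ref{lem:u1RatioBdSpatialSTWN} becomes uniform in $z$, whence
\[
    S_n(t) \le 4C\int_0^t (t-\tau)^{\alpha - 1}\,S_{n-1}(\tau)\,\mathrm d\tau, \qquad n \ge 1.
\]
Since $S_0 \equiv 1$ this recursion is self-starting, and inductively each $S_n$ is finite (the $n = 1$ step is also the content of Corollary~\ref{cor:degenU1RatioBd}).

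Next I would solve the recursion. A straightforward induction using the Beta identity $\int_0^t (t-\tau)^{\alpha-1}\tau^{(n-1)\alpha}\,\mathrm d\tau = t^{n\alpha}\,\Gamma(\alpha)\Gamma((n-1)\alpha+1)/\Gamma(n\alpha+1)$ yields $S_n(t) \le \big(4C\Gamma(\alpha)t^\alpha\big)^n / \Gamma(n\alpha+1)$ for every $n \ge 0$, so
\[
    \sup_{z > 0} \E\b{\p{\frac{u(z,t)}{u_0(z,t)}}^2} \le \sum_{n=0}^\infty \frac{\big(4C\Gamma(\alpha)t^\alpha\big)^n}{\Gamma(n\alpha+1)} = E_\alpha\!\p{4C\Gamma(\alpha)t^\alpha} =: C_{t,\beta},
\]
where $E_\alpha(x) := \sum_{n\ge0} x^n/\Gamma(n\alpha+1)$ is the Mittag--Leffler function; because $\Gamma(n\alpha+1)^{1/n}\to\infty$ this power series has infinite radius of convergence, so $C_{t,\beta} < \infty$ for all $t \in [0,1)$. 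This establishes $(\ref{eqn:spatialRatioBdSTWN})$ for any $T \in (0,1)$, with $T$ and $C_{t,\beta}$ both explicit. For $(\ref{eqn:temporalRatioBdSTWN})$: $\sum_{n\ge1} R_n(z,t) \le \sum_{n\ge1} S_n(t) \le E_\alpha(4C\Gamma(\alpha)t^\alpha) - 1$, and since $S_n(t) \le S_n(T)$ with $\sum_n S_n(T) < \infty$ and each $S_n(t) = O(t^{n\alpha}) \to 0$, dominated convergence over $n$ forces $\sum_{n\ge1} R_n(z,t) \to 0$ as $t \searrow 0$ — in fact uniformly in $z$, which is stronger than stated.

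The only genuinely hard estimate — the uniform-in-$z$, near-boundary control of $K(z,\tau;2\beta)$, with precisely the time-singularity $(t-\tau)^{\alpha-1}$ that keeps the $\tau$-integral convergent — is already packaged in Lemma~\ref{lem:u1RatioBdSpatialSTWN}, so I expect the rest to be bookkeeping. I anticipate two delicate points. First, the per-level bound must be \emph{uniform} in $z$ for the $S_n$-recursion to close; this is exactly what the factor $(\hat z \vee t)^{2\beta-\alpha} \le 1$ provides, using $\hat z \le 1$ together with $t < 1$, and it is the reason the statement is confined to $T \in (0,1)$. Second, one must recognize the fractional-integral structure so that the series sums to a convergent (Mittag--Leffler) expression rather than naively to a geometric series carrying a radius-of-convergence obstruction — the same subtlety flagged in Remark~\ref{rem:geometricFails} for the colored-noise case.
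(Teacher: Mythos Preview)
Your argument is correct and rests on the same key estimate as the paper --- the pointwise bound on $K(z,\tau;2\beta)$ from Lemma~\ref{lem:u1RatioBdSpatialSTWN} --- but the bookkeeping differs in two noteworthy ways. First, the paper carries the factor $(\hat z\vee t)^{2\beta-\alpha}$ through the induction, which forces them to split the inner $w$-integral at $w=\tau$ and invoke the monotonicity \eqref{eqn:monotonicityKInBeta} to collapse both pieces back to $K(z,\tau;2\beta)$; you sidestep this entirely by passing to the supremum $S_{n-1}(\tau)$ before integrating in $w$, which is cleaner. Second, the paper bounds $\int_0^t \tau^{(n-1)\alpha}(t-\tau)^{\alpha-1}\,\mathrm d\tau$ crudely by $t^{n\alpha}/\alpha$, arriving at a geometric series $\sum_n (C_{\alpha,\beta}t^\alpha)^n$ that only converges when $t^\alpha < 1/C_{\alpha,\beta}$ --- this is what actually \emph{determines} their $T$. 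You instead keep the exact Beta factor $\Gamma(\alpha)\Gamma((n-1)\alpha+1)/\Gamma(n\alpha+1)$, so your series sums to the entire function $E_\alpha(4C\Gamma(\alpha)t^\alpha)$ and converges for every $t\in(0,1)$; the restriction $T<1$ in your version comes solely from the need for $(\hat z\vee t)^{2\beta-\alpha}\le 1$. What the paper's route buys is an explicit $z$-dependent per-level bound \eqref{eqn:degenUnRatioBd} before taking the supremum --- but since that factor is ultimately bounded by $1$ to obtain \eqref{eqn:spatialRatioBdSTWN} anyway, your streamlining loses nothing for the theorem as stated. Your closing remark about Remark~\ref{rem:geometricFails} is slightly misplaced (that remark concerns the colored-noise case, where the obstruction is genuine), but the underlying point --- that tracking the Gamma factors avoids an artificial smallness constraint on $T$ --- is well taken.
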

\begin{proof}
    In light of (\ref{eqn:ratioDefn}) we proceed by induction on $n \geq 1$ to show that for $\beta, z, t > 0$ and $0 < \alpha \leq 2 \beta \wedge 1/2$,
    \begin{equation}
        \label{eqn:degenUnRatioBd}
        \E\b{\p{\frac{u_n(z,t)}{u_0(z,t)}}^2} \leq C_{\alpha,\beta}^n t^{n\alpha}(\hat z\vee t)^{(2\beta-\alpha)}.
    \end{equation}
    The base case is given by Corollary \ref{cor:degenU1RatioBd}.
    Next, assume that (\ref{eqn:degenUnRatioBd}) holds for $n - 1$ with $n \geq 2$.
    Then by using (\ref{eqn:degenUnRatioBd}) and (\ref{eqn:monotonicityKInBeta}), we have
    \begin{align*}
        \E\b{\p{\frac{u_n(z,t)}{u_0(z,t)}}^2} &= \frac{\int_0^t \int_0^\infty \hat w^{2\beta} q_0^2(z,w,t-\tau)\E[u_{n-1}^2(w,\tau)]\mathrm dw \mathrm d\tau}{u_0^2(z, t)} \\
        &= \frac{\int_0^t \p{\int_0^\tau + \int_\tau^\infty}\hat w^{2\beta} q_0^2(z,w,t-\tau)\E[u_{n-1}^2(w,\tau)]\mathrm dw \mathrm d\tau}{u_0^2(z, t)} \\
        &\leq C_{\alpha,\beta}^{n-1} \int_0^t \tau^{(n-1)\alpha + 2\beta - \alpha}\int_0^\tau \hat w^{2\beta} q_0^2(z,w,t-\tau)\frac{u_0^2(w,\tau)}{u_0^2(z,t)}\mathrm dw\mathrm d\tau \\
        &+ C_{\alpha,\beta}^{n-1} \int_0^t \tau^{(n-1)\alpha} \int_\tau^\infty \hat w^{4\beta-\alpha}q_0^2(z,w,t-\tau)\frac{u_0^2(w,\tau)}{u_0^2(z,t)}\mathrm dw\mathrm d\tau\\
        &\leq C_{\alpha,\beta}^{n-1}\int_0^t \tau^{(n-1)\alpha}K(z, \tau; 2\beta)\mathrm d\tau \\
        &\leq 4C C_{\alpha,\beta}^{n-1} (\hat z \vee t)^{2\beta-\alpha} \int_0^t \frac{\tau^{(n-1)\alpha}}{(t-\tau)^{1-\alpha}}\mathrm d\tau \\
        &\leq C_{\alpha,\beta}^n t^{n\alpha} (\hat z \vee t)^{2\beta-\alpha},
    \end{align*}
    as desired.
    Hence (\ref{eqn:degenUnRatioBd}) holds for $n \geq 1$.
    Therefore, whenever $C_{\alpha, \beta} t^\alpha < 1$, by substituting (\ref{eqn:degenUnRatioBd}) into (\ref{eqn:ratioDefn}), 
    \begin{equation}
        \label{eqn:degenURatioBd}
         \E\b{\p{\frac{u(z,t)}{u_0(z,t)}}^2} \leq \sum_{n=0}^\infty C_{\alpha, \beta}^n t^{n\alpha}(\hat z \vee t)^{2\beta-\alpha} = \frac{(\hat z \vee t)^{2\beta-\alpha}}{1 - C_{\alpha, \beta}t^\alpha}.
    \end{equation}

    We continue to prove (\ref{eqn:spatialRatioBdSTWN}) and (\ref{eqn:temporalRatioBdSTWN}).
    Let $\alpha = 2\beta \wedge 1/2$ and $T > 0$ such that $C_{\alpha, \beta}T^\alpha < 1$.
    From (\ref{eqn:degenURatioBd}) it follows that for every $t \in [0, T]$,
    \[
        \sup_{z > 0} \E\b{\p{\frac{u(z,t)}{u_0(z,t)}}^2} \leq \frac{(1 \vee t)^{2\beta-\alpha}}{1 - C_{\alpha, \beta}t^\alpha} =: C_{t, \beta},
    \]
    which is precisely (\ref{eqn:spatialRatioBdSTWN}).
    On the other hand, for any $z > 0$ fixed,
    \[
        \lim_{t \searrow 0} \E\b{\p{\frac{u(z,t)}{u_0(z,t)} - 1}^2} = \lim_{t \searrow 0} \sum_{n=1}^\infty \E\b{\p{\frac{u_n(z,t)}{u_0(z,t)}}^2} \leq \lim_{t \searrow 0}\frac{C_{\alpha, \beta}t^\alpha (\hat z \vee t)^{2\beta-\alpha}}{1 - C_{\alpha, \beta}t^\alpha} = 0, 
    \]
    from which (\ref{eqn:temporalRatioBdSTWN}) follows.
\end{proof}

\subsubsection{Colored Noise}
\label{ssec:ratioColored}

As before, we seek to bootstrap the results from the case of space-time white noise.
Fortunately, the same techniques we developed in $\mathsection \ref{sec:eu}$ are still applicable to us now.
Once again, we build to the main result, Theorem \ref{thm:ratioBdGen}, by first treating the case $n = 1$. 

\begin{lemma}
    \label{lem:coloredDegenU1RatioBd}
    Let $z_1, z_2, t_1, t_2 > 0$ and let $\beta, \varepsilon > 0$, then
    \begin{align}
        \raisetag{-6.5mm} \hspace{10mm}\label{eqn:coloredDegenU1RatioBd}
        \E\b{\frac{u_1(z_1,t_1)u_1(z_2,t_2)}{u_0(z_1,t_1)u_0(z_2,t_2)}} \leq \Gamma_{t_1 \vee t_2} (F_\varepsilon + f(\varepsilon))C_{\alpha, \beta} \sqrt{t_1^\alpha t_2^\alpha (\hat z_1 \vee t_1)^{2\beta-\alpha}(\hat z_2 \vee t_2)^{2\beta-\alpha}},
    \end{align}
    where $0 < \alpha \leq 2\beta \wedge 1/2$ and $C_{\alpha, \beta}$ may be greater than in Corollary \ref{cor:degenU1RatioBd}.
\end{lemma}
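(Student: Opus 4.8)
The statement is the colored-noise counterpart of Corollary \ref{cor:degenU1RatioBd}, and the plan is to run the architecture of the proof of Lemma \ref{lem:u1BdColored}, with the deterministic bounds used there replaced by the sharper boundary estimates of Lemma \ref{lem:u1RatioBdSpatialSTWN} and Remark \ref{rem:offDiagonalRatioDefnSpatial}. First I would record the cross-moment: for the degenerate equation (\ref{eqn:SKE2}) the first Wiener chaos kernel is $f_1(w,\tau;z,t)=\ind{(0,t)}(\tau)q_0(z,w,t-\tau)\hat w^\beta u_0(w,\tau)$, so by the isometry (\ref{eqn:skorohodIntIsometry}), $\E\b{u_1(z_1,t_1)u_1(z_2,t_2)}$ is the cross-moment analogue of (\ref{eqn:recursiveDegenNormGen}) at $n=1$, i.e. $\int_{[0,t_1]\times[0,t_2]}\gamma(r-s)\int_{\R_+^2}f(x-y)\hat x^\beta\hat y^\beta q_0(z_1,x,t_1-r)q_0(z_2,y,t_2-s)u_0(x,r)u_0(y,s)\,\mathrm dx\mathrm dy\mathrm dr\mathrm ds$. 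Dividing by $u_0(z_1,t_1)u_0(z_2,t_2)$ puts the spatial integrand in the form $f(x-y)g_1(x)g_2(y)$, where $g_1(w):=\hat w^\beta q_0(z_1,w,t_1-r)u_0(w,r)/u_0(z_1,t_1)$ and $g_2$ is the analogous function in $(z_2,s,t_2)$; by construction $\|g_1\|_{L^2(\R_+)}^2=K(z_1,r;2\beta)$ and $\int_0^\infty g_1(w)\,\mathrm dw=\tilde K(z_1,r;\beta)$, in the notation of Lemma \ref{lem:u1RatioBdSpatialSTWN} and Remark \ref{rem:offDiagonalRatioDefnSpatial}.

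Next I would split $\R_+^2=A_\varepsilon\cup A_\varepsilon^c$, exactly as in the proof of Lemma \ref{lem:u1BdColored}. On $A_\varepsilon$: applying Proposition \ref{prop:modifiedYoung} in the spatial variables extracts $F_\varepsilon$ and leaves $\sqrt{K(z_1,r;2\beta)K(z_2,s;2\beta)}$, and then a Cauchy--Schwarz step against $\gamma(r-s)\,\mathrm dr\,\mathrm ds$ together with $\int_0^{t_2}\gamma(r-s)\,\mathrm ds\le\Gamma_{t_1\vee t_2}$ bounds the time integral by $\Gamma_{t_1\vee t_2}\big(\int_0^{t_1}K(z_1,r;2\beta)\,\mathrm dr\big)^{1/2}\big(\int_0^{t_2}K(z_2,s;2\beta)\,\mathrm ds\big)^{1/2}$. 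Since $\int_0^{t_1}K(z_1,r;2\beta)\,\mathrm dr=\E\b{\p{u_1(z_1,t_1)/u_0(z_1,t_1)}^2}\le C_{\alpha,\beta}t_1^\alpha(\hat z_1\vee t_1)^{2\beta-\alpha}$ by Corollary \ref{cor:degenU1RatioBd} (equivalently, by integrating Lemma \ref{lem:u1RatioBdSpatialSTWN} over $r\in[0,t_1]$), the $A_\varepsilon$-contribution is at most $\Gamma_{t_1\vee t_2}F_\varepsilon C_{\alpha,\beta}\sqrt{t_1^\alpha t_2^\alpha(\hat z_1\vee t_1)^{2\beta-\alpha}(\hat z_2\vee t_2)^{2\beta-\alpha}}$.

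On $A_\varepsilon^c$: condition (\textbf{i}) gives $f(x-y)\le f(\varepsilon)$, the spatial integral factors, and Remark \ref{rem:offDiagonalRatioDefnSpatial} bounds each $\tilde K(z_i,\cdot;\beta)$ by $4C(\hat z_i\vee t_i)^\beta$; the leftover $\int_{[0,t_1]\times[0,t_2]}\gamma(r-s)\,\mathrm dr\,\mathrm ds\le\Gamma_{t_1\vee t_2}\sqrt{t_1t_2}$ (Proposition \ref{prop:modifiedYoung}, as in the $L_{2,0}$ step of Lemma \ref{lem:u1BdColored}) then gives the bound $f(\varepsilon)(4C)^2\Gamma_{t_1\vee t_2}(\hat z_1\vee t_1)^\beta(\hat z_2\vee t_2)^\beta\sqrt{t_1t_2}$. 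Adding the two contributions, and in the last step using $t_1,t_2\le1$ and $\hat z_i\le1$ to absorb $\sqrt{t_1t_2}(\hat z_1\vee t_1)^\beta(\hat z_2\vee t_2)^\beta\le\sqrt{t_1^\alpha t_2^\alpha(\hat z_1\vee t_1)^{2\beta-\alpha}(\hat z_2\vee t_2)^{2\beta-\alpha}}$ (valid since $\alpha\le1/2\le1$ and all bases are $\le1$) and enlarging $C_{\alpha,\beta}$ to also dominate $(4C)^2$, I would obtain (\ref{eqn:coloredDegenU1RatioBd}).

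The one genuinely delicate point is this last piece of $A_\varepsilon^c$-bookkeeping: the off-diagonal estimate naturally yields the exponent $\beta$ on each $\hat z_i\vee t_i$ with a companion factor $\sqrt{t_1t_2}$, whereas the target has exponent $\beta-\alpha/2$ with companion $\sqrt{t_1^\alpha t_2^\alpha}$, so matching them genuinely uses that the time variables sit in a bounded interval (the regime relevant to the boundary analysis); this, together with absorbing $(4C)^2$, is why $C_{\alpha,\beta}$ is taken larger than in Corollary \ref{cor:degenU1RatioBd}. Everything else is a faithful rerun of the $\mathsection\ref{sec:eu}$ and $\mathsection\ref{ssec:ratioSTWN}$ arguments, and — the key feature — all the estimates stay uniform in $z_1,z_2$ precisely because the inputs drawn from Corollary \ref{cor:degenU1RatioBd} and Remark \ref{rem:offDiagonalRatioDefnSpatial} already are.
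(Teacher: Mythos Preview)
Your proposal is correct and matches the paper's proof essentially step for step: the paper also writes the cross-moment, splits $\R_+^2=A_\varepsilon\cup A_\varepsilon^c$, applies Proposition~\ref{prop:modifiedYoung} on $A_\varepsilon$ to reduce to $\sqrt{\int_0^{t_1}K(z_1,r;2\beta)\,\mathrm dr\int_0^{t_2}K(z_2,s;2\beta)\,\mathrm ds}$ and invokes Corollary~\ref{cor:degenU1RatioBd}, and on $A_\varepsilon^c$ pulls out $f(\varepsilon)$, uses Remark~\ref{rem:offDiagonalRatioDefnSpatial} to bound $\tilde K$, and then absorbs $\sqrt{t_1t_2}(\hat z_1\vee t_1)^\beta(\hat z_2\vee t_2)^\beta$ into the target form using $\hat z_i,t_i<1$. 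The only cosmetic difference is that on $A_\varepsilon^c$ the paper first applies Young's inequality in time and then bounds $\tilde K$ uniformly (yielding the same outcome as your order), and your explicit identification of the $t_i\le 1$ hypothesis needed for the final absorption is exactly what the paper records in one line.
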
 
\begin{proof}
    We define
    \begin{align*}
        V_1 &:= \int_{[0,t_1]\times[0,t_2]} \hspace{-13mm} \gamma(r-s)\int_{A_\varepsilon} \hat x^\beta \hat y^\beta f(x-y)q_0(z_1,x,t_1-r)q_0(z_2,y,t_2-s) \frac{u_0(x,r)u_0(y,s)}{u_0(z_1,t_1)u_0(z_2,t_2)}\mathrm dx\mathrm dy\mathrm dr\mathrm ds,\\
        V_2 &:= \int_{[0,t_1]\times[0,t_2]} \hspace{-13mm} \gamma(r-s)\int_{A_\varepsilon^c} \hat x^\beta \hat y^\beta f(x-y)q_0(z_1,x,t_1-r)q_0(z_2,y,t_2-s) \frac{u_0(x,r)u_0(y,s)}{u_0(z_1,t_1)u_0(z_2,t_2)}\mathrm dx\mathrm dy\mathrm dr\mathrm ds,
    \end{align*}
    and treat each term separately.
    First, applying Proposition \ref{prop:modifiedYoung} and Lemma \ref{lem:u1RatioBdSpatialSTWN}, we see
    \begin{align}
        V_1 &\leq \Gamma_{t_1 \vee t_2} F_\varepsilon \sqrt{\int_0^{t_1} K(z_1, r, 2\beta)\mathrm dr \int_0^{t_2} K(z_2, s, 2\beta)\mathrm dr} \notag \\
        &\leq \Gamma_{t_1 \vee t_2 }F_\varepsilon C_{\alpha, \beta} \sqrt{t_1^\alpha t_2^\alpha (\hat z_1 \vee t_1)^{2\beta-\alpha}(\hat z_2 \vee t_2)^{2\beta-\alpha}}, \label{eqn:coloredDegenU1RatioBd1}
     \end{align}
    for $K$ as in (\ref{eqn:u1RatioDefnSpatialSTWN}).
    For the second term, by Proposition \ref{prop:modifiedYoung} and Remark \ref{rem:offDiagonalRatioDefnSpatial}, we see
    \begin{align}
        V_2 &\leq \Gamma_{t_1 \vee t_2} f(\varepsilon) \sqrt{\int_0^{t_1} \tilde K(z_1, r, \beta)^2\mathrm dr \int_0^{t_2} \tilde K(z_2, s, \beta)^2\mathrm ds} \notag \\
        &\leq \Gamma_{t_1 \vee t_2} f(\varepsilon) (4C) \sqrt{t_1 t_2} (\hat z_1 \vee t_1)^\beta (\hat z_2 \vee t_2)^\beta \notag \\
        &\leq \Gamma_{t_1 \vee t_2} f(\varepsilon) C_{\alpha, \beta} \sqrt{t_1^\alpha t_2^\alpha (\hat z_1 \vee t_1)^{2\beta-\alpha}(\hat z_2 \vee t_2)^{2\beta-\alpha}}, \label{eqn:coloredDegenU1RatioBd2}
    \end{align}
    where (\ref{eqn:coloredDegenU1RatioBd2}) follows because $\hat z_1, \hat z_2, t_1, t_2 < 1$.
    Combining (\ref{eqn:coloredDegenU1RatioBd1}) and (\ref{eqn:coloredDegenU1RatioBd2}) in turn leads to (\ref{eqn:coloredDegenU1RatioBd}).
\end{proof}

Considering the form of (\ref{eqn:degenUnRatioBd}), it is not hard to see that for $n \geq 1$, we expect 
\begin{align}
    \raisetag{-6.5mm} \hspace{10mm} \label{eqn:coloredDegenUnRatioBd}
    \frac{\E[u_n(z_1,t_1)u_n(z_2,t_2)]}{u_0(z_1,t_1)u_0(z_2,t_2)} \leq \Gamma_{t_1 \vee t_2}^n (F_\varepsilon + f(\varepsilon))^nC_{\alpha,\beta}^n  (t_1 t_2)^{\frac{n\alpha}{2}}((\hat z_1 \vee t_1)(\hat z_2 \vee t_2))^{\beta - \frac{\alpha}{2}}.
\end{align}
    
Before presenting the proof, we first recall Remark \ref{rem:geometricFails}, which motivated the need for a bound better than geometric in case one could not guarantee the existence of $\varepsilon$ such that the concerned series in $\mathsection \ref{ssec:euColored}$ is convergent for all time $t$.
However, the mere issue of convergence will not occur here by virtue of only studying the solution $u$ for small values of $t$. 
Given any value of $\varepsilon > 0$, we may assume that either $t_1$ or $t_2$ is sufficiently small as to ensure summability of (\ref{eqn:coloredDegenUnRatioBd}) over $n \geq 1$.

\begin{theorem}
    \label{thm:ratioBdGen}
    Let $\beta > 0$ and $\varepsilon > 0$.
    Suppose $f$ satisfies conditions (\textbf{i}) and (\textbf{ii}).
    Then there exists $T \in (0,1)$ such that for every $t \in [0,T]$, there exists $C_{t, \beta, \epsilon} > 0$ such that
    \begin{equation}
        \label{eqn:spatialRatioBdColored}
        \sup_{z > 0} \E\b{\p{\frac{u(z,t)}{u_0(z,t)}}^2} < C_{t, \beta, \varepsilon},
    \end{equation}
    where $T$ and $C_{t,\beta,\epsilon}$ can be explicitly determined.
    Furthermore, for every fixed $z > 0$,
    \begin{equation}
        \label{eqn:temporalRatioBdColored}
        \lim_{t \searrow 0} \E\b{\p{\frac{u(z,t)}{u_0(z,t)} - 1}^2} = 0.
    \end{equation}
\end{theorem}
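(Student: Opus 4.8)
The plan is to run the argument in close parallel with the space‑time white noise case (Theorem~\ref{thm:ratioBdSTWN}), but with the $A_\varepsilon$/$A_\varepsilon^c$ decomposition and the modified Young inequality (Proposition~\ref{prop:modifiedYoung}) of $\mathsection\ref{ssec:euColored}$ grafted on, exactly as Lemma~\ref{lem:coloredDegenU1RatioBd} does at level $n=1$. Fix $\beta>0$, $\varepsilon>0$, and $0<\alpha\le 2\beta\wedge\frac12$. The core of the proof is to establish the geometric‑in‑$n$ bound (\ref{eqn:coloredDegenUnRatioBd}), i.e.
\[
    \frac{\E[u_n(z_1,t_1)u_n(z_2,t_2)]}{u_0(z_1,t_1)u_0(z_2,t_2)} \le \Gamma_{t_1\vee t_2}^{\,n}(F_\varepsilon+f(\varepsilon))^{n}C_{\alpha,\beta}^{\,n}(t_1t_2)^{\frac{n\alpha}{2}}\bigl((\hat z_1\vee t_1)(\hat z_2\vee t_2)\bigr)^{\beta-\frac{\alpha}{2}},
\]
for all $z_1,z_2,t_1,t_2>0$, by induction on $n\ge 1$, the base case being Lemma~\ref{lem:coloredDegenU1RatioBd} (after noting $\sqrt{AB}\le$ the geometric mean collapses the two spatial factors into the stated product).

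For the inductive step, assume the bound at level $n-1$. Starting from the recursion (\ref{eqn:recursiveDegenNormGen}), divide by $u_0(z_1,t_1)u_0(z_2,t_2)$, write $\E[u_{n-1}(x,r)u_{n-1}(y,s)] = u_0(x,r)u_0(y,s)\cdot\frac{\E[u_{n-1}(x,r)u_{n-1}(y,s)]}{u_0(x,r)u_0(y,s)}$, apply the inductive hypothesis, and pull out $\Gamma_{r\vee s}^{\,n-1}\le\Gamma_{t_1\vee t_2}^{\,n-1}$ (using that $\Gamma_t=2\int_0^t\gamma$ is nondecreasing). This reduces matters to bounding
\[
    \int_{[0,t_1]\times[0,t_2]}\!\!\!\gamma(r-s)(rs)^{\frac{(n-1)\alpha}{2}}\int_{\R_+^2}\!\!f(x-y)\,\hat x^\beta\hat y^\beta q_0(z_1,x,t_1-r)q_0(z_2,y,t_2-s)\tfrac{(\hat x\vee r)^{\beta-\frac{\alpha}{2}}(\hat y\vee s)^{\beta-\frac{\alpha}{2}}u_0(x,r)u_0(y,s)}{u_0(z_1,t_1)u_0(z_2,t_2)}\,\mathrm dx\,\mathrm dy\,\mathrm dr\,\mathrm ds .
\]
Split the inner integral over $A_\varepsilon\cup A_\varepsilon^c$ as in Lemma~\ref{lem:coloredDegenU1RatioBd}. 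On $A_\varepsilon$, apply Proposition~\ref{prop:modifiedYoung} first in the spatial variables (extracting $F_\varepsilon$) and then in the temporal variables (extracting $\Gamma_{t_1\vee t_2}$); the residual spatial factor is $\int_0^\infty \hat x^{2\beta}q_0^2(z_1,x,t_1-r)(\hat x\vee r)^{2\beta-\alpha}\tfrac{u_0^2(x,r)}{u_0^2(z_1,t_1)}\,\mathrm dx$, which, by splitting at $x=r$ (so that $(\hat x\vee r)^{2\beta-\alpha}$ is either $r^{2\beta-\alpha}\le 1$ or $\hat x^{2\beta-\alpha}$) and using the monotonicity (\ref{eqn:monotonicityKInBeta}), is $\le 2K(z_1,r;2\beta)\le 8C(t_1-r)^{\alpha-1}(\hat z_1\vee t_1)^{2\beta-\alpha}$ via Lemma~\ref{lem:u1RatioBdSpatialSTWN}; the remaining time integral is a Beta integral $\int_0^{t_1}r^{(n-1)\alpha}(t_1-r)^{\alpha-1}\mathrm dr = t_1^{n\alpha}B((n-1)\alpha+1,\alpha)\le t_1^{n\alpha}/\alpha$, which is what produces the $n$‑th power of $C_{\alpha,\beta}$. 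On $A_\varepsilon^c$, use $f(x-y)\le f(\varepsilon)$, let the $x$‑ and $y$‑integrals factor, split each at $x=r$ to reduce the weighted version of $\tilde K(z_1,r;\beta)$ from (\ref{eqn:offDiagonal}) to $2\tilde K(z_1,r;\beta)\le 8C(\hat z_1\vee t_1)^\beta$ (Remark~\ref{rem:offDiagonalRatioDefnSpatial} plus monotonicity), then apply Proposition~\ref{prop:modifiedYoung} once more in time. Adding the two contributions and absorbing all numerical constants ($8C/\alpha$, $64C^2$, the base constant, etc.) into an enlarged $C_{\alpha,\beta}$ — and using $\hat z\vee t\le 1$, $t\le 1$ to trade the harmless higher powers $(\hat z\vee t)^{2\beta-\alpha/2}$, $(t_1t_2)^{((n-1)\alpha+1)/2}$ for the asserted ones — closes the induction. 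I expect this step, in particular the bookkeeping of the spatial weights $(\hat x\vee r)^{\beta-\alpha/2}$ and the time powers $(rs)^{(n-1)\alpha/2}$ through the modified Young inequalities and the $K$/$\tilde K$ estimates, to be the main obstacle.

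With (\ref{eqn:coloredDegenUnRatioBd}) in hand, setting $z_1=z_2=z$ and $t_1=t_2=t$ gives $\E[(u_n(z,t)/u_0(z,t))^2]\le \rho^n(\hat z\vee t)^{2\beta-\alpha}$ with $\rho:=\Gamma_t(F_\varepsilon+f(\varepsilon))C_{\alpha,\beta}t^\alpha$. Since $\varepsilon$ (hence $F_\varepsilon$ and $f(\varepsilon)$) is fixed and $\Gamma_t t^\alpha\to 0$ as $t\searrow 0$ (local integrability of $\gamma$), there is $T\in(0,1)$ such that $\rho<1$ for all $t\le T$; summing the geometric series as in (\ref{eqn:degenURatioBd}) yields $\E[(u(z,t)/u_0(z,t))^2]\le 1+\frac{\rho}{1-\rho}(\hat z\vee t)^{2\beta-\alpha}$. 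Taking the supremum over $z>0$ and using $(\hat z\vee t)^{2\beta-\alpha}\le(1\vee t)^{2\beta-\alpha}=1$ (valid since $2\beta-\alpha\ge 0$ and $t\le T<1$) gives (\ref{eqn:spatialRatioBdColored}) with $C_{t,\beta,\varepsilon}:=\frac{1}{1-\rho}$, explicitly determined. For (\ref{eqn:temporalRatioBdColored}), by orthogonality of the Wiener chaoses $\E[(u(z,t)/u_0(z,t)-1)^2]=\sum_{n\ge 1}\E[(u_n(z,t)/u_0(z,t))^2]\le\frac{\rho}{1-\rho}(\hat z\vee t)^{2\beta-\alpha}$, and for fixed $z$ this tends to $0$ as $t\searrow 0$ because $\rho\to 0$ while $(\hat z\vee t)^{2\beta-\alpha}$ stays bounded. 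The remaining assertions (existence, uniqueness, joint measurability of the solution to (\ref{eqn:SKE2})) follow verbatim as in Theorems~\ref{thm:euSTWN} and \ref{thm:euColored}.
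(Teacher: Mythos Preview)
Your proposal is correct and follows essentially the same route as the paper: induction on $n$ to establish (\ref{eqn:coloredDegenUnRatioBd}) via the $A_\varepsilon$/$A_\varepsilon^c$ split, reduction to $K$ and $\tilde K$ through Proposition~\ref{prop:modifiedYoung} and the monotonicity (\ref{eqn:monotonicityKInBeta}), followed by summing the resulting geometric series. The only cosmetic differences are that the paper bounds $(rs)^{(n-1)\alpha/2}\le(t_1t_2)^{(n-1)\alpha/2}$ at the outset (so the remaining integral is exactly $\tilde V_1+\tilde V_2$, handled verbatim as in Lemma~\ref{lem:coloredDegenU1RatioBd}) rather than carrying the time power through to a Beta integral, and it dispatches the weight $(\hat x\vee r)^{\beta-\alpha/2}$ directly via (\ref{eqn:monotonicityKInBeta}) instead of your explicit split at $x=r$; neither difference is substantive.
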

\begin{proof}
    We begin by proving (\ref{eqn:coloredDegenUnRatioBd}) by induction on $n \geq 1$.
    The base case is given by Lemma \ref{lem:coloredDegenU1RatioBd}.
    Now assume that (\ref{eqn:coloredDegenUnRatioBd}) holds for $n - 1$ with $n \geq 2$.
    Define
    \begin{align*}
        \tilde V_1 &:= \int_{[0,t_1]\times[0,t_2]} \hspace{-13mm} \gamma(r-s)\int_{A_\varepsilon} \hspace{-3mm} \hat x^\beta \hat y^\beta f(x-y) q_0(z_1,x,t_1-r)q_0(z_2,y,t_2-s) \\
        &\hspace{6mm} \cdot (\hat x \vee r)^{\beta-\frac{\alpha}{2}}(\hat y \vee s)^{\beta-\frac{\alpha}{2}}\frac{u_0(x,r)u_0(y,s)}{u_0(z_1,t_1)u_0(z_2,t_2)}\mathrm dx\mathrm dy\mathrm dr\mathrm ds\\
        \tilde V_2 &:= \int_{[0,t_1]\times[0,t_2]} \hspace{-13mm} \gamma(r-s)\int_{A_\varepsilon^c} \hspace{-3mm} \hat x^\beta \hat y^\beta f(x-y) q_0(z_1,x,t_1-r)q_0(z_2,y,t_2-s) \\
        &\hspace{6mm}\cdot  (\hat x \vee r)^{\beta-\frac{\alpha}{2}}(\hat y \vee s)^{\beta-\frac{\alpha}{2}} \frac{u_0(x,r)u_0(y,s)}{u_0(z_1,t_1)u_0(z_2,t_2)}\mathrm dx\mathrm dy\mathrm dr\mathrm ds.
    \end{align*}
    We start by examining $\tilde V_1$.
    Applying Proposition \ref{prop:modifiedYoung} and (\ref{eqn:monotonicityKInBeta}), then following the proof for (\ref{eqn:coloredDegenU1RatioBd1}), we see
    \begin{align*}
        \tilde V_1 &\leq \Gamma_{t_1 \vee t_2} F_\varepsilon \sqrt{\int_0^{t_1} K(z_1, r, 2\beta)\mathrm dr \int_0^{t_2} K(z_2, s, 2\beta)\mathrm ds} \\
        &\leq \Gamma_{t_1 \vee t_2} F_\varepsilon C_{\alpha, \beta} \sqrt{t_1^\alpha t_2^\alpha(\hat z_1 \vee t_1)^{2\beta-\alpha}(\hat z_2 \vee t_2)^{2\beta-\alpha}}.
    \end{align*}
    Likewise, again by Proposition \ref{prop:modifiedYoung} and (\ref{eqn:monotonicityKInBeta}), then following the proof for (\ref{eqn:coloredDegenU1RatioBd2}),
    \[
        \tilde V_2 \leq \Gamma_{t_1 \vee t_2} f(\varepsilon) C_{\alpha, \beta} \sqrt{t_1^\alpha t_2^\alpha(\hat z_1 \vee t_1)^{2\beta-\alpha}(\hat z_2 \vee t_2)^{2\beta-\alpha}}.
    \]
    Putting it all together with the inductive assumption, we obtain
    \begin{align*}
        \frac{\E[u_n(z_1,t_1)u_n(z_2,t_2)]}{u_0(z_1,t_1)u_0(z_2,t_2)} &\leq \Gamma_{t_1 \vee t_2}^{n-1}\p{F_\varepsilon + f(\varepsilon)}^{n-1}C_{\alpha,\beta}^{n-1}(t_1t_2)^{\frac{(n-1)\alpha}{2}}\p{\tilde V_1 + \tilde V_2}\\
        &\leq \Gamma_{t_1 \vee t_2}^n (F_\varepsilon + f(\varepsilon))^nC_{\alpha,\beta}^n \p{t_1t_2}^{\frac{n\alpha}{2}}\p{(\hat z_1 \vee t_1)(\hat z_2 \vee t_2)}^{\beta-\frac{\alpha}{2}},
    \end{align*}
    as claimed.

    Next, given $\varepsilon$ and $\alpha$, choose $t > 0$ such that $\Gamma_t(F_\varepsilon + f(\varepsilon))C_{\alpha,\beta}t^\alpha < 1$.
    Then by (\ref{eqn:ratioDefn}) and (\ref{eqn:coloredDegenUnRatioBd}),
    \begin{align}
        \E\b{\p{\frac{u(z,t)}{u_0(z,t)}}^2} &\leq \sum_{n=0}^\infty \Gamma_t^n(F_\varepsilon + f(\varepsilon))^nC_{\alpha,\beta}^nt^{n\alpha} (\hat z \vee t)^{2\beta-\alpha}, \notag \\
        \label{eqn:coloredDegenURatioBd} &= \frac{(\hat z \vee t)^{2\beta-\alpha}}{1-\Gamma_t(F_\varepsilon + f(\varepsilon))C_{\alpha,\beta}t^\alpha}.
    \end{align}

    We continue to prove (\ref{eqn:spatialRatioBdColored}) and (\ref{eqn:temporalRatioBdColored}).
    First, let $\alpha = 2\beta \wedge 1/2$ and $T > 0$ be such that $\Gamma_T(F_\varepsilon + f(\varepsilon))C_{\alpha,\beta}T^\alpha < 1$. For all $t \in [0,T]$, from (\ref{eqn:coloredDegenURatioBd}) it follows that
    \[
        \sup_{z > 0} \E\b{\p{\frac{u(z,t)}{u_0(z,t)}}^2} \leq \frac{(1 \vee t)^{2\beta-\alpha}}{1-\Gamma_t(F_\varepsilon + f(\varepsilon))C_{\alpha,\beta}t^\alpha} =: C_{t, \beta, \varepsilon},
    \]
    which is precisely (\ref{eqn:spatialRatioBdColored}).
    On the other hand, for any $z > 0$ fixed,
    \[
        \lim_{t \searrow 0} \E\b{\p{\frac{u(z,t)}{u_0(z,t)} - 1}^2} \leq \lim_{t \searrow 0} \frac{\Gamma_t (F_\varepsilon + f(\varepsilon))C_{\alpha,\beta}t^\alpha(1 \vee t)^{2\beta-\alpha}}{1-\Gamma_t(F_\varepsilon + f(\varepsilon))C_{\alpha,\beta}t^\alpha} = 0,
    \]
    from which (\ref{eqn:temporalRatioBdColored}) follows.
\end{proof}
\subsection{$L^p-$ Bounds}
\label{sec:moment}

In $\mathsection\ref{sec:eu}$, we presented the bounds (\ref{eqn:unUnifL2BdSTWN}) and (\ref{eqn:unifL2BdSTWN}), and (\ref{eqn:unBdColored}) and (\ref{eqn:unifL2BdColored}) for the $L^2-$ norms of $u_n(z, t)$ and $u(z, t)$ in the space-time white noise and colored noise cases, respectively.
Naturally, one might ask about bounds for general $L^p-$ norms with $p \geq 2$.
However, unlike for the space $L^2(\Omega)$, there is no isometry or expansion property that allows us to directly treat
higher moments.
Nevertheless, for a solution $u$ to either (\ref{eqn:SKE1}) or (\ref{eqn:SKE2}) with Wiener chaos expansion given by (\ref{eqn:solnWienerChaosExpansion}), (\ref{eqn:lpInequality}) allows us to return to the treatment of the $L^2$-norm at the cost of an additional geometric term depending on $p$.

Unfortunately, while the methods developed in $\mathsection\ref{sec:eu}$ give $u_n(z, t) \in L^p(\Omega)$ for all $z, t > 0$ and $p \geq 2$, they do not produce sufficiently sharp bounds to guarantee convergence of the series in (\ref{eqn:solnWienerChaosExpansion}) to $u(z, t)$ in $L^p(\Omega)$ for $p \geq 2$.
For example, in the case of space-time white noise, directly applying the bound (\ref{eqn:unUnifL2BdSTWN}) with (\ref{eqn:lpInequality}) and (\ref{eqn:solnWienerChaosExpansion}) gives
\begin{equation}
    \label{eqn:unrefinedBdsFail}
    \sum_{n=0}^\infty \E\b{|u_n(z,t)|^p}^{\frac{1}{p}} \leq \sum_{n=0}^\infty \p{\frac{p-1}{2}}^{\frac{n}{2}},
\end{equation}
for $z, t > 0$.
Notice that the right-hand side of (\ref{eqn:unrefinedBdsFail}) fails to converge for $p \geq 3$.
As it turns out, the condition $\beta \geq 1/4$ is sufficient to obtain results on $L^p$-integrability for all $p \geq 2$.
To this end, we need to refine the bounds developed in $\mathsection\ref{sec:eu}$.

\subsubsection{Space-Time White Noise}
\label{ssec:momentSTWN}

We begin by re-examining our bound for $\E[u_1^2(z, t)]$ in the case of space-time white noise.
That is, recall that by Lemma \ref{lem:fsSqInt} we have
\[
    \E[u_1^2(z, t)] \leq U\p{\frac{z}{t}} = e^{\frac{-z}{t}}I_0\p{\frac{z}{t}} - \frac{1}{2}e^{-\frac{2z}{t}} \leq \frac{1}{2}.
\]
While we uniformly bounded the function $U$ for convenience, notice that we in fact have $U(x) = O(x^{-1/2})$.
Thus, it is possible to obtain a tighter estimate of $U$ for large argument values by opting to not bound it by a constant.
We make this notion precise in our first refined result.

\begin{lemma}
    \label{lem:u1BdRefinedSTWN}
    There exists a constant $C > 0$ such that for all $z, t-\tau, \tau > 0$, 
    \begin{equation}
        \label{eqn:u1BdSpatialRefinedSTWN}
        \int_0^\infty q_0^2\left(z,w,t-\tau\right)dw \leq Cz^{-\frac{1}{2}}\left(t-\tau\right)^{-\frac{1}{2}}.
    \end{equation}
    Moreover, for $z, t > 0$
    \begin{equation}
        \label{eqn:u1BdRefinedSTWN}
        \int_0^t \int_0^\infty q_0^2(z,w,t-\tau)\mathrm dw\mathrm d\tau \leq \frac{C\sqrt \pi}{\Gamma\p{\frac{3}{2}}} z^{-\frac{1}{2}}t^{\frac{1}{2}},
    \end{equation}
    where $\Gamma$ denotes the Gamma function.
\end{lemma}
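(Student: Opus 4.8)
The plan is to reduce the spatial estimate \eqref{eqn:u1BdSpatialRefinedSTWN} to a uniform bound on a single Bessel-function expression, and then to deduce \eqref{eqn:u1BdRefinedSTWN} by an elementary integration in time. For the first part I would start from the computation already carried out inside the proof of Lemma \ref{lem:fsSqInt}: comparing the integrands on the two sides of \eqref{eqn:fsSqInt1} shows that for every $s>0$,
\[
    \int_0^\infty q_0^2(z,w,s)\,\mathrm dw = \frac{z e^{-2z/s}}{s^2}\left( e^{z/s}\bigl(\besselI{0}{z/s} - \besselI{1}{z/s}\bigr) - 1\right) = \frac{z}{s^2}\,\psi\!\left(\frac{z}{s}\right),
\]
where $\psi(x) := e^{-x}\bigl(\besselI{0}{x} - \besselI{1}{x}\bigr) - e^{-2x}$ for $x>0$ (equivalently $\psi = -U'$, with $U$ as in \eqref{eqn:fsSqIntDefn}, which also follows by differentiating \eqref{eqn:fsSqIntCalc} in $s$). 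Writing $x=z/s$ and noting $z/s^2 = x^2/z$ and $z^{-1/2}s^{-1/2}=x^{1/2}/z$, the bound \eqref{eqn:u1BdSpatialRefinedSTWN} is exactly equivalent to the one-variable inequality $\psi(x)\le C\,x^{-3/2}$ for all $x>0$ and a suitable constant $C$.

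To establish this I would set $h(x):=x^{3/2}\psi(x)$ and take $C:=\sup_{x>0}h(x)$, so that everything reduces to showing $h$ is bounded on $(0,\infty)$. The function $h$ is continuous there, and nonnegative because the displayed identity forces $\psi\ge 0$. Since $I_0$ and $I_1$ are entire, $\psi$ is real-analytic with $\psi(0)=I_0(0)-I_1(0)-1=0$, so $h(x)\to 0$ as $x\searrow 0$ and $h$ is bounded near the origin. The only substantive point is the behavior as $x\to\infty$: the dominant terms $e^{x}(2\pi x)^{-1/2}$ of $I_0(x)$ and $I_1(x)$ cancel in the difference, and the standard asymptotics of the modified Bessel functions (see $\mathsection\ref{sec:appendix}$) give
\[
    \besselI{0}{x} - \besselI{1}{x} = \frac{e^{x}}{2\sqrt{2\pi}}\,x^{-3/2}\bigl(1+O(x^{-1})\bigr) \qquad (x\to\infty),
\]
so that $\psi(x)=(2\sqrt{2\pi})^{-1}x^{-3/2}+O(e^{-2x})$ and $h(x)\to(2\sqrt{2\pi})^{-1}$. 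Hence $h$ extends to a continuous function on $[0,\infty]$, is bounded, and \eqref{eqn:u1BdSpatialRefinedSTWN} follows.

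The second part is then immediate: integrating \eqref{eqn:u1BdSpatialRefinedSTWN} over $\tau\in(0,t)$ and using the Beta-type evaluation $\int_0^t (t-\tau)^{-1/2}\,\mathrm d\tau = t^{1/2}\,\Gamma(1)\Gamma(1/2)/\Gamma(3/2)$ gives
\[
    \int_0^t\int_0^\infty q_0^2(z,w,t-\tau)\,\mathrm dw\,\mathrm d\tau \;\le\; C z^{-1/2}\int_0^t (t-\tau)^{-1/2}\,\mathrm d\tau \;=\; \frac{C\sqrt{\pi}}{\Gamma(3/2)}\,z^{-1/2}t^{1/2}.
\]

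The main obstacle is the large-$x$ asymptotic step: one cannot simply use $e^{-x}\besselI{0}{x}=O(x^{-1/2})$ (which is all that was needed after Lemma \ref{lem:fsSqInt}), but must exploit the cancellation in $\besselI{0}{x}-\besselI{1}{x}$ to see the sharper $O(x^{-3/2})$ decay. If one wished to avoid invoking Bessel asymptotics, a self-contained alternative is to estimate $\int_0^\infty q_0^2(z,w,s)\,\mathrm dw$ by splitting the integral at $w=s^2/z$ and applying \eqref{eqn:gaussianBd} on $\{w<s^2/z\}$ and \eqref{eqn:gaussianBdRefined} on $\{w\ge s^2/z\}$ (after the substitution $u=\sqrt w$); the delicate point there is that in the small-$w$ piece the Gaussian factor $e^{-2(\sqrt z-\sqrt w)^2/s}$ must be retained rather than discarded when $z$ is large relative to $s$, after which routine estimates recover the same $z^{-1/2}s^{-1/2}$ bound.
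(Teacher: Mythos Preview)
Your proof is correct and reaches the same conclusion as the paper, but by a genuinely different route. The paper does not pass through the explicit Bessel-function form $\psi(x)=e^{-x}(I_0(x)-I_1(x))-e^{-2x}$; instead it rewrites $\int_0^\infty q_0^2(z,w,s)\,dw$ as a constant multiple of
\[
    \frac{z^2}{s^3}\,e^{-2z/s}\,
    {}_2F_2\!\left[\begin{array}{c}3/2,\;1\\2,\;3\end{array}\Big|\;\frac{2z}{s}\right],
\]
and then invokes the ${}_2F_2$ asymptotics of Proposition~\ref{prop:2F2AsymptoticExpansion} (with $\nu=3/2+1-2-3=-5/2$, so $k=2$ is admissible in \eqref{eqn:2F2UnifBd}) to obtain the $z^{-1/2}s^{-1/2}$ bound in one stroke. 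Your argument avoids the hypergeometric machinery entirely: you extract the closed form already implicit in \eqref{eqn:fsSqInt1}, reduce to boundedness of $h(x)=x^{3/2}\psi(x)$, and verify that by the cancellation in the leading asymptotics of $I_0-I_1$. This is more elementary and self-contained; the paper's route is more systematic, since the same ${}_2F_2$ device is reused in Lemma~\ref{lem:fsHolderCont} and Lemma~\ref{lem:fsDiffsqInt}. One small caveat: the asymptotic expansion of $I_\nu$ you use is not actually stated in \S\ref{sec:appendix}, though it is standard and appears in the cited reference \cite{watson}. The time integration step is identical in both proofs.
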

\begin{proof}
    As in the proof of Lemma \ref{lem:fsSqInt}, we have
    \begin{equation}
        \label{eqn:u1BdRefinedSTWN1}
        \int_{0}^{\infty}q_{0}^{2}\left(z,w,t-\tau\right)dw = \frac{2z}{\left(t-\tau\right)^{2}}e^{-\frac{2z}{t-\tau}}\int_{0}^{\infty}u^{-1}e^{-\frac{\left(t-\tau\right)u^{2}}{2z}}I_{1}^{2}\left(u\right)du.
    \end{equation}
    In fact, the integral in the right-hand side of (\ref{eqn:u1BdRefinedSTWN1}) is a representation of the Confluent Hypergeometric function $_2F_2$ with parameters $a_1 = 3/2, a_2 = 1$ and $b_1 = 2, b_2 = 3$ (see $\mathsection \ref{sec:appendix}$).
    The tail of said function is well-studied; using Proposition \ref{prop:2F2AsymptoticExpansion}, there exists a constant $C > 0$ depending only on $a_1, a_2, b_1, b_2$ such that
    \[
        \int_{0}^{\infty}q_{0}^{2}\left(z,w,t-\tau\right)dw = \frac{2z}{\left(t-\tau\right)^{2}}e^{-\frac{2z}{t-\tau}}\frac{z}{t-\tau} 
        {}_2F_2
        \left[
        \hspace{-1mm}
        \begin{array}{cc}
            \displaystyle \frac{3}{2}, 1  \\
            2, 3
        \end{array} \hspace{-2mm}
        \mathrel{\Bigg|} \frac{2z}{t-\tau}
        \right]
        \leq Cz^{-\frac{1}{2}}\left(t-\tau\right)^{-\frac{1}{2}}.
    \]
    Hence we have (\ref{eqn:u1BdSpatialRefinedSTWN}).
    Simply integrating with respect to $\tau$ then gives (\ref{eqn:u1BdRefinedSTWN}).
\end{proof}

Now consider the stochastic Kimura equation (\ref{eqn:SKE2}) studied in $\mathsection \ref{sec:ratio}$.
With a little more effort, we can refine the estimate for $\E[u_n^2(z,t)]$ by using Lemma \ref{lem:u1BdRefinedSTWN} with the recursive expression (\ref{eqn:recursiveDegenNormGen}) in the case of space-white noise.
As a result, we can continue to show convergence of the series in (\ref{eqn:solnWienerChaosExpansion}) to $u(z,t)$ in $L^p(\Omega)$ for all $p \geq 2$.

\begin{theorem}
    \label{thm:uPthMomentBdSTWN}
    For every $n \geq 1$, $p \geq 2$, $\beta \geq 1/4$, and $z, t > 0$, when $W$ is a space-time white noise, we have
    \begin{equation}
        \label{eqn:unPthMomentBdSTWN}
        \E[|u_n(z,t)|^p]^{1/p} \leq \frac{\p{C(p-1)\sqrt \pi \sqrt t}^{\frac{n}{2}}}{\sqrt{\Gamma\p{\displaystyle \frac{n}{2}+1}} z^{\frac{1}{4}}}.
    \end{equation}
    Moreover, for every $z, t > 0$ and $p \geq 2$, we have $u(z,t) \in L^p(\Omega)$ and the series in (\ref{eqn:solnWienerChaosExpansion}) converges to $u(z, t)$ in $L^p(\Omega)$.
\end{theorem}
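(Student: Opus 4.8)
The plan is to transfer everything to the $L^2$ theory via the hypercontractivity inequality (\ref{eqn:lpInequality}), so the only real work is a sharpened second-moment estimate for the degenerate equation (\ref{eqn:SKE2}). Concretely, I would first establish, by induction on $n \geq 1$ and for $\beta \geq 1/4$, the bound
\[
    \E[u_n^2(z,t)] \leq \frac{\p{C\sqrt\pi\sqrt t}^n}{\Gamma\p{\frac n2 + 1}}\,z^{-\frac12}, \qquad z, t > 0,
\]
with $C$ the constant from Lemma \ref{lem:u1BdRefinedSTWN}. Taking square roots and applying (\ref{eqn:lpInequality}), namely $\E[|u_n(z,t)|^p]^{1/p} \leq (p-1)^{n/2}\E[u_n^2(z,t)]^{1/2}$, then yields exactly (\ref{eqn:unPthMomentBdSTWN}).

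For the base case $n=1$, I would specialize the recursion (\ref{eqn:recursiveDegenNormGen}) to $f=\gamma=\delta_0$, giving $\E[u_1^2(z,t)] = \int_0^t\int_0^\infty \hat w^{2\beta}q_0^2(z,w,t-\tau)u_0^2(w,\tau)\,\mathrm dw\,\mathrm d\tau$, bound $\hat w^{2\beta}\leq 1$ and $u_0\leq 1$ (Lemma \ref{lem:u0Int}), and invoke (\ref{eqn:u1BdRefinedSTWN}); since $\Gamma(3/2)=\sqrt\pi/2$ this is precisely the $n=1$ case. For the inductive step, the white-noise recursion reads $\E[u_n^2(z,t)] = \int_0^t\int_0^\infty \hat w^{2\beta}q_0^2(z,w,t-\tau)\E[u_{n-1}^2(w,\tau)]\,\mathrm dw\,\mathrm d\tau$; I would insert the inductive hypothesis and then use the key elementary observation that $\hat w^{2\beta}w^{-1/2}\leq 1$ for every $w>0$ --- valid precisely because $\hat w = w\wedge 1 \leq 1$ and $2\beta-\tfrac12\geq 0$ when $\beta\geq\tfrac14$. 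This replaces the singular factor in $\E[u_{n-1}^2(w,\tau)]$ by a constant, reduces the inner integral to $\int_0^\infty q_0^2(z,w,t-\tau)\,\mathrm dw \leq Cz^{-1/2}(t-\tau)^{-1/2}$ by (\ref{eqn:u1BdSpatialRefinedSTWN}), and leaves the Beta integral $\int_0^t\tau^{(n-1)/2}(t-\tau)^{-1/2}\,\mathrm d\tau = t^{n/2}\Gamma\p{\tfrac{n+1}{2}}\Gamma\p{\tfrac12}/\Gamma\p{\tfrac n2+1}$. The factors $\Gamma\p{\tfrac{n+1}{2}}$ cancel against the inductive constant and $\Gamma(1/2)=\sqrt\pi$ is absorbed into $C\sqrt\pi$, closing the induction.

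For convergence in $L^p$, I would sum (\ref{eqn:unPthMomentBdSTWN}) over $n$ (the $n=0$ term being the deterministic $u_0(z,t)\leq 1$ by Lemma \ref{lem:u0Int}): because $\Gamma(n/2+1)$ grows faster than any geometric sequence, the ratio test gives $\sum_{n=0}^\infty (C(p-1)\sqrt\pi\sqrt t)^{n/2}/\sqrt{\Gamma(n/2+1)} < \infty$ for all $z,t>0$ and $p\geq 2$. Hence the partial sums $\sum_{n=0}^N u_n(z,t)$ are Cauchy in $L^p(\Omega)$; since $L^p(\Omega)\hookrightarrow L^2(\Omega)$ and these partial sums already converge in $L^2(\Omega)$ to $u(z,t)$ (by the analog of Theorem \ref{thm:euSTWN} for (\ref{eqn:SKE2})), the $L^p$-limit is $u(z,t)$, so $u(z,t)\in L^p(\Omega)$ and the series in (\ref{eqn:solnWienerChaosExpansion}) converges in $L^p(\Omega)$.

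The main obstacle is exactly why $\beta\geq 1/4$ is needed rather than merely $\beta>0$: the refined $L^2$-bound for $q_0^2$ carries an unavoidable boundary singularity $z^{-1/2}$, and without enough degeneracy this $w^{-1/2}$ singularity in $\E[u_{n-1}^2(w,\tau)]$ is fed back through $q_0^2$ and compounds over the chaos hierarchy, destroying summability (as the $p\geq 3$ failure in (\ref{eqn:unrefinedBdsFail}) already warns). The threshold $\beta\geq 1/4$ makes $\hat w^{2\beta}$ just strong enough to neutralize $w^{-1/2}$, and the delicate point is arranging the induction so that it simultaneously preserves the correct $z^{-1/2}$ prefactor \emph{and} a $1/\Gamma(n/2+1)$-type decay (faster than geometric), which is what ultimately secures $L^p$-convergence for every $p$.
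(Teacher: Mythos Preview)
Your proposal is correct and follows essentially the same route as the paper: establish the refined second-moment bound $\E[u_n^2(z,t)] \leq C^n\pi^{n/2}t^{n/2}z^{-1/2}/\Gamma(n/2+1)$ by induction using Lemma~\ref{lem:u1BdRefinedSTWN} and the cancellation $\hat w^{2\beta}w^{-1/2}\leq 1$ for $\beta\geq 1/4$, then lift to $L^p$ via (\ref{eqn:lpInequality}). Your Beta-integral bookkeeping and the identification of the $L^p$-limit with the known $L^2$-limit are exactly what the paper does (the paper additionally records the closed form $\phi(x)=e^{x^2}(1+\mathrm{erf}(x))$ for the resulting series, but this is cosmetic).
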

\begin{proof}
    We first revisit the case $p = 2$.
    For $n \geq 1$ and $\beta \geq 1/4$, we show that for $z, t > 0$,
    \begin{equation}
        \label{eqn:unBdRefinedSTWN}
        \E[u_n^2(z,t)] \leq \frac{C^n \pi^{\frac{n}{2}}}{\Gamma\p{\displaystyle \frac{n}{2}+1}} z^{-\frac{1}{2}}t^{\frac{n}{2}}.
    \end{equation}
    We proceed by induction on $n$.
    The base $n = 1$ follows from Lemma \ref{lem:u1BdRefinedSTWN}.
    Now assume that (\ref{eqn:unBdRefinedSTWN}) holds for $n - 1$ with $n \geq 2$.
    Using (\ref{eqn:recursiveDegenNormGen}) with (formally) $\gamma = f = \delta_0$ together with (\ref{eqn:u1BdSpatialRefinedSTWN}),
    \begin{align*}
        \mathbb{E}\left[u_{n+1}^{2}\left(z,t\right)\right] & =\int_{0}^{t}\int_{0}^{\infty} \hat w^{2\beta} q_{0}^{2}\left(z,w,t-\tau\right)\mathbb{E}\left[u_{n}^{2}\left(w,\tau\right)\right] dwd\tau \\
        & \leq \frac{C^n\pi^{\frac{n}{2}}}{\Gamma\p{\displaystyle \frac{n}{2}+1}} \int_{0}^{t}\int_{0}^{\infty} \hat w^{2\beta} q_{0}^{2}\left(z,w,t-\tau\right)w^{-\frac{1}{2}} dw\tau^{\frac{n}{2}}d\tau\\
        (\beta > 1/4) & \leq \frac{C^n\pi^{\frac{n}{2}}}{\Gamma\p{\displaystyle \frac{n}{2}+1}}\int_{0}^{t}\int_{0}^{\infty}q_{0}^{2}\left(z,w,t-\tau\right)dw\tau^{\frac{n}{2}}d\tau\\
        & \leq \frac{C^{n+1}\pi^{\frac{n}{2}}}{\Gamma\p{\displaystyle \frac{n}{2}+1}}z^{-\frac{1}{2}}\int_{0}^{t}\left(t-\tau\right)^{-\frac{1}{2}}\tau^{\frac{n}{2}}d\tau\\
        &= \frac{C^{n+1}\pi^{\frac{n}{2}}}{\Gamma\p{\displaystyle \frac{n}{2}+1}}z^{-\frac{1}{2}} \frac{\sqrt\pi \Gamma\p{\displaystyle \frac{n}{2}+1}}{\Gamma\p{\displaystyle \frac{n+1}{2}+1}} t^{\frac{n+1}{2}}\\
        &= \frac{C^{n+1}\pi^{\frac{n+1}{2}}}{\Gamma\p{\displaystyle \frac{n+1}{2}+1}}z^{-\frac{1}{2}} t^{\frac{n+1}{2}}.
    \end{align*}
    Therefore (\ref{eqn:unBdRefinedSTWN}) holds for all $n \geq 1$.
    Furthermore, it follows from a simple convergence test that the series
    \begin{equation}
        \sum_{n=0}^\infty \E[u_n^2(z, t)] \leq u_0^2(z,t) + \frac{1}{\sqrt z}\sum_{n=0}^\infty \frac{(C \sqrt \pi \sqrt t)^n}{\Gamma\p{\displaystyle \frac{n}{2}+1}} = u_0^2(z, t) + z^{-\frac{1}{2}}\phi(C \sqrt{\pi t})
    \end{equation}
    converges for any $z, t > 0$, where
    \begin{equation}
        \label{eqn:refinedSeriesValue}
        \phi(x) = e^{x^2}\p{1 + \frac{2}{\sqrt \pi}\int_0^x e^{-s^2}\mathrm ds}.
    \end{equation}

    Continuing to general $p \geq 2$, (\ref{eqn:unPthMomentBdSTWN}) follows immediately by (\ref{eqn:lpInequality}) and (\ref{eqn:unBdRefinedSTWN}).
    Again, by a convergence test, for every $z, t > 0$,
    \[
        \sum_{n=0}^{\infty} \p{\E[|u_n(z,t)^p|]}^{\frac{1}{p}} < \infty,
    \]
    which implies that $u(z,t) \in L^p(\Omega)$ and that $u(z,t) = \sum_{n=0}^\infty u_n(z,t)$ in $L^p(\Omega)$.
\end{proof}

\subsubsection{Colored Noise}
\label{ssec:momentColored}

With $\mathsection \ref{ssec:momentSTWN}$ and $\mathsection \ref{ssec:euColored}$, we almost have all the necessary tools to prove a version of Theorem \ref{thm:uPthMomentBdSTWN} corresponding to colored noise kernels $\gamma$ and $f$.
Notice that by (\ref{eqn:lpInequality}) and (\ref{eqn:unBdColored}),
\begin{align}
    \raisetag{-9.5mm} \hspace{10mm} \label{eqn:uUnrefinedSeriesColored} &\sum_{n=0}^\infty \E[|u_n(z,t)|^p]^{\frac{1}{p}} \leq u_0(z,t) \\
    & + \sum_{n=1}^\infty \p{\frac{\sqrt{p-1}\Gamma_tF_\varepsilon}{2}}^{\frac{n}{2}} \p{1 + \sum_{m=1}^n \sum_{k=0}^{m-1}\binom{m-1}{k}\frac{1}{(k+1)!}\p{\frac{2f(\varepsilon)t}{F_\varepsilon}}^{k+1}}^{\frac{1}{2}}. \notag 
\end{align}
The only problem with convergence stems from the geometric series, while the rest converges for some $\varepsilon > 0$ sufficiently small, depending on $z, t > 0$ and $p \geq 2$.

Recall that this bound was obtained by recursively applying in (\ref{eqn:recursiveDegenNormGen}) the uniform bound in Lemma \ref{lem:fsSqInt}, of which we now have the refined version (\ref{eqn:u1BdRefinedSTWN}).
Hence, we can now incorporate (\ref{eqn:u1BdRefinedSTWN}) to prove convergence of (\ref{eqn:uUnrefinedSeriesColored}).
In addition, we can show that it is possible to obtain convergence for all $z,t > 0$, independent of the choice of $\varepsilon > 0$.

\begin{theorem}
    \label{thm:uPthMomentBdColored}
    For every $n \geq 1$, $p \geq 2$, $\beta \geq 1/4$, and $z, t > 0$, if $f$ satisfies conditions (\textbf{i}) and (\textbf{ii}), then we have
    \begin{align}
        \mathbb{E}\left[\left|u_{n}\left(z,t\right)\right|^{p}\right]^{\frac{1}{p}} &\leq\hat{z}^{-\frac{1}{4}}\frac{\left(\left(p-1\right)\Gamma_{t}F_{\epsilon}C\sqrt{\pi t}\right)^{\frac{n}{2}}}{\sqrt{\Gamma\left(\oldfrac{n}{2}+1\right)}} \notag \\
        &\hspace{15mm} \cdot\left[1+\sum_{m=1}^{n}\sum_{k=0}^{m-1}\binom{m-1}{k}\left(\frac{f\left(\epsilon\right)\sqrt{t}}{F_{\epsilon}C\sqrt{\pi}}\right)^{k+1}\frac{\Gamma\left(\oldfrac{n}{2}+1\right)}{\Gamma\left(\oldfrac{n+k+1}{2}+1\right)}\right]^{\frac{1}{2}}.  \label{eqn:uPthMomentBdColored}
    \end{align}
    Finally, for every $z, t > 0$ we have $u(z,t) \in L^p(\Omega)$ and the series in (\ref{eqn:solnWienerChaosExpansion}) converges to $u(z,t)$ in $L^p(\Omega)$.
\end{theorem}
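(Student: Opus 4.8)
The plan is to prove the $L^2$ estimate (the case $p=2$) first, deduce the general $L^p$ bound (\ref{eqn:uPthMomentBdColored}) from it via the hypercontractivity inequality (\ref{eqn:lpInequality}), and then verify summability of the resulting series. Since the recursion (\ref{eqn:recursiveDegenNormGen}) couples level $n$ to the \emph{off-diagonal} quantity $\E[u_{n-1}(x,r)u_{n-1}(y,s)]$ rather than to $\E[u_{n-1}^2]$, I would carry an off-diagonal bound through the induction, namely
\begin{align*}
    \E[u_n(z_1,t_1)u_n(z_2,t_2)] &\leq (\hat z_1\hat z_2)^{-\frac14}\frac{(\Gamma_{t_1\vee t_2}F_\varepsilon C\sqrt\pi)^n(t_1t_2)^{\frac n4}}{\Gamma\p{\frac n2+1}} \\
    &\qquad\cdot\b{1 + \sum_{m=1}^n\sum_{k=0}^{m-1}\binom{m-1}{k}\p{\frac{f(\varepsilon)}{F_\varepsilon C\sqrt\pi}}^{k+1}(t_1t_2)^{\frac{k+1}{4}}\frac{\Gamma\p{\frac n2+1}}{\Gamma\p{\frac{n+k+1}{2}+1}}},
\end{align*}
which collapses to the $p=2$ case of (\ref{eqn:uPthMomentBdColored}) upon setting $z_1=z_2=z$, $t_1=t_2=t$ and taking a square root, and which recurses through (\ref{eqn:recursiveDegenNormGen}) exactly as (\ref{eqn:recursiveNormGen}) does, by the recursive definition of the kernels $f_n$.

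For the base case $n=1$ I would split $\int_{\R_+^2}f(x-y)\hat x^\beta\hat y^\beta(\cdots)\,\mathrm dx\,\mathrm dy$ over $A_\varepsilon$ and $A_\varepsilon^c$ as in the proof of Lemma \ref{lem:u1BdColored}: on $A_\varepsilon$, two applications of Proposition \ref{prop:modifiedYoung} (once in space, once in time) combined with $\hat x^{2\beta}u_0^2(x,r)\leq 1$ and the \emph{refined} spatial estimate (\ref{eqn:u1BdSpatialRefinedSTWN}) produce $2C\Gamma_{t_1\vee t_2}F_\varepsilon(\hat z_1\hat z_2)^{-\frac14}(t_1t_2)^{\frac14}$; on $A_\varepsilon^c$, the monotonicity of $f$ gives $f(x-y)\leq f(\varepsilon)$ while the mass identity $\int_0^\infty q_0(z,w,\cdot)\,\mathrm dw = u_0(z,\cdot)\leq 1$ from Lemma \ref{lem:u0Int} disposes of the spatial integral, leaving $\Gamma_{t_1\vee t_2}f(\varepsilon)\sqrt{t_1t_2}$. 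For the inductive step I would insert the off-diagonal bound for $n-1$ into (\ref{eqn:recursiveDegenNormGen}), use $\hat x^{\beta-\frac14}\leq 1$ and $\hat y^{\beta-\frac14}\leq 1$ — this is precisely where the hypothesis $\beta\geq1/4$ enters — pull $\Gamma_{r\vee s}^{n-1}\leq\Gamma_{t_1\vee t_2}^{n-1}$, $(F_\varepsilon C\sqrt\pi)^{n-1}$, and $1/\Gamma\p{\frac{n-1}{2}+1}$ out of the integral, and repeat the $A_\varepsilon/A_\varepsilon^c$ split. The decisive point is that, after Proposition \ref{prop:modifiedYoung} has collapsed the two-dimensional $\gamma$-integral into $\Gamma_{t_1\vee t_2}$ times a product of one-dimensional integrals, each surviving time integral is a Beta integral,
\[
    \int_0^t \tau^a(t-\tau)^{-\frac12}\,\mathrm d\tau = \frac{\Gamma(a+1)\Gamma\p{\frac12}}{\Gamma\p{a+\frac32}}t^{a+\frac12}, \qquad \int_0^t \tau^a\,\mathrm d\tau = \frac{\Gamma(a+1)}{\Gamma(a+2)}t^{a+1},
\]
which both advance the power of $t$ and shift the Gamma-denominator, playing exactly the role that Beta integrals play in the proof of Theorem \ref{thm:uPthMomentBdSTWN}; collecting the terms from every branch and regrouping as in the ``regrouping terms'' step of the proof of Theorem \ref{thm:euColored} (the tree of Figure \ref{fig:my_label}) then closes the induction.

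Once the $L^2$ bound is in hand, (\ref{eqn:uPthMomentBdColored}) is immediate from (\ref{eqn:lpInequality}). For the last assertion, I would bound the bracket crudely: since $\Gamma\p{\frac{n+k+1}{2}+1}\geq\Gamma\p{\frac n2+1}$ for all $k\geq0$ and $n\geq1$, and $\sum_{m=1}^n\binom{m-1}{k}=\binom{n}{k+1}$, with $a:=f(\varepsilon)\sqrt t/(F_\varepsilon C\sqrt\pi)$ one gets
\[
    1 + \sum_{m=1}^n\sum_{k=0}^{m-1}\binom{m-1}{k}a^{k+1}\frac{\Gamma\p{\frac n2+1}}{\Gamma\p{\frac{n+k+1}{2}+1}} \;\leq\; 1 + \sum_{m=1}^n a(1+a)^{m-1} = (1+a)^n,
\]
so that
\[
    \sum_{n=0}^\infty \E[|u_n(z,t)|^p]^{\frac1p} \leq u_0(z,t) + \hat z^{-\frac14}\sum_{n=1}^\infty \frac{\p{(p-1)\Gamma_t F_\varepsilon C\sqrt{\pi t}\,(1+a)}^{\frac n2}}{\sqrt{\Gamma\p{\frac n2+1}}},
\]
whose right-hand side converges for \emph{every} $z,t>0$, $p\geq2$ and $\varepsilon>0$ because $\Gamma\p{\frac n2+1}$ outgrows any geometric sequence; by completeness of $L^p(\Omega)$ the series (\ref{eqn:solnWienerChaosExpansion}) then converges in $L^p(\Omega)$, necessarily to its $L^2(\Omega)$-limit $u(z,t)$, so $u(z,t)\in L^p(\Omega)$. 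The principal obstacle is the bookkeeping in the inductive step — keeping the two-dimensional $\gamma$-integral under control via Proposition \ref{prop:modifiedYoung}, tracking the Gamma-denominators produced by the Beta integrals, and checking that the combinatorial tree closes in the off-diagonal variables; a minor technical point is that the constant $C$ in the statement may have to be taken somewhat larger than in Lemma \ref{lem:u1BdRefinedSTWN} in order to absorb the numerical factors (such as $2=\sqrt\pi/\Gamma(3/2)$) that arise along the way.
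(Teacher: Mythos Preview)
Your proposal is correct and follows essentially the same route as the paper: the same off-diagonal inductive claim (identical to the paper's after factoring), the same $A_\varepsilon/A_\varepsilon^c$ split with the refined estimate (\ref{eqn:u1BdSpatialRefinedSTWN}) on $A_\varepsilon$ and the mass bound on $A_\varepsilon^c$, the same use of $\hat x^{\beta-1/4}\le 1$ to absorb the degeneracy, Beta integrals to advance the Gamma denominators, and the same regrouping. The only deviation is in the final summability step: the paper uses the inequality $\Gamma(\frac n2+1)/\Gamma(\frac{n+k+1}{2}+1)\le 2/\Gamma(\frac{k+1}{2})$, whereas you use the cruder bound $\le 1$ together with the telescoping identity $1+\sum_{m=1}^n a(1+a)^{m-1}=(1+a)^n$; your version is valid (Gamma is increasing on $[3/2,\infty)$) and arguably cleaner.
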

\begin{proof}
    Let $z_{1},z_{2},t_{1},t_{2},\epsilon>0$. Note that 
    \[
        \max\p{\left(z_{1}z_{2}\right)^{-\frac{1}{4}}, 1} \leq \left(\hat{z_{1}}\hat{z_{2}}\right)^{-\frac{1}{4}}.
    \]
    We claim that for every $n\geq1$, 
    \begin{align*}
        \mathbb{E} &\left[u_{n}\left(z_{1},t_{1}\right) u_{n}\left(z_{2},t_{2}\right)\right] \leq \left(\hat{z_{1}}\hat{z_{2}}\right)^{-\frac{1}{4}}\left(\Gamma_{t_{1}\vee t_{2}}F_{\epsilon}C\sqrt{\pi}\right)^{n}\\
        & \hspace{25mm} \cdot \left[\oldfrac{\left(t_{1}t_{2}\right)^{\frac{n}{4}}}{\Gamma\left(\oldfrac{n}{2}+1\right)}+\sum_{m=1}^{n}\sum_{k=0}^{m-1}\binom{m-1}{k}\left(\frac{f\left(\epsilon\right)}{F_{\epsilon}C\sqrt{\pi}}\right)^{k+1}\frac{\left(t_{1}t_{2}\right)^{\oldfrac{n+k+1}{4}}}{\Gamma\left(\oldfrac{n+k+1}{2}+1\right)}\right].
    \end{align*}
    First, we examine the base case when $n = 1$:
    \begin{align*}
        &\mathbb{E} \left[u_{1}\left(z_{1},t_{1}\right) u_{1}\left(z_{2},t_{2}\right)\right] \leq \int_{\left[0,t_{1}\right]\times\left[0,t_{2}\right]} \hspace{-14mm}\gamma\left(r-s\right)\left(\int_{A_{\epsilon}}+\int_{A_{\epsilon}^{\complement}}\right) f\left(x-y\right)\hat x^\beta y^\beta \\
        &\hspace{10mm} \cdot q_0(z_1,x,t_1-r)q_0(z_2,y,t_2-s)\E\b{u_{n-1}(x,r)u_{n-1}(y,s)} dxdydrds\\
        & \leq\Gamma_{t_{1}\vee t_{2}}F_{\epsilon}\sqrt{\int_{0}^{t_{1}}\int_{0}^{\infty}\hat x^{2\beta}q_{0}^{2}\left(z_{1},x,t_{1}-r\right)dxdr\int_{0}^{t_{2}}\int_{0}^{\infty}\hat y^{2\beta}q_{0}^{2}\left(z_{2},y,t_{2}-s\right)dyds}\\
        &+ f\left(\epsilon\right) \Gamma_{t_{1}\vee t_{2}}\sqrt{\int_{0}^{t_{1}} \left|\int_{0}^{\infty}\hat x^\beta q_{0}\left(z_{1},x,t_{1}-r\right)dx\right|^{2}\hspace{-2mm}dr \int_{0}^{t_{2}}\left|\int_{0}^{\infty} \hat y^\beta q_{0}\left(z_{2},y,t_{2}-s\right)dy\right|^{2}ds}\\
        & \leq\Gamma_{t_{1}\vee t_{2}}F_{\epsilon}\frac{C\sqrt{\pi}}{\Gamma\left(3/2\right)}\left(z_{1}z_{2}\right)^{-\frac{1}{4}}\left(t_{1}t_{2}\right)^{\frac{1}{4}}+\Gamma_{t_{1}\vee t_{2}}f\left(\epsilon\right)\sqrt{t_{1}t_{2}}\\
        & \leq\Gamma_{t_{1}\vee t_{2}}F_{\epsilon}C\sqrt{\pi}\left(\hat{z_{1}}\hat{z_{2}}\right)^{-\frac{1}{4}}\left[\frac{\left(t_{1}t_{2}\right)^{\frac{1}{4}}}{\Gamma\left(3/2\right)}+\frac{f\left(\epsilon\right)}{F_{\epsilon}C\sqrt{\pi}}\frac{\left(t_{1}t_{2}\right)^{\frac{1}{2}}}{\Gamma\left(2\right)}\right],
    \end{align*}
    which matches the desired expression. Now assume that the expression
    holds for $n-1$ for some $n\geq2$. 
    \begin{align*}
        \mathbb{E}\left[u_{n}\left(z_{1},t_{1}\right)u_{n}\left(z_{2},t_{2}\right)\right] =\int_{\left[0,t_{1}\right]\times\left[0,t_{2}\right]}\hspace{-14mm} \gamma\left(r-s\right)\int_{\mathbb{R}_{+}^{2}} \hspace{-3.5mm}f\left(x-y\right)&\hat{x}^{\beta}\hat{y}^{\beta} q_{0}\left(z_{1},x,t_{1}-r\right)q_{0}\left(z_{2},y,t_{2}-s\right)\\
         & \cdot \mathbb{E}\left[u_{n-1}\left(x,r\right)u_{n-1}\left(y,s\right)\right]dxdydrds.
    \end{align*}
    Similarly, we split the integral above in $\left(x,y\right)$ into
    $A_{\epsilon}$ and $A_{\epsilon}^{\complement}$, and treat each
    term separately. 
    First, denoting
    \begin{equation}
        U_n(z,t) := \int_0^t r^{n}\int_0^\infty q_0^2(z, x, t-r)\mathrm dx\mathrm dr.
    \end{equation}
    Then, by the induction assumption and (\ref{eqn:u1BdSpatialRefinedSTWN}),
    \begin{align*}
     & \int_{\left[0,t_{1}\right]\times\left[0,t_{2}\right]}\hspace{-14mm}\gamma\left(r-s\right)\int_{A_{\epsilon}}\hspace{-3.5mm}f\left(x-y\right)\hat{x}^{\beta}\hat{y}^{\beta} q_{0}\left(z_{1},x,t_{1}-r\right)q_{0}\left(z_{2},y,t_{2}-s\right) \\
     &\hspace{33mm} \cdot \mathbb{E}\left[u_{n-1}\left(x,r\right)u_{n-1}\left(y,s\right)\right]dxdydrds\\
    \leq & \left(\Gamma_{t_{1}\vee t_{2}}F_{\epsilon}C\sqrt{\pi}\right)^{n-1}\int_{\left[0,t_{1}\right]\times\left[0,t_{2}\right]}\hspace{-12mm}\gamma\left(r-s\right)\int_{A_{\epsilon}}f\left(x-y\right)\hat{x}^{\beta-\frac{1}{4}}\hat{y}^{\beta-\frac{1}{4}} q_{0}\left(z_{1},x,t_{1}-r\right)q_{0}\left(z_{2},y,t_{2}-s\right)\\
     &\qquad \cdot \left[\frac{\left(rs\right)^{\frac{n-1}{4}}}{\Gamma\left(\oldfrac{n-1}{2}+1\right)}+\sum_{m=1}^{n-1}\sum_{k=0}^{m-1}\binom{m-1}{k}\left(\frac{f\left(\epsilon\right)}{F_{\epsilon}C\sqrt{\pi}}\right)^{k+1}\frac{\left(rs\right)^{\frac{n+k}{4}}}{\Gamma\left(\oldfrac{n+k}{2}+1\right)}\right]dxdydrds\\
    \leq & \left(C\sqrt{\pi}\right)^{n-1}\left(\Gamma_{t_{1}\vee t_{2}}F_{\epsilon}\right)^{n}\left\{ \frac{\left(U_{\frac{n-1}{2}}(z_1,t_1)\right)^{\frac{1}{2}}\left(U_{\frac{n-1}{2}}(z_2,t_2)\right)^{\frac{1}{2}}}{\Gamma\left(\oldfrac{n-1}{2}+1\right)}\right.\\
     &\left. + \sum_{m=1}^{n-1}\sum_{k=0}^{m-1}\binom{m-1}{k}\left(\frac{f\left(\epsilon\right)}{F_{\epsilon}C\sqrt{\pi}}\right)^{k+1}\frac{\left(U_{\frac{n+k}{2}}(z_1,t_1)\right)^{\frac{1}{2}}\left(U_{\frac{n+k}{2}}(z_2,t_2)\right)^{\frac{1}{2}}}{\Gamma\left(\oldfrac{n+k}{2}+1\right)}\right\} \\
    \leq & \left(\Gamma_{t_{1}\vee t_{2}}F_{\epsilon}C\sqrt{\pi}\right)^{n}\left(\hat{z_{1}}\hat{z_{2}}\right)^{-\frac{1}{4}}\left[\frac{\left(t_{1}t_{2}\right)^{\frac{n}{4}}}{\Gamma\left(\oldfrac{n}{2}+1\right)} + \sum_{m=1}^{n-1}\sum_{k=0}^{m-1}\binom{m-1}{k}\left(\frac{f\left(\epsilon\right)}{F_{\epsilon}C\sqrt{\pi}}\right)^{k+1}\frac{\left(t_{1}t_{2}\right)^{\frac{n+k+1}{4}}}{\Gamma\left(\oldfrac{n+k+1}{2}+1\right)}\right].
    \end{align*}
    Next, again by the induction assumption, 
    \begin{align*}
     & \int_{\left[0,t_{1}\right]\times\left[0,t_{2}\right]}\hspace{-14mm}\gamma\left(r-s\right)\int_{A_{\epsilon}^{\complement}}\hspace{-3.5mm}f\left(x-y\right)\hat{x}^{\beta}\hat{y}^{\beta} q_{0}\left(z_{1},x,t_{1}-r\right)q_{0}\left(z_{2},y,t_{2}-s\right) \\
     &\hspace{33mm} \cdot \mathbb{E}\left[u_{n-1}\left(x,r\right)u_{n-1}\left(y,s\right)\right]dxdydrds\\
    \leq & \left(\Gamma_{t_{1}\vee t_{2}}F_{\epsilon}C\sqrt{\pi}\right)^{n-1}\int_{\left[0,t_{1}\right]\times\left[0,t_{2}\right]}\hspace{-14mm}\gamma\left(r-s\right)\int_{A_{\epsilon}^{\complement}}\hspace{-3.5mm}f\left(x-y\right)\hat{x}^{\beta-\frac{1}{4}}\hat{y}^{\beta-\frac{1}{4}} q_{0}\left(z_{1},x,t_{1}-r\right)q_{0}\left(z_{2},y,t_{2}-s\right) \\
     & \qquad \cdot \left[\frac{\left(rs\right)^{\frac{n-1}{4}}}{\Gamma\left(\oldfrac{n-1}{2}+1\right)}+\sum_{m=1}^{n-1}\sum_{k=0}^{m-1}\binom{m-1}{k}\left(\frac{f\left(\epsilon\right)}{F_{\epsilon}C\sqrt{\pi}}\right)^{k+1}\frac{\left(rs\right)^{\frac{k+n}{4}}}{\Gamma\left(\oldfrac{n+k}{2}+1\right)}\right]dxdydrds\\
    \leq & \left(\Gamma_{t_{1}\vee t_{2}}F_{\epsilon}C\sqrt{\pi}\right)^{n}\left\{ \frac{f\left(\epsilon\right)}{F_{\epsilon}C\sqrt{\pi}}\frac{\left(\int_{0}^{t_{1}}r^{\frac{n-1}{2}}dr\right)^{\frac{1}{2}}\left(\int_{0}^{t_{2}}s^{\frac{n-1}{2}}ds\right)^{\frac{1}{2}}}{\Gamma\left(\oldfrac{n-1}{2}+1\right)}+\right.\\
     & \qquad\left.\sum_{m=1}^{n-1}\sum_{k=0}^{m-1}\binom{m-1}{k}\left(\frac{f\left(\epsilon\right)}{F_{\epsilon}C\sqrt{\pi}}\right)^{k+2}\frac{\left(\int_{0}^{t_{1}}r^{\frac{n+k}{2}}dr\right)^{\frac{1}{2}}\left(\int_{0}^{t_{2}}s^{\frac{n+k}{2}}ds\right)^{\frac{1}{2}}}{\Gamma\left(\oldfrac{n+k}{2}+1\right)}\right\} \\
    \leq & \left(\Gamma_{t_{1}\vee t_{2}}F_{\epsilon}C\sqrt{\pi}\right)^{n}\left(\hat{z_{1}}\hat{z_{2}}\right)^{-\frac{1}{4}}\left[\frac{f\left(\epsilon\right)}{F_{\epsilon}C\sqrt{\pi}}\frac{\left(t_{1}t_{2}\right)^{\frac{n+1}{4}}}{\Gamma\left(\oldfrac{n+1}{2}+1\right)} \right. \\
    &\hspace{39mm} + \left. \sum_{m=1}^{n-1}\sum_{k=0}^{m-1}\binom{m-1}{k}\left(\frac{f\left(\epsilon\right)}{F_{\epsilon}C\sqrt{\pi}}\right)^{k+2}\frac{\left(t_{1}t_{2}\right)^{\frac{n+k+2}{4}}}{\Gamma\left(\oldfrac{n+k+2}{2}+1\right)}\right].
    \end{align*}
    Adding the two expressions above and regrouping terms in the summation,
    we confirm that $\mathbb{E}\left[u_{n}\left(z_{1},t_{1}\right)u_{n}\left(z_{2},t_{2}\right)\right]$
    takes the desired form. 
    
    Moreover, we get that for every $z,t>0$ and $\epsilon>0$, 
    \begin{align*}
    \mathbb{E}\left[u_{n}^{2}\left(z,t\right)\right] & \leq\hat{z}^{-\frac{1}{2}}\left(\Gamma_{t}F_{\epsilon}C\sqrt{\pi}\right)^{n}\cdot\\
     & \qquad\qquad\left[\frac{t^{\frac{n}{2}}}{\Gamma\left(\oldfrac{n}{2}+1\right)}+\sum_{m=1}^{n}\sum_{k=0}^{m-1}\binom{m-1}{k}\left(\frac{f\left(\epsilon\right)}{F_{\epsilon}C\sqrt{\pi}}\right)^{k+1}\frac{t^{\frac{n+k+1}{2}}}{\Gamma\left(\oldfrac{n+k+1}{2}+1\right)}\right]\\
     & \leq\hat{z}^{-\frac{1}{2}}\frac{\left(\Gamma_{t}F_{\epsilon}C\sqrt{\pi t}\right)^{n}}{\Gamma\left(\oldfrac{n}{2}+1\right)}\left[1+\sum_{m=1}^{n}\sum_{k=0}^{m-1}\binom{m-1}{k}\left(\frac{f\left(\epsilon\right)\sqrt{t}}{F_{\epsilon}C\sqrt{\pi}}\right)^{k+1}\frac{\Gamma\left(\oldfrac{n}{2}+1\right)}{\Gamma\left(\oldfrac{n+k+1}{2}+1\right)}\right],
    \end{align*}
    and hence for all $p \geq 2$, 
    \begin{align*}
        \mathbb{E}\left[\left|u_{n}\left(z,t\right)\right|^{p}\right]^{\frac{1}{p}} \leq \hat{z}^{-\frac{1}{4}}&\frac{\left(\left(p-1\right)\Gamma_{t}F_{\epsilon}C\sqrt{\pi t}\right)^{\frac{n}{2}}}{\sqrt{\Gamma\left(\oldfrac{n}{2}+1\right)}} \\
        &\cdot \left[1+\sum_{m=1}^{n}\sum_{k=0}^{m-1}\binom{m-1}{k}\left(\frac{f\left(\epsilon\right)\sqrt{t}}{F_{\epsilon}C\sqrt{\pi}}\right)^{k+1}\frac{\Gamma\left(\oldfrac{n}{2}+1\right)}{\Gamma\left(\oldfrac{n+k+1}{2}+1\right)}\right]^{\frac{1}{2}}.
    \end{align*}
    Since
    \[
        \frac{\Gamma\left(\oldfrac{n}{2}+1\right)}{\Gamma\left(\oldfrac{n+k+1}{2}+1\right)} \leq \frac{2}{\Gamma\left(\oldfrac{k+1}{2}\right)},
    \]
    it is straightforward to verify that for all $z,t>0$ and $p\geq2$,
    \[
        \sum_{n=1}^{\infty}\mathbb{E}\left[\left|u_{n}\left(z,t\right)\right|^{p}\right]^{\frac{1}{p}}<\infty.
    \]
\end{proof}

\subsection{H\"older Continuity up to Boundary}
\label{sec:cont}

As mentioned in $\mathsection\ref{ssec:outline}$, now that we have bounds for $L^p-$ norms for $p \geq 2$ and refined bounds for $L^2$ norms, we continue to establish H\"older continuity of the solution to (\ref{eqn:SKE2}).
For $z_1, z_2, t_1, t_2 > 0$, similarly to the derivation of (\ref{eqn:fnKernelsDefn}), the process $u(z_1, t_1) - u(z_2, t_2) \in L^2(\Omega)$ and admits the following Wiener chaos expansion
\begin{equation}
    \label{eqn:contWienerChaosExpansion}
    \E\b{\abs{u(z_1, t_1) - u(z_2, t_2)}^2} = \sum_{n=0}^\infty \mathcal I_n\p{f_n(\cdot; z_1, z_2, t_1, t_2)},
\end{equation}
where we abuse the notation to denote $f_0(z_1, z_2, t_1, t_2) := u_0(z_1,t_1) - u_0(z_2,t_2)$ and
\begin{equation}
    \label{eqn:contKernelsDefn}
    f_n(\cdot; z_1, z_2, t_1, t_2) := f_n(\cdot; z_1, t_1) - f_n(\cdot; z_2, t_2)
\end{equation}
for $f_n(\cdot; z_1, t_1)$ and $f_n(\cdot; z_2, t_2)$ as in (\ref{eqn:fnKernelsDefn}).
Hence, setting 
\begin{equation}
    \label{eqn:fsDiffDefn}
    d_0(z_1, z_2, w, t) := q_0(z_1, w, t) - q_0(z_1, w, t), 
\end{equation}
for $z_1, z_2, w, t > 0$, then for $n \geq 1$ and $t_1 = t_2 = t > 0$,
\begin{align}
    \E\b{\abs{u_n(z_1, t) - u_n(z_2, t)}^2} = \int_{[0,t]^2} \hspace{-6mm} \gamma(r-s)\int_{\R_+^2} & \hspace{-3mm} f(x-y)\hat x^\beta \hat y^\beta d_0(z_1,z_2,x,t-r)d_0(z_1,z_2,y,t-s) \notag \\
    &\cdot \E[u_{n-1}(x,r)u_{n-1}(y,s)]\mathrm dx \mathrm dy\mathrm dr\mathrm ds. \label{eqn:recursiveDegenContNormGen}
\end{align}

Thus we can reuse the refined bounds developed in $\mathsection\ref{sec:moment}$.
We shall show that for $\beta > 1/4$, the map $z \mapsto u(z, t)$ is H\"older continuous in $L^p(\Omega)$ for any fixed $t > 0$.

\subsubsection{Space-Time White Noise}
\label{ssec:contSTWN}

In order to show the H\"older continuity, we seek to bound $\E\b{\abs{u_n(z_1, t) - u_n(z_2, t)}^2}$ by a power of $|z_1 - z_2|$.
We begin by first examining the case of space-time white noise.
In this case, (\ref{eqn:recursiveDegenContNormGen}) becomes
\begin{align}
    \label{eqn:recursiveDegenContNormSTWN} \raisetag{-7mm}\hspace{10mm}
    \E\b{\abs{u_n(z_1, t) - u_n(z_2, t)}^2} = \int_0^t \int_0^\infty \hat w^{2\beta} d_0^2(z_1,z_2,w,t-\tau)\E[u_{n-1}^2(w,\tau)] \mathrm dw \mathrm d\tau,
\end{align}

In light of (\ref{eqn:unBdRefinedSTWN}), for $n \geq 2$,
\[
    \E\b{\abs{u_n(z_1, t) - u_n(z_2, t)}^2} \leq \frac{(C\sqrt{\pi t})^{n-1}}{\displaystyle \Gamma\p{\frac{n+1}{2}}} \int_0^t \int_0^\infty \hat w^{2\beta}w^{-\frac{1}{2}} d_0^2(z_1,z_2,w,t-\tau)\mathrm dw \mathrm d\tau.
\]
So, no recursive argument is needed.
Instead, we need only focus on bounding
\begin{equation}
    \label{eqn:fsDiffSqIntDefn}
        Q_t(z_1, z_2) := \int_0^t \int_0^\infty \hat w^{2\beta}w^{-\frac{1}{2}}d_0^2(z_1,z_2,w,t-\tau)\mathrm dw \mathrm d\tau.
\end{equation}

To this end, we first present some technical results on the H\"older continuity of the map $z \mapsto q_0(z,w,t-\tau)$ for $w, t-\tau, \tau > 0$.
\begin{lemma}
    \label{lem:fsHolderCont}
    There exists a constant $M > 0$ such that for every $z_1, z_2, w, t-\tau, \tau > 0$, 
    \begin{equation}
        \label{eqn:fsHolderCont}
        \abs{d_0(z_1,z_2,w,t-\tau)} \leq M \frac{|z_1-z_2|^{\frac{1}{2}}}{(t-\tau)^{\frac{3}{2}}}.
    \end{equation}
\end{lemma}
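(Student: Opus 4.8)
The plan is to estimate the difference $d_0(z_1,z_2,w,t-\tau) = q_0(z_1,w,t-\tau) - q_0(z_2,w,t-\tau)$ by interpolating between the mean-value bound (H\"older exponent $1$, obtained from $\partial_z q_0$) and the crude triangle-inequality bound (H\"older exponent $0$, obtained from the Gaussian bound in Proposition \ref{prop:gaussianBd}), so as to land on exponent $1/2$. Concretely, I would write $d_0(z_1,z_2,w,t-\tau) = \int_{z_2}^{z_1} \partial_z q_0(z,w,t-\tau)\,\mathrm dz$, where without loss of generality $z_1 > z_2$, and bound $|\partial_z q_0(z,w,t-\tau)|$ pointwise. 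Differentiating the explicit formula \eqref{eqn:q0Formula} and using the standard Bessel identities from the appendix (namely $I_1' = I_0 - I_1/x$ and the asymptotics $I_\nu(x) \le C\,e^x/\sqrt{x}$ for large $x$, $I_\nu(x) \sim (x/2)^\nu/\nu!$ for small $x$) together with the Gaussian-type control already recorded in Proposition \ref{prop:gaussianBd}, one obtains a pointwise estimate of the shape $|\partial_z q_0(z,w,t-\tau)| \le C\,(t-\tau)^{-2}\,e^{-c(\sqrt z - \sqrt w)^2/(t-\tau)}\cdot(\text{mild algebraic factor})$.

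The key step is to combine this derivative bound with the zeroth-order bound $|d_0| \le |q_0(z_1,\cdot)| + |q_0(z_2,\cdot)| \le C\,z/(t-\tau)^2 \cdot e^{-c(\sqrt z - \sqrt w)^2/(t-\tau)}$ (again Proposition \ref{prop:gaussianBd}) by taking the geometric mean: $|d_0| = |d_0|^{1/2}\cdot|d_0|^{1/2}$, so that
\[
    |d_0(z_1,z_2,w,t-\tau)| \le \Big(|z_1-z_2|\sup_{z\in[z_2,z_1]}|\partial_z q_0(z,w,t-\tau)|\Big)^{\frac12}\Big(|q_0(z_1,w,t-\tau)| + |q_0(z_2,w,t-\tau)|\Big)^{\frac12}.
\]
Feeding in the two estimates above gives $|z_1-z_2|^{1/2}$ times $(t-\tau)^{-1}\cdot(t-\tau)^{-1/2} = (t-\tau)^{-3/2}$ after collecting the powers of $t-\tau$, with the Gaussian exponentials and the algebraic prefactors in $z,w$ being absorbed into the uniform constant $M$ (here it is essential that the exponential factor is bounded by $1$ and that the remaining $z$- and $w$-dependent algebraic factors are uniformly bounded, or can be made so by splitting into the regimes $zw \le (t-\tau)^2$ and $zw > (t-\tau)^2$ exactly as in the proof of Lemma \ref{lem:u1RatioBdSpatialSTWN}).

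The main obstacle I anticipate is the bookkeeping near $w = 0$ and near $zw = (t-\tau)^2$: the derivative $\partial_z q_0$ picks up an extra factor that, naively bounded, threatens to produce a $w^{-1/2}$ or $z^{-1/2}$ singularity, and one has to check that after taking the geometric mean with the zeroth-order bound these singular factors cancel (or are killed by the Gaussian) so that the final bound is genuinely uniform in $z_1,z_2,w$ — i.e. that the constant $M$ can be chosen independent of all spatial variables. I would handle this by the same two-region decomposition used earlier in the paper (small $w$ via the small-argument Bessel asymptotics, large $w$ via $I_1(x)\le Ce^x/\sqrt x$), verifying in each region that the worst-case power of $w$ is nonnegative. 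Once \eqref{eqn:fsHolderCont} is in hand, it will feed directly into bounding $Q_t(z_1,z_2)$ in \eqref{eqn:fsDiffSqIntDefn}, but that is the content of the next lemma rather than this one.
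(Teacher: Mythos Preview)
Your proposal is correct and reaches the stated bound, but the paper takes a somewhat different and tidier route. Both arguments start from $d_0=\int_{z_1}^{z_2}\partial_z q_0\,dz$, but where you interpolate between the $L^\infty$ bounds $\sup_{z,w}q_0\le C/(t-\tau)$ and $\sup_{z,w}|\partial_z q_0|\le C/(t-\tau)^2$ via the geometric-mean trick, the paper applies Cauchy--Schwarz directly in the $z$-integral,
\[
    \left|\int_{z_1}^{z_2}\partial_z q_0(z,w,t-\tau)\,dz\right|\le |z_1-z_2|^{1/2}\left(\int_0^\infty |\partial_z q_0(z,w,t-\tau)|^2\,dz\right)^{1/2},
\]
then invokes the clean identity $\partial_z q_0=(q_1-q_0)/(t-\tau)$ (with $q_1$ the $\nu=1$ fundamental solution from \eqref{eqn:fsDriftDefn}) and evaluates $\int_0^\infty q_\nu^2(z,w,t-\tau)\,dz$ explicitly as a ${}_2F_2$ expression, bounded by $M/(t-\tau)$ uniformly in $w$ via Proposition~\ref{prop:2F2AsymptoticExpansion}. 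Your approach avoids the ${}_2F_2$ machinery at the cost of the two-region sup-bound verification you (correctly) flag as the main obstacle; the paper's approach instead recycles exactly the same hypergeometric device already used in Lemma~\ref{lem:u1BdRefinedSTWN}, so no new ingredients enter. The uniform sup bounds you need are in fact true---both $q_0$ and $q_1$ are bounded by $C/(t-\tau)$ uniformly in $z,w$, by precisely the small/large-argument Bessel split you sketch---so your route does close, just with a bit more case-checking than the paper's $L^2(dz)$ computation.
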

\begin{proof}
    By direct computation, we have that
    \[
        \partial_z q_0(z, w, t-\tau) = \frac{1}{t-\tau}\p{q_1(z, w, t-\tau) - q_0(z,w,t-\tau)},
    \]
    where $q_1$ is given by (\ref{eqn:fsDriftDefn}), with $\nu = 1$.
    Hence,
    \begin{align*}
        \abs{d_0(z_1,z_2,w,t-\tau)} &= \abs{\int_{z_1}^{z_2} \partial_z q_0(z, w, t-\tau) \mathrm dz}\\
        &= \abs{\frac{1}{t-\tau}\int_{z_1}^{z_2} q_1(z, w, t) - q_0(z,w,t) \mathrm dz}\\
        &\leq \frac{|z_1-z_2|^{\frac{1}{2}}}{t-\tau} \b{\int_0^\infty \p{q_1^2(z,w,t-\tau) + q_0^2(z,w,t-\tau)}\mathrm dz}^{\frac{1}{2}}.
    \end{align*}
    Again, by direct computation, we have
    \begin{align*}
        \int_0^\infty q_1^2(z,w,t-\tau)\mathrm dz &= \frac{e^{-\frac{2w}{t-\tau}}}{2(t-\tau)} 
        {}_2F_2
        \left[
        \hspace{-1mm}
        \begin{array}{cc}
            \displaystyle \frac{1}{2}, 1  \\
            1, 1
        \end{array} \hspace{-2mm}
        \mathrel{\Bigg|} \frac{2w}{t-\tau}
        \right], \\
        \int_0^\infty q_0^2(z,w,t-\tau)\mathrm dz &= \frac{e^{-\frac{2w}{t-\tau}}}{4(t-\tau)} {}_2F_2
        \left[
        \hspace{-1mm}
        \begin{array}{cc}
            \displaystyle \frac{3}{2}, 3  \\
            2, 3
        \end{array} \hspace{-2mm}
        \mathrel{\Bigg|} \frac{2w}{t-\tau}
        \right].
    \end{align*}
    Combining these last two computations with Proposition \ref{prop:2F2AsymptoticExpansion} yields
    \[
        \abs{d_0(z_1,z_2,w,t-\tau)} \leq \frac{|z_1-z_2|^{\frac{1}{2}}}{t-\tau} \cdot \frac{M}{\sqrt{t-\tau}} = M \frac{|z_1-z_2|^{\frac{1}{2}}}{(t-\tau)^{\frac{3}{2}}},
    \]
    as claimed.
\end{proof}

Therefore, we observe that it is possible to extrapolate the H\"older continuity of the stochastic solution from the H\"older continuity of the fundamental solution to the deterministic, unperturbed problem.
It thus remains to treat the aftermath of substituting (\ref{eqn:fsHolderCont}) into (\ref{eqn:fsDiffSqIntDefn}) in the hopes of obtaining a bound independent of $z_1$ and $z_2$.

\begin{lemma}
    \label{lem:fsDiffsqInt}
    Fix $\beta > 1/4$ and $t > 0$. 
    For $z_1, z_2 > 0$ and any $0 \leq \lambda < (4\beta-1)\wedge1/2$, there exists a constant $M_{\beta,\lambda} > 0$ such that
    \begin{equation}
        \label{eqn:fsDiffSqIntBd}
        Q_t(z_1, z_2) \leq M_{\beta, \lambda} t^\eta |z_1-z_2|^{\frac{\lambda}{2}},
    \end{equation}
    where
    \begin{equation}
        \label{eqn:fsDiffSqIntEta}
        \eta = \b{\p{2\beta - \frac{1}{2}} \wedge \frac{1}{4}} - \frac{\lambda}{2}
    \end{equation}
\end{lemma}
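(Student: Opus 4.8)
The plan is to bound $Q_t(z_1,z_2)$ by interpolating the two estimates available for $d_0$: the pointwise H\"older bound of Lemma~\ref{lem:fsHolderCont}, which carries the factor $|z_1-z_2|^{1/2}$ we need but is strongly singular in time, and the crude bound $|d_0(z_1,z_2,w,\sigma)|\le q_0(z_1,w,\sigma)+q_0(z_2,w,\sigma)$, which loses the spatial gain but, thanks to $\beta>1/4$, integrates against $\hat w^{2\beta}w^{-1/2}$ with no time singularity. Writing $\sigma=t-\tau$ and splitting $d_0^2=|d_0|^{\lambda}|d_0|^{2-\lambda}$, Lemma~\ref{lem:fsHolderCont} gives $|d_0|^{\lambda}\le M^{\lambda}|z_1-z_2|^{\lambda/2}\sigma^{-3\lambda/2}$, while the triangle inequality together with $(a+b)^{2-\lambda}\le 2^{1-\lambda}(a^{2-\lambda}+b^{2-\lambda})$ (valid since $2-\lambda\ge 1$) reduce matters to estimating, uniformly in $z>0$, the weighted integral
\[
    R(z,\sigma):=\int_0^\infty \hat w^{2\beta}w^{-1/2}q_0^{\,2-\lambda}(z,w,\sigma)\,\mathrm dw.
\]

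The substance of the proof is the uniform bound on $R(z,\sigma)$, which I would establish by following the proof of Lemma~\ref{lem:u1RatioBdSpatialSTWN} essentially verbatim: split $\int_0^\infty=\int_0^{\sigma^2/z}+\int_{\sigma^2/z}^\infty$, apply the Gaussian estimate \eqref{eqn:gaussianBd} on the first range and the refined estimate \eqref{eqn:gaussianBdRefined} (available there since $zw\ge\sigma^2$) on the second, substitute $w=u^2$, use $\hat w^{2\beta}=(u^2\wedge 1)^{2\beta}\le u^{4\beta}$, and evaluate the resulting Gaussian-moment integrals via Proposition~\ref{prop:gaussianMomentBd}, distinguishing the sub-cases $z\le\sigma$, $\sigma<z\le 1$, and $z>1$. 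This produces $R(z,\sigma)\le C_{\beta,\lambda}\,\sigma^{\theta(z)}$ with $\theta(z)=2\beta-\tfrac{3}{2}+\lambda$ when $z\lesssim\sigma$ — the weight $\hat w^{2\beta}=w^{2\beta}$ near the origin supplying the decay — and $\theta(z)=\tfrac{\lambda-1}{2}$ when $z\gtrsim 1$, where $\hat w^{2\beta}$ has saturated at $1$ on the bulk of the mass of $q_0(z,\cdot,\sigma)$; this saturation is the origin of the $\wedge\tfrac{1}{4}$ truncation in \eqref{eqn:fsDiffSqIntEta}. Since $\sigma\le t$ and in the application $t$ stays in a fixed bounded interval, we may take $\sigma\le 1$, so the worst $z$ yields $R(z,\sigma)\le C_{\beta,\lambda}\,\sigma^{\theta}$ with $\theta:=\min\{2\beta-\tfrac{3}{2}+\lambda,\ \tfrac{\lambda-1}{2}\}$.

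Feeding these bounds back in gives
\[
    Q_t(z_1,z_2)\le 2^{2-\lambda}M^{\lambda}C_{\beta,\lambda}\,|z_1-z_2|^{\lambda/2}\int_0^t \sigma^{\theta-3\lambda/2}\,\mathrm d\sigma,
\]
and $\theta-3\lambda/2=\min\{2\beta-\tfrac{3}{2}-\tfrac{\lambda}{2},\ -\tfrac{1}{2}-\lambda\}>-1$ precisely when $\lambda<(4\beta-1)\wedge\tfrac{1}{2}$, which is exactly the hypothesis and also keeps the resulting $t$-exponent positive. The integral then equals a constant times $t^{\,1+\theta-3\lambda/2}$ with $1+\theta-3\lambda/2=\min\{2\beta-\tfrac{1}{2}-\tfrac{\lambda}{2},\ \tfrac{1}{2}-\lambda\}\ge [(2\beta-\tfrac{1}{2})\wedge\tfrac{1}{4}]-\tfrac{\lambda}{2}=\eta$, so (using $t\le 1$ to weaken the exponent down to $\eta$) we obtain \eqref{eqn:fsDiffSqIntBd}. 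I expect the only genuinely delicate step to be the uniform-in-$z$ estimate of $R(z,\sigma)$ — a multi-case computation of the same flavour as Lemma~\ref{lem:u1RatioBdSpatialSTWN}, made slightly more awkward by the non-integer exponent $2-\lambda$ on $q_0$; everything after that is bookkeeping of exponents.
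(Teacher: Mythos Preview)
Your opening moves coincide with the paper's: split $d_0^2=|d_0|^{\lambda}|d_0|^{2-\lambda}$, apply Lemma~\ref{lem:fsHolderCont} to the first factor, and bound the second by $q_0^{2-\lambda}(z_1,\cdot)+q_0^{2-\lambda}(z_2,\cdot)$. The divergence is in how you handle
\[
Q:=\int_0^t (t-\tau)^{-3\lambda/2}\int_0^\infty \hat w^{2\beta-1/2}\,q_0^{2-\lambda}(z,w,t-\tau)\,\mathrm dw\,\mathrm d\tau.
\]
The paper does \emph{not} attack this via the Gaussian estimates of Proposition~\ref{prop:gaussianBd} and a case analysis in $z$ as in Lemma~\ref{lem:u1RatioBdSpatialSTWN}. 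Instead it first replaces $\hat w^{2\beta-1/2}$ by $w^{(2\beta-1/2)\wedge 1/4}$, then uses that $q_0(z,\cdot,t-\tau)\,\mathrm dw$ is a sub-probability to apply H\"older with exponent $1/(1-\lambda)$, reducing to $\big(\int w^{\xi}q_0^{2}\,\mathrm dw\big)^{1-\lambda}$ with $\xi=\frac{(2\beta-1/2)\wedge 1/4}{1-\lambda}$; this integral has an exact ${}_2F_2$ representation, and Proposition~\ref{prop:2F2AsymptoticExpansion} gives the uniform-in-$z$ bound $M_{\beta,\lambda}(t-\tau)^{\xi-1}$ once $\xi\le 1/2$, i.e.\ under \eqref{eqn:range of gamma}. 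The time integral is then a single Beta integral yielding exactly $t^{\eta}$, valid for \emph{all} $t>0$.

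Your route via $R(z,\sigma)$ is more elementary (no special-function asymptotics) and, for $t\le 1$, actually produces a slightly better time exponent $\min\{2\beta-\tfrac12-\tfrac{\lambda}{2},\ \tfrac12-\lambda\}\ge\eta$. Two caveats. First, the adaptation of Lemma~\ref{lem:u1RatioBdSpatialSTWN} is not ``essentially verbatim'': that lemma treats a different integrand (a $q_0^2$ weighted by the ratio $u_0^2(w,\tau)/u_0^2(z,t)$), and here you must track the non-integer power $q_0^{2-\lambda}$ and the extra $w^{-1/2}$ through all three sub-cases; this is doable but genuinely more work than the paper's two-line H\"older reduction. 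Second, your final step invokes $t\le 1$ to pass from your exponent down to $\eta$; the lemma as stated holds for all $t>0$ with a $t$-independent constant, and your argument does not deliver this (for $t>1$ the inequality $t^{\text{your exponent}}\le t^{\eta}$ goes the wrong way). The paper's H\"older/${}_2F_2$ argument avoids this by landing directly on the exponent $\eta-1$ in the $\tau$-integrand.
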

\begin{proof}
    By Lemma \ref{lem:fsHolderCont}, for any $\lambda \in (0, 2)$, we have
    \begin{align*}
        Q_t(z_1, z_2) &\leq M_\lambda |z_1-z_2|^{\frac{\lambda}{2}} \int_0^t \frac{1}{(t-\tau)^{\frac{3\lambda}{2}}} \int_0^\infty \hat w^{2\beta} w^{-\frac{1}{2}}\abs{d_0(z_1,z_2,w,t-\tau)}^{2-\lambda}\mathrm dw\mathrm d\tau, \\
        &\leq M_\lambda |z_1-z_2|^{\frac{\lambda}{2}} \\
        & \hspace{3mm} \cdot \int_0^t \hspace{-1.5mm}\frac{1}{(t-\tau)^{\frac{3\lambda}{2}}} \int_0^\infty \hspace{-1.5mm}\hat w^{2\beta-\frac{1}{2}}\b{q_0^{2-\lambda}(z_1,w,t-\tau) + q_0^{2-\lambda}(z_2, w, t-\tau)}\mathrm dw\mathrm d\tau.
    \end{align*}
    It remains to treat, for any $z > 0$,
    \begin{equation}
        \label{eqn:fsPowerInt}
        Q := \int_0^t \frac{1}{(t-\tau)^{\frac{3\lambda}{2}}}\int_0^\infty q_0^{2-\lambda}(z,w,t-\tau)\hat w^{2\beta-1/2}\mathrm dw\mathrm d\tau.
    \end{equation}
    Note that $\hat w^{2\beta-\frac{1}{2}} \leq \hat w^{(2\beta-\frac{1}{2}) \wedge \frac{1}{4}} \leq w^{(2\beta-\frac{1}{2}) \wedge \frac{1}{4}}$.
    We apply the monotonicity of the moments to bound (\ref{eqn:fsPowerInt}) from above as
    \begin{align*}
        Q &\leq \int_0^t \frac{1}{(t-\tau)^{\frac{3\lambda}{2}}} \int_0^\infty w^{(2\beta-\frac{1}{2}) \wedge \frac{1}{4}} q_0^{2-\lambda}(z,w,t-\tau)\mathrm dw\mathrm d\tau\\
        &= \int_0^t \frac{1}{(t-\tau)^{\frac{3\lambda}{2}}} \int_0^\infty w^{(2\beta-\frac{1}{2}) \wedge \frac{1}{4}} q_0^{1-\lambda}(z,w,t-\tau)(q_0(z,w,t-\tau)\mathrm dw)\mathrm d\tau\\
        &\leq \int_0^t \frac{1}{(t-\tau)^{\frac{3\lambda}{2}}} \p{\int_0^\infty w^{\frac{(2\beta-\frac{1}{2}) \wedge \frac{1}{4}}{1-\lambda}} q_0^2(z,w,t-\tau)\mathrm dw}^{1-\lambda}\mathrm d\tau.
    \end{align*}
    Set $\xi := \frac{((2\beta-\frac{1}{2}) \wedge \frac{1}{4})}{(1-\lambda)}$.
    Then, by Proposition \ref{prop:2F2AsymptoticExpansion}, the integral inside the power above can be bounded as
    \begin{align*}
        \int_{0}^{\infty}q_{0}^{2}\left(z,w,t-\tau\right)w^{\xi}dw & =\frac{\Gamma\left(1+\xi\right)}{2^{3+\xi}}\left(t-\tau\right)^{\xi-1}\left(\frac{2z}{t-\tau}\right)^{2}e^{-\frac{2z}{t-\tau}}
        {}_2F_2
        \left[
        \hspace{-1mm}
        \begin{array}{cc}
            \displaystyle \frac{3}{2}, \xi+1  \\
            2, 3
        \end{array} \hspace{-2mm}
        \mathrel{\Bigg|} \frac{2z}{t-\tau}
        \right]\\
        &\leq M_{\beta, \lambda}\left(t-\tau\right)^{\xi-1},
    \end{align*}
    provided that $\frac{3}{2}+\xi+1-2-3\leq-2$, i.e., when 
    \begin{equation}
        \label{eqn:range of gamma}
        \lambda \leq\begin{cases}
        2-4\beta & \text{ if }\frac{1}{4}<\beta\leq\frac{3}{8},\\
        \frac{1}{2} & \text{ if }\beta>\frac{3}{8}.
        \end{cases}
    \end{equation}
    It follows from above that
    \[  
        Q_t(z_1, z_2) \leq M_{\beta, \lambda} |z_1-z_2|^{\frac{\lambda}{2}} \int_0^t (t-\tau)^{(\xi - 1)(1-\lambda)-\frac{3}{2}\lambda} \mathrm d\tau = M_{\beta, \lambda} t^{\eta} |z_1-z_2|^{\frac{\lambda}{2}}.
    \]
    Note that by taking $0 \leq \lambda < (4\beta-1) \wedge 1/2$, we always have $\eta > 0$ and (\ref{eqn:range of gamma}).
\end{proof}

We are finally equipped to present the main result of this section.

\begin{theorem}
    Fix $\beta > 1/4$ and $t > 0$.
    Then, for any $Z > 0$ and any $\theta \in (0, (\beta - 1/4) \wedge 1/8)$, there exists an a.s. $\theta-$H\"older continuous modification of the process $z \in [0,Z] \mapsto u(z, t) \in \R$.
    In particular, $z \mapsto u(z,t)$ is continuous all the way to the boundary 0.
\end{theorem}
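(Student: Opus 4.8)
The plan is to prove a Kolmogorov-type estimate of the form
\(\E[|u(z_1,t)-u(z_2,t)|^p]\le C\,|z_1-z_2|^{1+\gamma}\) for all \(z_1,z_2\in[0,Z]\), with a constant \(C\) depending on \(Z,t,p,\beta\) but \emph{not} on the two base points, and with \(\gamma>0\); then the Kolmogorov continuity criterion on the compact interval \([0,Z]\) supplies an a.s.\ \(\theta\)-H\"older continuous modification for every \(\theta<\gamma/p\). By letting the free parameters (a kernel H\"older exponent \(\lambda\) and the integrability index \(p\)) vary, \(\theta\) can be pushed arbitrarily close to \((\beta-1/4)\wedge 1/8\).

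First I would collect the chaos-wise \(L^2\) increment bounds. For \(n\ge 2\) this is essentially in hand: combining the recursion (\ref{eqn:recursiveDegenContNormSTWN}), the refined moment bound (\ref{eqn:unBdRefinedSTWN}), and Lemma \ref{lem:fsDiffsqInt} yields
\(\E[|u_n(z_1,t)-u_n(z_2,t)|^2]\le \frac{(C\sqrt{\pi t})^{n-1}}{\Gamma((n+1)/2)}\,M_{\beta,\lambda}\,t^{\eta}\,|z_1-z_2|^{\lambda/2}\) for any \(0\le\lambda<(4\beta-1)\wedge 1/2\), and crucially the right-hand side depends on \(z_1,z_2\) only through \(|z_1-z_2|\). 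The level \(n=1\) is handled by the same scheme as the proof of Lemma \ref{lem:fsDiffsqInt}, now using the trivial bound \(u_0\le 1\) in place of (\ref{eqn:unBdRefinedSTWN}); and the deterministic level \(n=0\) obeys \(|u_0(z_1,t)-u_0(z_2,t)|=|e^{-z_2/t}-e^{-z_1/t}|\le t^{-1}|z_1-z_2|\le t^{-1}Z^{1-\lambda/4}|z_1-z_2|^{\lambda/4}\) on \([0,Z]\). Passing to \(L^p\), Minkowski's inequality together with the hypercontractivity estimate (\ref{eqn:lpInequality}) gives
\(\E[|u(z_1,t)-u(z_2,t)|^p]^{1/p}\le\sum_{n\ge0}(p-1)^{n/2}\E[|u_n(z_1,t)-u_n(z_2,t)|^2]^{1/2}\le C_{p,t,\beta,\lambda}\,|z_1-z_2|^{\lambda/4}\), the series converging for every \(p\) because the factor \(\Gamma((n+1)/2)^{-1/2}\) overwhelms the geometric growth in \(n\). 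Thus \(\E[|u(z_1,t)-u(z_2,t)|^p]\le C^p\,|z_1-z_2|^{p\lambda/4}\) uniformly over \(z_1,z_2>0\).

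To bring in the boundary point I would set \(u(0,t):=0\), in accordance with the Dirichlet condition, and observe that the \(L^2\) ratio bound (\ref{eqn:degenURatioBd}) together with \(u_0(z,t)\le z/t\) forces \(\E[u^2(z,t)]\to0\), hence \(u(z,t)\to0\) in probability, as \(z\searrow 0\). Choosing a subsequence along which the convergence is a.s.\ and applying Fatou's lemma to the increment bound extends it to \(\E[|u(0,t)-u(z_2,t)|^p]=\E[|u(z_2,t)|^p]\le C^p z_2^{p\lambda/4}\), so the Kolmogorov estimate is valid on all of \([0,Z]\) with \(1+\gamma=p\lambda/4\). Applying Kolmogorov's criterion then yields an a.s.\ \(\theta\)-H\"older continuous modification for every \(\theta<\lambda/4-1/p\); given a target \(\theta_0<(\beta-1/4)\wedge 1/8\), I would first pick \(\lambda<(4\beta-1)\wedge 1/2\) with \(\lambda/4>\theta_0\) and then \(p\) large enough that \(\lambda/4-1/p>\theta_0\). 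The modification is continuous on the closed interval \([0,Z]\) and agrees a.s.\ with \(u(\cdot,t)\) at each point, so in particular it is continuous at \(0\), which is the asserted continuity up to the boundary.

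Given that Lemmas \ref{lem:fsHolderCont} and \ref{lem:fsDiffsqInt} and the refined \(L^2\) bounds carry the analytic weight, the remaining work is mostly bookkeeping, and the one genuinely delicate point is the boundary: the \(L^p\) moment bounds of Theorem \ref{thm:uPthMomentBdSTWN} blow up like \(z^{-1/4}\) as \(z\searrow0\) and cannot be used there directly, so the estimate at \(z=0\) must instead be recovered from the increment bound via the soft compactness argument (convergence in probability from the \(L^2\) ratio bound, then Fatou). I do not expect any obstacle beyond this.
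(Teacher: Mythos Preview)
Your proposal is correct and follows essentially the same route as the paper: chaos-wise $L^2$ increment bounds via Lemma~\ref{lem:fsDiffsqInt}, hypercontractivity (\ref{eqn:lpInequality}) to pass to $L^p$, summation over $n$ using the $\Gamma((n+1)/2)^{-1/2}$ decay, and Kolmogorov. The paper leaves the boundary point $z=0$ implicit, whereas you treat it explicitly; your Fatou argument is fine in spirit, but note that the ratio bound (\ref{eqn:degenURatioBd}) is only established for small $t$, so for arbitrary $t>0$ it is cleaner to observe that your uniform increment estimate already makes $z\mapsto u(z,t)$ Cauchy in $L^p$ as $z\searrow 0$, which yields the extension to $[0,Z]$ without any appeal to \S\ref{sec:ratio}.
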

\begin{proof}
    For any $p \geq 2$ and $n \geq 1$, by (\ref{eqn:lpInequality}) and Lemma \ref{lem:fsDiffsqInt}, for some constant $M_{p, \beta, \gamma} > 0$,
    \begin{align}
        \p{\E\b{|u_n(z_1,t) - u_n(z_2,t)|^p}}^{1/p} &\leq (p-1)^{\frac{n}{2}} \p{\E\b{|u_n(z_1,t)-u_n(z_2,t)|^2}}^{\frac{1}{2}}, \notag \\
        &\leq M_{p, \beta, \lambda} \frac{\p{(p-1)C \sqrt{\pi t}}^{\frac{n-1}{2}}}{\sqrt{\Gamma\p{\displaystyle \frac{n+1}{2}}}} |z_1-z_2|^{\frac{\lambda}{4}}t^{\frac{\eta}{2}}. \label{eqn:unHolderContBdSTWN}
    \end{align}
    Substituting (\ref{eqn:unHolderContBdSTWN}) into (\ref{eqn:contWienerChaosExpansion}), 
    \begin{align*}
        &\p{\E\b{|u(z_1,t) - u(z_2,t)|^p}}^{1/p} \\
        &\hspace{25mm} \leq \abs{u_0(z_1,t) - u_0(z_2, t)} + \sum_{n=1}^\infty M_{p, \beta, \lambda} \frac{(C \sqrt t)^{n-1}}{\sqrt{\Gamma\p{\displaystyle \frac{n+1}{2}}}} |z_1-z_2|^{\lambda/4}t^{\eta/2}\\
        &\hspace{25mm} = \abs{e^{-\frac{z_1}{t}} - e^{-\frac{z_2}{t}}} + M_{p,\beta,\lambda, t} |z_1-z_2|^{\frac{\lambda}{4}}.
    \end{align*}
    When $p$ is sufficiently large, Kolmogorov's continuity theorem implies that for every $t > 0$, $z \mapsto u(z,t)$ is almost surely $\theta$-H\"older continuous for any $\theta \in \p{0, \frac{\lambda}{4} - \frac{1}{p}}$.
    Since $p$ can be arbitrarily large and $\lambda$ can be arbitrarily close to $(4\beta-1)\wedge\frac{1}{2}$, we conclude that for every $t > 0$, $z \mapsto u(z,t)$ is almost surely $\theta$-H\"older continuous for any $\theta \in (0, (\beta- \frac{1}{4}) \wedge \frac{1}{8})$ all the way to the boundary 0.
\end{proof}

\subsubsection{Colored Noise}
\label{ssec:contColored}

As before, we seek to apply the usual approach.
Recall from $\mathsection \ref{ssec:momentColored}$ that for $x, y > 0$, $r, s \in (0, t)$, and any $n \geq 1$, we have
\begin{align}
    \E[u_n(x,r)u_n(y,s)] \leq \hat x^{-\frac{1}{4}}&\hat y^{-\frac{1}{4}}\frac{(\Gamma_t F_\varepsilon C \sqrt{\pi t})^n}{\Gamma \p{\oldfrac{n}{2}+1}} \notag \\
    &\cdot \b{1 + \sum_{m=1}^n \sum_{k=0}^{m-1} \binom{m-1}{k} \p{\oldfrac{f(\varepsilon)\sqrt t}{F_\varepsilon C \sqrt \pi }}^{k+1} \frac{\Gamma\p{\oldfrac{n}{2}+1}}{\Gamma\p{\oldfrac{n+k+1}{2}+1}}} \label{eqn:unCovBdColored2}
\end{align}
which forms a convergent series for $\varepsilon > 0$ sufficiently small.
Let $M_{n, \varepsilon, t}$ denote the right hand side of (\ref{eqn:unCovBdColored2}) excluding the factor $\hat x^{-\frac{1}{4}}\hat y^{-\frac{1}{4}}$.
Similarly to the previous section, from (\ref{eqn:recursiveDegenContNormGen}) and Proposition \ref{prop:modifiedYoung}, we have
\begin{align}
    &\E \b{\abs{u_n(z_1, t) - u_n(z_2, t)}^2} \leq M_{n-1, \varepsilon, t} \int_{[0,t]^2} \hspace{-6mm} \gamma(r-s)\int_{\R_+^2} \hspace{-3mm} f(x-y)\hat x^{\beta-\frac{1}{4}} \hat y^{\beta-\frac{1}{4}} \notag \\
    &\hspace{50mm} \cdot d_0(z_1,z_2,x,t-r)d_0(z_1,z_2,y,t-s)\mathrm dx \mathrm dy\mathrm dr\mathrm ds, \notag \\
    &\leq \Gamma_t M_{n-1,\varepsilon,t} \left(F_\varepsilon \int_0^t \int_0^\infty \hat w^{2\beta-\frac{1}{2}} d_0^2(z_1,z_2,w,t-\tau)\mathrm dw\mathrm d\tau \right. \notag \\
    \raisetag{-8.5mm} &\left. \hspace{30mm} + f(\varepsilon) \int_0^t \abs{\int_0^\infty \hat w^{\beta-\frac{1}{4}} d_0(z_1,z_2,w,t-\tau)\mathrm dw}^2 \mathrm d\tau\right). \label{eqn:introContBdColored}
\end{align}
Again, no recursive argument is needed.
Notice that the first integral in the right hand side of (\ref{eqn:introContBdColored}) is bounded by $Q_t(z_1, z_2)$ from the previous section.
However, we need also extract a power of $|z_1-z_2|$ from the second integral.
The following result shows that we can obtain an exponent which is no worse for H\"older continuity than the one seen in the previous section.

\begin{lemma}
    \label{lem:holderContGenCov}
    Fix $\beta > 1/4$ and $t > 0$.
    For $z_1, z_2 > 0$, set
    \begin{equation}
        \label{eqn:holderContGenCovDefn}
        \tilde Q_t(z_1, z_2) := \int_0^t \abs{\int_0^\infty \hat w^{\beta-\frac{1}{4}} d_0(z_1,z_2,w,t-\tau) \mathrm dw}^2 \mathrm d\tau.
    \end{equation}
    Then, there exists a constant $M > 0$ such that for any $z_1, z_2 > 0$,
    \begin{equation}
        \label{eqn:holderContGenCovBd}
        \tilde Q_t(z_1, z_2) \leq M t^{\frac{3}{4}}|z_1 - z_2|^{\frac{1}{4}}.
    \end{equation}
\end{lemma}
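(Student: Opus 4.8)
The plan is to bound the inner spatial integral $\int_0^\infty \hat w^{\beta-\frac14} |d_0(z_1,z_2,w,t-\tau)|\,\mathrm dw$ by a power of $|z_1-z_2|$ times a negative power of $(t-\tau)$ that is still time-integrable, then integrate in $\tau$. Just as in the proof of Lemma~\ref{lem:fsDiffsqInt}, the starting point is the interpolation $|d_0|^{1} = |d_0|^{\lambda}\cdot |d_0|^{1-\lambda}$ combined with the H\"older bound (\ref{eqn:fsHolderCont}) from Lemma~\ref{lem:fsHolderCont} on the first factor, which produces $|z_1-z_2|^{\lambda/2}$ and a factor $(t-\tau)^{-3\lambda/2}$, leaving $\int_0^\infty \hat w^{\beta-\frac14}|d_0(z_1,z_2,w,t-\tau)|^{1-\lambda}\,\mathrm dw$ to control. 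I would then bound $|d_0|^{1-\lambda}\le q_0(z_1,w,t-\tau)^{1-\lambda}+q_0(z_2,w,t-\tau)^{1-\lambda}$ and estimate $\int_0^\infty \hat w^{\beta-\frac14}q_0(z,w,t-\tau)^{1-\lambda}\,\mathrm dw$. Using $\hat w^{\beta-\frac14}\le w^{\beta-\frac14}$ and recalling that $\int_0^\infty q_0(z,w,t-\tau)\,\mathrm dw = u_0(z,t-\tau)\le 1$, a H\"older inequality in $w$ with exponents $\frac{1}{1-\lambda}$ and $\frac1\lambda$ (applied to $q_0^{1-\lambda}\cdot 1$, absorbing the weight appropriately) reduces matters to a moment integral $\int_0^\infty w^{\xi}q_0^2(z,w,t-\tau)\,\mathrm dw$ of exactly the type already handled via Proposition~\ref{prop:2F2AsymptoticExpansion} in Lemma~\ref{lem:fsDiffsqInt}, which is $\le M(t-\tau)^{\xi-1}$ under an appropriate parameter constraint.

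Assembling the pieces, one gets $\int_0^\infty \hat w^{\beta-\frac14}|d_0(z_1,z_2,w,t-\tau)|\,\mathrm dw \le M|z_1-z_2|^{\lambda/2}(t-\tau)^{-a}$ for some exponent $a = a(\beta,\lambda)$, so that the left side of (\ref{eqn:holderContGenCovDefn}) is bounded by $M|z_1-z_2|^{\lambda}\int_0^t (t-\tau)^{-2a}\,\mathrm d\tau$; finiteness requires $2a<1$, and the resulting time factor is $t^{1-2a}$. Then I would optimize/simplify: since the lemma only claims the clean exponent pair $(\tfrac34,\tfrac14)$, it suffices to pick one convenient admissible $\lambda$ (e.g.\ $\lambda=\tfrac12$, legitimate because $\beta>\tfrac14$ but one must double-check the parameter constraint analogous to (\ref{eqn:range of gamma}); if $\lambda=\tfrac12$ is not admissible for $\beta$ close to $\tfrac14$, take $\lambda$ slightly smaller and note $|z_1-z_2|^{\lambda/2}\ge |z_1-z_2|^{1/4}$ is the wrong direction, so instead one keeps the sharp exponent and remarks that the weaker claim $(\tfrac34,\tfrac14)$ follows by restricting $z_1,z_2$ to a bounded set and absorbing constants — but more cleanly, since $\beta>\tfrac14$ the weight $\hat w^{\beta-\frac14}$ is a genuine positive power, which only helps, and choosing $\lambda=\tfrac12$ should work directly). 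I would present it with $\lambda=\tfrac12$, compute $\xi$ and $a$ explicitly, verify $2a<1$ and the $_2F_2$-constraint, and read off $t^{3/4}|z_1-z_2|^{1/4}$.

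The main obstacle I anticipate is bookkeeping the exponent constraints: one needs simultaneously that the moment-integral estimate from Proposition~\ref{prop:2F2AsymptoticExpansion} applies (the $\tfrac32 + \xi + 1 - 2 - 3 \le -2$ type condition, now with $\xi = (\beta-\tfrac14)/(1-\lambda)$ rather than the $\xi$ of Lemma~\ref{lem:fsDiffsqInt}), that the final time integral converges ($2a<1$), and that the claimed power of $|z_1-z_2|$ comes out to exactly $\tfrac14$ with time exponent $\tfrac34$. There is a small subtlety because here the spatial integral is of $d_0$ to the \emph{first} power (not squared, unlike $Q_t$), so the interpolation exponent and the resulting powers differ from those in Lemma~\ref{lem:fsDiffsqInt}; I expect that $\lambda = \tfrac12$ gives $(t-\tau)$-exponent $-a$ with $a = \tfrac34 \cdot \tfrac12 + (1-\xi)(1-\lambda)\cdot(\text{something})$ — the arithmetic needs to be done carefully — and that $2a = \tfrac14$, i.e.\ $a = \tfrac18$, yielding $\int_0^t(t-\tau)^{-1/4}\,\mathrm d\tau \sim t^{3/4}$, consistent with the statement. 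Everything else (positivity of the weight, boundedness of $u_0$, applicability of Proposition~\ref{prop:modifiedYoung} upstream) is routine.
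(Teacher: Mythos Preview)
Your reduction step has a genuine gap. After the interpolation $|d_0| = |d_0|^{\lambda}\,|d_0|^{1-\lambda}$ and the bound $|d_0|^{1-\lambda}\le q_0(z_1,\cdot)^{1-\lambda}+q_0(z_2,\cdot)^{1-\lambda}$, you are left with
\[
\int_0^\infty \hat w^{\beta-\frac14}\,q_0(z,w,t-\tau)^{1-\lambda}\,\mathrm dw,\qquad 0<1-\lambda<1,
\]
and you claim a H\"older inequality with exponents $\tfrac{1}{1-\lambda},\tfrac{1}{\lambda}$ applied to $q_0^{1-\lambda}\cdot 1$ reduces this to the moment integral $\int w^{\xi}q_0^2\,\mathrm dw$. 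It does not: with those exponents one lands on $(\int q_0)^{1-\lambda}(\int_{\R_+}1)^\lambda$, and the second factor is infinite. Any H\"older pairing that tries to isolate $q_0^2$ likewise forces an integral of a pure power of $w$ over $\R_+$, which diverges. The analogous step in Lemma~\ref{lem:fsDiffsqInt} works only because there the exponent on $q_0$ is $2-\lambda>1$, so one can peel off a full factor of $q_0$ and use $q_0\,\mathrm dw$ as a \emph{finite} reference measure; here the exponent is $<1$ and that mechanism is unavailable. If instead one tries to estimate $\int q_0^{1-\lambda}\,\mathrm dw$ directly via the Gaussian-type bounds in Proposition~\ref{prop:gaussianBd}, the result behaves like $z^{\lambda/2}$ for large $z$, so the final constant would depend on $z_1,z_2$, contradicting the uniform statement of the lemma.

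The paper's proof is entirely different and much shorter, and it exploits precisely the feature that distinguishes $\tilde Q_t$ from $Q_t$: the inner spatial quantity is an $L^1_w$-integral (not an $L^2_w$-integral), so one can bound it directly. Two elementary estimates suffice. First, since $\hat w^{\beta-\frac14}\le 1$ and $\int q_0(z_i,w,t-\tau)\,\mathrm dw\le 1$, the inner integral is at most $2$. Second, writing $d_0=\int_{z_1}^{z_2}\partial_z q_0\,\mathrm dz=(t-\tau)^{-1}\int_{z_1}^{z_2}(q_1-q_0)\,\mathrm dz$ and using $\int q_1\,\mathrm dw=1$, $\int q_0\,\mathrm dw\le 1$ gives
\[
\Big|\int_0^\infty \hat w^{\beta-\frac14}d_0\,\mathrm dw\Big|\le \frac{2|z_1-z_2|}{t-\tau}.
\]
Interpolating $|X|^2\le 2^{2-p}|X|^p$ with $p=\tfrac14$ and integrating in $\tau$ yields $\tilde Q_t(z_1,z_2)\le C\,t^{3/4}|z_1-z_2|^{1/4}$, with no use of Lemma~\ref{lem:fsHolderCont} or the $_2F_2$ machinery.
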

\begin{proof}
    It's clear that
    \begin{equation}
        \label{eqn:unifDiffBd}
        \abs{\int_0^\infty \hat w^{\beta - \frac{1}{4}}d_0(z_1,z_2,w,t-\tau)\mathrm dw } \leq 2.
    \end{equation}
    Choose $p \in (0, 2)$.
    By (\ref{eqn:unifDiffBd}), we have
    \begin{align*}
        \tilde Q_t(z_1, z_2) &\leq 2^{2-p}\int_0^t \abs{\int_0^\infty \hat w^{\beta - \frac{1}{4}} d_0(z_1,z_2,w,t-\tau) \mathrm dw}^p \mathrm d\tau, \\
        &\leq 2^{2-p}\int_0^t \frac{1}{(t-\tau)^p} \p{ \int_{z_1}^{z_2} q_1(z,w,t-\tau) + q_0(z,w,t-\tau) \mathrm dw \mathrm dz}^p \mathrm d\tau.
    \end{align*}
    By direct computation, we observe
    \begin{align*}
        \int_0^\infty q_1(z,w,t-\tau) \mathrm dw &= \frac{1}{t-\tau} e^{-\frac{z}{t-\tau}} \sum_{n=0}^\infty \frac{z^n}{(t-\tau)^{2n}(n!)^2} \int_0^\infty e^{-\frac{w}{t-\tau}}w^n \mathrm dw\\
        &= e^{-\frac{z}{t-\tau}}\sum_{n=0}^\infty \frac{1}{n!}\p{\frac{z}{t-\tau}}^n = 1.
    \end{align*}
    Therefore together with Lemma \ref{lem:u0Int},
    \begin{align*}
        \tilde Q_t(z_1, z_2) &\leq 2^{2-p}\int_0^t \frac{1}{(t-\tau)^p} \p{\int_{z_1}^{z_2} \int_0^\infty q_1(z,w,t-\tau) \mathrm dw + \int_0^\infty q_0(z,w,t-\tau) \mathrm dw \mathrm dz}^p \mathrm d\tau\\
        &\leq 4 |z_1 - z_2|^p \int_0^t \frac{1}{(t-\tau)^p}\mathrm d\tau\\
        &= \frac{4}{1-p}|z_1-z_2|^p t^{1-p}.
    \end{align*}
    Taking $p = \frac{1}{4}$, we obtain the desired bound.
\end{proof}

\begin{theorem}
    \label{thm:contColored}
    Fix $\beta >  \frac{1}{4}$ and $t > 0$.
    Then, for any $Z > 0$ and $\theta \in (\beta - 1/4) \wedge 1/8$, there exists an a.s. $\theta$-H\"older continuous modification of the process $z \in [0, Z] \mapsto u(z, t) \in \R$.
    In particular, $z \in u(z,t)$ is continuous all the way to the boundary.
\end{theorem}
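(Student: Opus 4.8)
The plan is to mirror the structure of the space-time white noise case (the preceding theorem), replacing the refined white-noise moment bound $(\ref{eqn:unBdRefinedSTWN})$ with the colored-noise analog $(\ref{eqn:unCovBdColored2})$, and combining $(\ref{eqn:recursiveDegenContNormGen})$ with the two spatial-integral bounds already available: $Q_t(z_1,z_2) \leq M_{\beta,\lambda}t^\eta|z_1-z_2|^{\lambda/2}$ from Lemma \ref{lem:fsDiffsqInt}, and $\tilde Q_t(z_1,z_2) \leq Mt^{3/4}|z_1-z_2|^{1/4}$ from Lemma \ref{lem:holderContGenCov}. First I would start from $(\ref{eqn:introContBdColored})$, which already splits $\E[|u_n(z_1,t)-u_n(z_2,t)|^2]$ into an $A_\varepsilon$-piece controlled by $F_\varepsilon Q_t(z_1,z_2)$ and an $A_\varepsilon^c$-piece controlled by $f(\varepsilon)\tilde Q_t(z_1,z_2)$, each multiplied by $\Gamma_t M_{n-1,\varepsilon,t}$. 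Choosing $\lambda < (4\beta-1)\wedge\frac12$ close to its supremum and noting that the $\tilde Q_t$ exponent $\tfrac14$ on $|z_1-z_2|$ is no worse than $\tfrac{\lambda}{2}$ when $\lambda$ is taken in $(\tfrac12,\ldots)$ — or else one simply takes $\lambda \leq \tfrac12$ and the two powers agree — we get, for $n\geq 2$,
\[
    \E\b{\abs{u_n(z_1,t)-u_n(z_2,t)}^2} \leq M_{n-1,\varepsilon,t}\,\Gamma_t\p{F_\varepsilon M_{\beta,\lambda} + f(\varepsilon)M}\,t^{\eta\wedge\frac34}\,|z_1-z_2|^{\frac{\lambda}{2}\wedge\frac14},
\]
where $M_{n-1,\varepsilon,t}$ is exactly the right-hand side of $(\ref{eqn:unCovBdColored2})$ with $n$ replaced by $n-1$ and the $\hat x^{-1/4}\hat y^{-1/4}$ factor stripped off (here the hypothesis $\beta>\tfrac14$ is what makes $\hat w^{2\beta-1/2}$ and $\hat w^{\beta-1/4}$ genuine positive powers absorbable into the bounds).

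Next I would apply $(\ref{eqn:lpInequality})$: for $p\geq 2$,
\[
    \p{\E\b{|u_n(z_1,t)-u_n(z_2,t)|^p}}^{1/p} \leq (p-1)^{n/2}\sqrt{M_{n-1,\varepsilon,t}\,\Gamma_t\p{F_\varepsilon M_{\beta,\lambda}+f(\varepsilon)M}}\;t^{(\eta\wedge\frac34)/2}\,|z_1-z_2|^{(\frac{\lambda}{2}\wedge\frac14)/2}.
\]
Summing over $n\geq 1$, the series $\sum_n (p-1)^{n/2}\sqrt{M_{n-1,\varepsilon,t}}$ converges for $\varepsilon$ sufficiently small because $M_{n-1,\varepsilon,t}$ is dominated by $(\Gamma_t F_\varepsilon C\sqrt{\pi t})^{n-1}/\Gamma(\tfrac{n-1}{2}+1)$ times a factor growing at most exponentially in $n$ (exactly the convergence already verified at the end of the proof of Theorem \ref{thm:uPthMomentBdColored}, using $\Gamma(\tfrac{n}{2}+1)/\Gamma(\tfrac{n+k+1}{2}+1)\leq 2/\Gamma(\tfrac{k+1}{2})$); the extra $(p-1)^{n/2}$ and $(\Gamma_tF_\varepsilon)^{1/2}$ per term do not break convergence since $\Gamma(\tfrac{n-1}{2}+1)^{-1/2}$ decays super-geometrically. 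Adding the $n=0$ term $|u_0(z_1,t)-u_0(z_2,t)| = |e^{-z_1/t}-e^{-z_2/t}| \leq t^{-1}|z_1-z_2|$, which is Lipschitz hence $\tfrac14$-Hölder on any bounded interval, we obtain a constant $M_{p,\beta,\varepsilon,t}$ with
\[
    \p{\E\b{|u(z_1,t)-u(z_2,t)|^p}}^{1/p} \leq M_{p,\beta,\varepsilon,t}\,|z_1-z_2|^{(\frac{\lambda}{2}\wedge\frac14)/2}
\]
uniformly for $z_1,z_2\in[0,Z]$.

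Finally I would invoke Kolmogorov's continuity theorem: for $p$ large the estimate above yields an a.s. $\theta$-Hölder modification of $z\in[0,Z]\mapsto u(z,t)$ for any $\theta\in(0,\tfrac12(\tfrac{\lambda}{2}\wedge\tfrac14)-\tfrac1p)$; letting $p\to\infty$ and $\lambda\nearrow(4\beta-1)\wedge\tfrac12$ gives every $\theta\in(0,(\beta-\tfrac14)\wedge\tfrac18)$, and since the bound holds down to $z=0$ the modification extends continuously to the boundary. The main obstacle I anticipate is bookkeeping the double sum in $M_{n,\varepsilon,t}$ through the extra $\sqrt{p-1}^{\,n}$ and $(\Gamma_t F_\varepsilon)^{1/2}$ factors carefully enough to confirm summability for a fixed small $\varepsilon$ (depending on $t$ and $p$); this is essentially the same combinatorial estimate as in Theorems \ref{thm:euColored} and \ref{thm:uPthMomentBdColored}, so it should go through, but it is the one place where a naive geometric-series bound could fail and one must use the $\Gamma$-function decay. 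A secondary point requiring care is ensuring the exponent extracted from $\tilde Q_t$ in Lemma \ref{lem:holderContGenCov} is not the bottleneck: since we are free to choose $\lambda\leq\tfrac12$, the $\min$ with $\tfrac14$ is harmless and the final Hölder exponent is governed by $(4\beta-1)\wedge\tfrac12$ exactly as in the white-noise case.
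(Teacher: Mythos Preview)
Your proposal is correct and follows essentially the same route as the paper: start from $(\ref{eqn:introContBdColored})$, bound the two pieces by $F_\varepsilon Q_t(z_1,z_2)$ and $f(\varepsilon)\tilde Q_t(z_1,z_2)$ via Lemmas \ref{lem:fsDiffsqInt} and \ref{lem:holderContGenCov}, lift to $L^p$ by $(\ref{eqn:lpInequality})$, sum using the $\Gamma(\tfrac{n}{2}+1)^{-1}$ decay in $M_{n-1,\varepsilon,t}$ (which in fact gives convergence for \emph{every} $\varepsilon>0$, not just small ones), and conclude by Kolmogorov. The only cosmetic difference is that the paper keeps the two H\"older terms $t^{\eta/2}|z_1-z_2|^{\lambda/4}$ and $t^{3/8}|z_1-z_2|^{1/8}$ separate rather than taking a minimum; since $\lambda<\tfrac12$ forces $\tfrac{\lambda}{4}\leq\tfrac18$, the first is the bottleneck either way and the final exponent range $(0,(\beta-\tfrac14)\wedge\tfrac18)$ is identical.
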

\begin{proof}
    For any $n \geq 1$ and $p = 2$, by substituting (\ref{eqn:fsDiffSqIntBd}) and (\ref{eqn:holderContGenCovBd}) into (\ref{eqn:introContBdColored}),
    \begin{align*}
        &\E \b{\abs{u_n(z_1, t) - u_n(z_2, t)}^2} \leq \Gamma_t M_{n-1,\varepsilon,t}\p{M_{\beta, \lambda} F_\varepsilon t^\eta |z_1-z_2|^{\frac{\lambda}{2}} +  Mf(\varepsilon)t^{\frac{3}{4}}|z_1 - z_2|^{\frac{1}{4}}},
    \end{align*}
    for $\eta$ and $\lambda$ as in Lemma \ref{lem:fsDiffsqInt}.
    Hence for $p \geq 2$, by (\ref{eqn:lpInequality}),
    \begin{align}
         \E &\b{\abs{u_n(z_1, t) - u_n(z_2, t)}^p}^{\frac{1}{p}} \notag \\
         &\hspace{10mm} \leq \sqrt{\Gamma_t M_{\beta, \lambda} (F_\varepsilon + f(\varepsilon))} (p-1)^{\frac{n}{2}} \sqrt{M_{n-1,\varepsilon,t}}\p{t^\eta |z_1-z_2|^{\frac{\lambda}{2}} + t^{\frac{3}{4}}|z_1-z_2|^{\frac{1}{4}}}^{\frac{1}{2}}
         \notag \\
         \raisetag{-6.5mm} &\hspace{10mm} \leq \sqrt{\Gamma_t M_{\beta, \lambda} (F_\varepsilon + f(\varepsilon))} (p-1)^{\frac{n}{2}} \sqrt{M_{n-1,\varepsilon,t}} \p{ t^{\frac{\eta}{2}}|z_1-z_2|^{\frac{\lambda}{4}} + t^{\frac{3}{8}}|z_1-z_2|^{\frac{1}{8}} }, \label{eqn:unHolderContBdColored}
    \end{align}
    where $(p-1)^{\frac{n}{2}}\sqrt{M_{n-1,\varepsilon, t}}$ is summable over $n$, independent of the values of the other variables.
    Substituting (\ref{eqn:unHolderContBdColored}) into (\ref{eqn:contWienerChaosExpansion}), we get
    \begin{align*}
        &\E \b{\abs{u(z_1, t) - u(z_2, t)}^p}^{\frac{1}{p}} \leq |u_0(z_1, t) - u_0(z_2, t)| \\
        &+ \sqrt{\Gamma_t M_{\beta, \lambda} (F_\varepsilon + f(\varepsilon))} \sum_{n=1}^\infty  (p-1)^{\frac{n}{2}}\sqrt{M_{n-1,\varepsilon,t}} \p{ t^{\frac{\eta}{2}}|z_1-z_2|^{\frac{\lambda}{4}} + t^{\frac{3}{8}}|z_1-z_2|^{\frac{1}{8}} }, \\
        &= |e^{-\frac{z_1}{t}} - e^{-\frac{z_2}{t}}| + M_{t, \varepsilon,\beta,\lambda} \p{ t^{\frac{\eta}{2}}|z_1-z_2|^{\frac{\lambda}{4}} + t^{\frac{3}{8}}|z_1-z_2|^{\frac{1}{8}} }.
    \end{align*}
    When $p$ is sufficiently large, Kolmogorov's continuity theorem implies that for every $t > 0$, $z \mapsto u(z,t)$ is almost surely $\theta$-H\"older continuous for any $\theta \in \p{0, \frac{\lambda}{4} - \frac{1}{p}}$.
    Since $p$ can be arbitrarily large and $\lambda$ can be arbitrarily close to $(4\beta-1)\wedge\frac{1}{2}$, we conclude that for every $t > 0$, $z \mapsto u(z,t)$ is almost surely $\theta$-H\"older continuous for any $\theta \in (0, (\beta- \frac{1}{4}) \wedge \frac{1}{8})$.
\end{proof}

\section{Conclusion and Further Questions} \label{sec:conclusion}
Beyond the results demonstrated above for existence, uniqueness, estimates, and continuity of the solution to (\ref{eqn:SKE1}), we are interested in exploring other commonly studied properties of SPDEs, such as long-term asymptotics and intermittency (see \cite{pamOG, Berger2023, intermittence, nualart, PAMCarmonaMolchanov}).

Moreover, while we have demonstrated results in the special case of a driftless ($\nu = 0$) stochastic Kimura equation, the deterministic problem (\ref{eqn:kimuraDrift}) studied in \cite{chenDEG} possesses a more general constant drift, $\nu < 1$.
At present, we have no cause to believe that the introduction of such a drift term would have a qualitative impact on existence, uniqueness, estimates or continuity of the solution.
This is because the only change that would occur is to replace $q_0(z,w,t)$ by $q_\nu(z,w,t)$.
We can follow exactly the same steps explained in $\mathsection \ref{sec:results}$ to construct and to study the solution to the stochastic Kimura equation with a constant drift $\nu < 1$.
We expect the same techniques employed in the above to work to remain applicable, albeit more involved to apply.
These technical challenges stem from the fact that computations such as (\ref{eqn:u0}) and (\ref{eqn:fsSqIntDefn}) are special cases of formulas involving hypergeometric functions.

On the other hand, 
it is not clear whether the estimates presented in this work, particularly the integral bounds derived in $\mathsection \ref{sec:results}$, are all sharp.
This is in part due to the spatial integrals over the domain $\R_+$, for which we use estimates for integrands involving local degeneracies $\hat z$ with Bessel functions or hypergeometric functions.
Hence, employing tighter bounds in these steps would yield tighter bounds overall.
In the same vein, it remains to be examined whether the requirement on the order of the degeneracy $\hat z$ of the noise near the boundary can be relaxed for obtaining the estimates and continuity in $\mathsection \ref{sec:moment}$ and $\mathsection \ref{sec:cont}$.
In light of improving sharpness overall in the results, it may be possible recover the same results under less degenerate or even non-degenerate Gaussian noise using more refined estimates.

Finally, in the deterministic setting, the Wright-Fisher equation, which is the two-sided boundary analog of the Kimura equation, can be transformed into a one-sided boundary model through a localization method \cite{chenWF}.
It would be interesting to explore and to adapt the localization method in the stochastic setting, which may allow us to investigate the solution to the stochastic Wright-Fisher equation and compare 
it to the solution of the stochastic Kimura equation studied in this work.

\newpage
\section{Appendix} \label{sec:appendix}
In this section, we collect some inequalities and identities that are used in $\mathsection \ref{sec:results}$.
We omit all the proofs.

\begin{proposition}{(Moment Estimate of Gaussian Distribution)}
    \label{prop:gaussianMomentBd}
    For every $p > -1$ there exists a constant $C_p > 0$ such that
    \begin{equation}
        \label{eqn:gaussianMomentBd}
        \frac{1}{\sqrt{2\pi\sigma^2}} \int_\R |u|^p e^{-\frac{(u-m)^2}{2\sigma^2}} \mathrm du \leq C_p(m^p + \sigma^p).
    \end{equation}
\end{proposition}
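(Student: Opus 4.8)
The plan is to rescale to the standard-variance case and then split according to the sign of $p$. First I would substitute $u = \sigma v$ in the integral, which reduces the claim to showing that
\[
    g(a) := \frac{1}{\sqrt{2\pi}} \int_\R |v|^p e^{-\frac{(v-a)^2}{2}} \mathrm dv \le C_p\p{1 + a^p}, \qquad a := \frac{m}{\sigma} > 0,
\]
uniformly in $a$; indeed, multiplying such a bound by $\sigma^p$ and using $\sigma^p a^p = m^p$ returns exactly $C_p\p{m^p + \sigma^p}$. Throughout, the one quantity that must be finite is the $p$-th absolute moment $M_p := \frac{1}{\sqrt{2\pi}} \int_\R |w|^p e^{-w^2/2} \mathrm dw$ of the standard Gaussian, and this is finite precisely because $p > -1$.

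For $p \ge 0$ I would use the elementary inequality $|v|^p \le 2^p\p{|v-a|^p + a^p}$, coming from $|v| \le |v-a| + a$; after the translation $w = v-a$ this yields $g(a) \le 2^p\p{M_p + a^p}$, which is already of the desired form. For $-1 < p < 0$ the subtlety is the singularity of $|v|^p$ at the origin, so I would split the integral at $|v| = a/2$. On $\{|v| \le a/2\}$ one has $(v-a)^2 \ge v^2$ (trivial for $v < 0$, and for $0 < v \le a/2$ because then $a \ge 2v$), hence $e^{-(v-a)^2/2} \le e^{-v^2/2}$ and this piece is at most $M_p$. On $\{|v| > a/2\}$ one simply bounds $|v|^p \le (a/2)^p = 2^{-p}a^p$ and the total Gaussian mass by $1$, so this piece is at most $2^{-p}a^p$. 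Adding the two contributions gives $g(a) \le M_p + 2^{-p}a^p \le C_p\p{1 + a^p}$, as required.

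So the only genuinely delicate point — the regime of small $m/\sigma$ with $-1 < p < 0$, where the Gaussian concentrates near the singularity of $|u|^p$ — is disposed of by the split at $|v| = a/2$, which is arranged exactly so that on the side containing the singularity the factor $e^{-(v-a)^2/2}$ may be replaced by $e^{-v^2/2}$ and absorbed into the finite moment $M_p$. Everything else is routine bookkeeping, and the value of the constant $C_p$ can be read off as $2^p\p{M_p + 1}$ in the first case and $M_p + 2^{-p}$ in the second.
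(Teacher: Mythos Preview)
Your proof is correct. The paper does not actually provide a proof of this proposition: it appears in the Appendix (\S\ref{sec:appendix}), which opens with ``In this section, we collect some inequalities and identities that are used in \S\ref{sec:results}. We omit all the proofs.'' So there is nothing to compare your argument against.

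For the record, your argument is clean and complete: the rescaling $u=\sigma v$ reduces to a one-parameter family, the case $p\ge 0$ is immediate from $|v|^p\le 2^p(|v-a|^p+a^p)$, and for $-1<p<0$ your split at $|v|=a/2$ is exactly what is needed, since on $\{|v|\le a/2\}$ the inequality $(v-a)^2\ge v^2$ lets you dominate by the finite moment $M_p$, while on $\{|v|>a/2\}$ the monotonicity of $|v|^p$ gives the $a^p$ term. The finiteness of $M_p$ is precisely where the hypothesis $p>-1$ enters. (You are tacitly assuming $m,\sigma>0$, which matches how the proposition is used in the paper.)
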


\begin{proposition}{(Young's Convolution Inequality, Theorem 3.9.4 in \cite{Bogachev2007})}
    \label{prop:modifiedYoung}
    Consider some sets $A, B, C \subseteq \R$ and locally integrable functions $f,g,h : \R \to \R$.
    Denote $D = \set{(x,y) \in B \times C | x-y \in A}$, then for all $p, q, r \geq 1$ such that
    \[
        \frac{1}{p} + \frac{1}{q} + \frac{1}{r} = 2,
    \]
    \begin{equation}
        \label{eqn:modifiedYoung}
        \abs{\int_D f(x-y)g(x)h(y)\mathrm dx \mathrm dy} \leq \p{\int_A |f|^r}^{\frac{1}{r}}\p{\int_B |g|^p}^{\frac{1}{p}}\p{\int_C |h|^q}^{\frac{1}{q}}.
    \end{equation}
\end{proposition}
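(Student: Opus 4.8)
The plan is to reduce the trilinear estimate (\ref{eqn:modifiedYoung}) to the classical bilinear Young convolution inequality together with H\"older's inequality. First I would extend $f$, $g$, $h$ by zero outside $A$, $B$, $C$ respectively; after this extension the integral over $D$ coincides with $\int_{\R^2} f(x-y)\,g(x)\,h(y)\,\mathrm dx\,\mathrm dy$, and the norms $\|f\|_{L^r(A)}$, $\|g\|_{L^p(B)}$, $\|h\|_{L^q(C)}$ coincide with the corresponding norms over all of $\R$. Replacing $f,g,h$ by $|f|,|g|,|h|$ makes the integrand nonnegative, so Tonelli's theorem legitimizes the iterated-integral manipulations below and it suffices to bound the resulting nonnegative quantity.

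Next I would recognize the iterated integral as the pairing of a convolution with $h$: the substitution $x = y+u$ in the inner integral gives $\int_{\R} f(x-y)\,g(x)\,\mathrm dx = (\check f * g)(y)$, where $\check f(u):=f(-u)$, whence
\[
    \int_{\R^2} f(x-y)\,g(x)\,h(y)\,\mathrm dx\,\mathrm dy \;=\; \int_{\R} (\check f * g)(y)\,h(y)\,\mathrm dy \;\le\; \|\check f * g\|_{L^{q'}(\R)}\,\|h\|_{L^{q}(\R)},
\]
the last step being H\"older's inequality for the conjugate pair $(q',q)$. If one sets the exponent $s$ by $1/s = 1/r + 1/p - 1$, the hypothesis $1/p+1/q+1/r = 2$ says precisely that $s = q'$, so Young's convolution inequality applies and yields $\|\check f * g\|_{L^{q'}} \le \|\check f\|_{L^r}\|g\|_{L^p} = \|f\|_{L^r}\|g\|_{L^p}$. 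Chaining the two inequalities gives (\ref{eqn:modifiedYoung}). Provided Young's inequality is invoked in its general form (including the $L^{\infty}$ endpoints), this argument covers every admissible choice of $p,q,r$ uniformly.

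A self-contained alternative avoids any appeal to Young's inequality: split the integrand symmetrically as
\[
    |f(x-y)\,g(x)\,h(y)| = \bigl(|g(x)|^{p}|h(y)|^{q}\bigr)^{1/r'}\bigl(|f(x-y)|^{r}|h(y)|^{q}\bigr)^{1/p'}\bigl(|f(x-y)|^{r}|g(x)|^{p}\bigr)^{1/q'},
\]
where $p',q',r'$ are the conjugate exponents, so $1/p'+1/q'+1/r'=1$; using $1/p+1/q+1/r=2$ one checks that the total exponent of each of $|f|$, $|g|$, $|h|$ on the right equals $1$, so the identity is valid. The three-factor H\"older inequality with exponents $(p',q',r')$ then bounds $\int_D$ by the product of $\bigl(\int_D |g(x)|^p|h(y)|^q\bigr)^{1/r'}$ and the two analogous terms; since $D\subseteq B\times C$, each such double integral factors into a product of one-dimensional integrals — using translation invariance of Lebesgue measure for the factors involving $f(x-y)$ — and a final round of exponent bookkeeping, again powered by $1/p+1/q+1/r=2$, collapses the product to $\|f\|_{L^r(A)}\|g\|_{L^p(B)}\|h\|_{L^q(C)}$. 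I expect the only points genuinely requiring care to be the handful of degenerate configurations in which one of $p,q,r$ equals $\infty$ (equivalently, the other two equal $1$), where the split is vacuous but the asserted bound is immediate, and the routine verification of measurability of the $x$- and $y$-slices of $D$ and of the applicability of Tonelli's theorem; there is no substantive obstacle beyond the exponent arithmetic. Since the statement is Theorem~3.9.4 of \cite{Bogachev2007}, one may of course simply cite it instead.
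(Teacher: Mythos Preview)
Your proposal is correct; both the reduction to the bilinear Young inequality via H\"older and the symmetric three-factor H\"older splitting are standard and valid routes to (\ref{eqn:modifiedYoung}), and your handling of the zero-extension, measurability, and endpoint cases is adequate.

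Note, however, that the paper does not actually prove this proposition: the appendix opens with ``We omit all the proofs,'' and Proposition~\ref{prop:modifiedYoung} is simply quoted from \cite{Bogachev2007}. So there is no ``paper's own proof'' to compare against --- you have supplied an argument where the authors chose to cite one. Your final sentence already anticipates this.
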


Next, we present various special functions.
For more details, see \cite{watson}.
\newline

\noindent
\emph{Bessel Functions}. 
Let $\alpha \in \mathbb C$, the \emph{Bessel function of the first kind} $J_\alpha$ are defined as the solution to 
\[
    x^2 \dv[2]{y}{x} + x \dv{y}{x} + (x^2 - \alpha^2)y = 0,
\]
which is non-singular at the origin.
They have various representations, for example, 
\[
    J_\alpha(x) = \sum_{m=0}^\infty \frac{(-1)^m}{m! \Gamma(m+\alpha+1)} \p{\frac{x}{2}}^{2m+\alpha}.
\]
The \emph{modified Bessel function of the first kind} $I_\alpha$ is given by 
\begin{equation}
    \label{eqn:BesselIDefn}
    I_\alpha(x) = i^{-\alpha} J_\alpha(ix) = \sum_{m=0}^\infty \frac{1}{m!\Gamma(m + \alpha + 1)}\p{\frac{x}{2}}^{2m+\alpha}.
\end{equation}
They also have various representations and recurrence relations, for example for $n \in \N$,
\[
    \dv{}{x}\b{x^{-n} I_n(x)} = x^{-n} I_{n+1}(x).
\]
In particular for $n = 0$, $I_0'(x) = I_1(x)$.
\newline

\noindent
\emph{Hypergeometric Functions.}
For each $n \geq 0$ and $a \in \R$, we define the \emph{Pochhammer symbol} as
\[
    (a)_n :=
    \begin{cases}
        1 &; \text{ if } n = 0\\
        a(a+1)(a+2)\cdots(a+n-1) &; \text{ if } n \geq 1 
    \end{cases}.
\]
Given $q \geq p \geq 0$, and $a_1, \ldots, a_p, b_1, \ldots, b_q \in \R$, if none of $b_1, \ldots, b_q$ are non-positive integers, then for $x \in \C$, the \emph{generalized Hypergeometric function} ${}_pF_q$ is defined as
\[
    {}_pF_q
    \left[
    \hspace{-1mm}
    \begin{array}{cc}
        a_1, \ldots, a_p  \\
        b_1, \ldots, b_q 
    \end{array} \hspace{-2mm}
    \mathrel{\Big|} x
    \right] := \sum_{n=0}^\infty \frac{(a_1)_n\cdots(a_p)_n}{(b_1)_n \cdots(b_q)_n} \frac{x^n}{n!}.
\]

Here are two technical results involving these functions:
\begin{proposition}{(\href{https://dlmf.nist.gov/13.6}{(13.6.5)} in \cite{NIST:DLMF})}
    \label{prop:IncompleteGammaPFQ}
    Let $a \in \N$, then
    \begin{equation}
        \label{eqn:IncompleteGammaPFQ}
        {}_1F_1
        \left[
        \hspace{-1mm}
        \begin{array}{cc}
            1  \\
            a + 1 
        \end{array} \hspace{-2mm}
        \mathrel{\Big|} z
        \right]
        = a e^z z^{-a} \p{\Gamma(a) - \Gamma(a, z)}
    \end{equation}
\end{proposition}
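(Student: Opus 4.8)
The plan is to verify the identity by expanding both sides as power series in $z$ and comparing them term by term. First I would rewrite the left-hand side: since $(1)_n = n!$ and, for $a \in \N$ with $a \ge 1$, $(a+1)_n = (a+1)(a+2)\cdots(a+n) = (a+n)!/a!$, the series definition of the generalized hypergeometric function collapses to
\[
    {}_1F_1\bigl[1;\,a+1;\,z\bigr] \;=\; \sum_{n=0}^\infty \frac{(1)_n}{(a+1)_n}\,\frac{z^n}{n!} \;=\; a!\sum_{n=0}^\infty \frac{z^n}{(a+n)!},
\]
and since this series has infinite radius of convergence all such rearrangements are justified.

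Next I would bring in the lower incomplete gamma function $\gamma(a,z) := \Gamma(a) - \Gamma(a,z) = \int_0^z t^{a-1}e^{-t}\,\mathrm dt$ together with its standard absolutely convergent expansion $\gamma(a,z) = \Gamma(a)\,z^a e^{-z}\sum_{k=0}^\infty z^k/\Gamma(a+k+1)$. For $a \in \N$ this becomes $\gamma(a,z) = (a-1)!\,z^a e^{-z}\sum_{k=0}^\infty z^k/(a+k)!$, hence $\sum_{k=0}^\infty z^k/(a+k)! = e^{z}\gamma(a,z)\big/\bigl((a-1)!\,z^a\bigr)$. Substituting this into the displayed identity for ${}_1F_1$ gives ${}_1F_1\bigl[1;a+1;z\bigr] = a!\cdot e^{z}\gamma(a,z)\big/\bigl((a-1)!\,z^a\bigr) = a\,e^{z}z^{-a}\bigl(\Gamma(a)-\Gamma(a,z)\bigr)$, which is precisely (\ref{eqn:IncompleteGammaPFQ}).

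I do not expect a genuine obstacle here: the statement is the classical special-function identity recorded as (13.6.5) of \cite{NIST:DLMF}, and the only point needing a word of care is the termwise manipulation of the two series, which is immediate from their absolute convergence. If one wishes to avoid invoking the incomplete-gamma series, an equally short alternative is induction on $a$: the base case $a=1$ is the elementary identity ${}_1F_1\bigl[1;2;z\bigr] = (e^{z}-1)/z$, and the inductive step combines the contiguous relation ${}_1F_1\bigl[1;a+2;z\bigr] = \tfrac{a+1}{z}\bigl({}_1F_1\bigl[1;a+1;z\bigr]-1\bigr)$ --- read directly off the series above --- with the recurrence $\Gamma(a+1,z) = a\,\Gamma(a,z) + z^{a}e^{-z}$ for the upper incomplete gamma function. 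Since the paper states that the appendix proofs are omitted, either route suffices.
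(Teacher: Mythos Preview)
Your proof is correct. The paper does not prove this proposition at all --- it is stated in the appendix with the explicit remark ``We omit all the proofs'' and is attributed to (13.6.5) of \cite{NIST:DLMF} --- so there is no paper argument to compare against; your series-matching via the lower incomplete gamma expansion (and the alternative induction you sketch) are both standard and complete routes to the identity.
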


\begin{proposition}{(Main Result in \cite{VolkmerWood})}
    \label{prop:2F2AsymptoticExpansion}
    Suppose that neither $c$ nor $d$ are non-positive integers, then $_2F_2$ has the following asymptotic expansion:
    \begin{equation}
        \label{eqn:2F2AsymptoticExpansion}
        \frac{\Gamma(a)\Gamma(b)}{\Gamma(c)\Gamma(d)} 
        {}_2F_2
        \left[
        \hspace{-1mm}
        \begin{array}{cc}
            a, b  \\
            c, d 
        \end{array} \hspace{-2mm}
        \mathrel{\Big|} z
        \right]
        \sim e^z z^\nu \sum_{k=0}^\infty c_k z^{-k},
    \end{equation}
    as $z \to \infty$, where $\nu = a + b - c - d$.
    If $\nu < 0$, then for every $0 \leq k \leq -\nu$,
    \begin{equation}
        \label{eqn:2F2UnifBd}
        \sup_{z > 0} e^{-z}z^k 
        {}_2F_2
        \left[
        \hspace{-1mm}
        \begin{array}{cc}
            a, b  \\
            c, d 
        \end{array} \hspace{-2mm}
        \mathrel{\Big|} z
        \right] < \infty
    \end{equation}
\end{proposition}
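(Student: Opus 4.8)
The plan is to establish the asymptotic expansion (\ref{eqn:2F2AsymptoticExpansion}) first and then read off the uniform bound (\ref{eqn:2F2UnifBd}) as an elementary corollary. The guiding idea is to reduce the large-$z$ behaviour of ${}_2F_2$ to the classical large-argument asymptotics of the confluent function ${}_1F_1$, whose behaviour on the positive real axis (Kummer's asymptotics) is standard. To this end, for $\mathrm{Re}(d) > \mathrm{Re}(b) > 0$ I would first record the Euler-type integral representation
\[
    {}_2F_2(a,b;c,d;z) = \frac{\Gamma(d)}{\Gamma(b)\Gamma(d-b)}\int_0^1 t^{b-1}(1-t)^{d-b-1}\,{}_1F_1(a;c;zt)\,\mathrm dt,
\]
which follows by expanding ${}_1F_1(a;c;zt)$ in its defining series and evaluating the resulting Beta integrals termwise.

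Into this representation I would substitute Kummer's expansion
\[
    {}_1F_1(a;c;w) \sim \frac{\Gamma(c)}{\Gamma(a)}e^{w}w^{a-c}\sum_{k\geq 0}\frac{(c-a)_k(1-a)_k}{k!}\,w^{-k},
\]
whose algebraic (recessive) companion term is exponentially subdominant for $w>0$. Because the weight $e^{zt}$ is maximized at $t=1$, the contribution of any region $t\le 1-\delta$ is $O(e^{(1-\delta)z})$, hence exponentially smaller than the leading $e^{z}$; on the remaining neighbourhood of $t=1$ the argument $w=zt$ is large, so the substitution is legitimate there. Setting $t=1-s$ and applying Watson's lemma gives
\[
    \int_0^1 t^{a+b-c-1}(1-t)^{d-b-1}e^{zt}\,\mathrm dt = e^z\int_0^1 s^{d-b-1}(1-s)^{a+b-c-1}e^{-zs}\,\mathrm ds \sim e^z\,\Gamma(d-b)\,z^{-(d-b)},
\]
and after the Gamma prefactors cancel one obtains $\frac{\Gamma(a)\Gamma(b)}{\Gamma(c)\Gamma(d)}{}_2F_2(a,b;c,d;z)\sim e^{z}z^{\nu}$ with $\nu=a+b-c-d$ and leading coefficient $c_0=1$. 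Carrying the full Watson's lemma expansion against the full ${}_1F_1$ series produces the higher coefficients $c_k$ by a Cauchy product, which I would describe rather than grind out. The restriction $\mathrm{Re}(d)>\mathrm{Re}(b)>0$ is then lifted using the symmetry of ${}_2F_2$ in $(a,b)$ and in $(c,d)$ together with analytic continuation in the parameters, the poles $c,d\in\Z_{\le 0}$ being excluded by hypothesis.

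For the uniform bound, assume $\nu<0$ and fix any $0\le k\le -\nu$. The expansion gives, as $z\to\infty$,
\[
    e^{-z}z^{k}\,{}_2F_2(a,b;c,d;z) = \frac{\Gamma(c)\Gamma(d)}{\Gamma(a)\Gamma(b)}\,z^{k+\nu}\bigl(c_0+o(1)\bigr),
\]
which stays bounded since $k+\nu\le 0$. As $z\searrow 0$ the factor $z^{k}$ is bounded (equal to $1$ when $k=0$), while ${}_2F_2\to 1$ and $e^{-z}\to 1$; moreover $z\mapsto e^{-z}z^{k}\,{}_2F_2(a,b;c,d;z)$ is continuous on $(0,\infty)$ because ${}_2F_2$ is entire, hence bounded on every compact subinterval. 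Combining the two regimes yields finiteness of the supremum in (\ref{eqn:2F2UnifBd}).

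The main obstacle is the rigorous passage from the pointwise asymptotics of ${}_1F_1$ to a genuine asymptotic expansion of the integral with controlled error: one must estimate, uniformly over the shrinking neighbourhood of $t=1$, both the tail of the ${}_1F_1$ series and its recessive exponentially-small component, and then justify applying Watson's lemma term by term. Establishing the expansion to all orders together with the explicit recurrence for the $c_k$ is where most of the bookkeeping lives. Since the applications in $\mathsection\ref{sec:results}$ only invoke the leading behaviour and the resulting boundedness (\ref{eqn:2F2UnifBd}), I would foreground the leading-order analysis and the deduction of the uniform bound, treating the higher-order coefficients as a routine extension.
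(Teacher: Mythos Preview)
The paper does not prove this proposition at all: the Appendix opens with ``we omit all the proofs,'' and the statement is attributed directly to \cite{VolkmerWood}. So there is no in-paper argument to compare against.

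Your outline is a sound classical route. The Euler-type integral
\[
{}_2F_2(a,b;c,d;z)=\frac{\Gamma(d)}{\Gamma(b)\Gamma(d-b)}\int_0^1 t^{b-1}(1-t)^{d-b-1}\,{}_1F_1(a;c;zt)\,\mathrm dt
\]
is correct under $\Re(d)>\Re(b)>0$, and substituting the leading Kummer term ${}_1F_1(a;c;w)\sim\frac{\Gamma(c)}{\Gamma(a)}e^{w}w^{a-c}$ followed by Watson's lemma at $t=1$ gives exactly $\frac{\Gamma(a)\Gamma(b)}{\Gamma(c)\Gamma(d)}\,{}_2F_2\sim e^{z}z^{\nu}$ with $c_0=1$, as you compute. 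The extension to general parameters via the $(a,b)$ and $(c,d)$ symmetries and analytic continuation is the standard device. Your derivation of the uniform bound \eqref{eqn:2F2UnifBd} from the asymptotic plus continuity on compacts is correct and is, in fact, the only consequence actually used in \S\ref{sec:results}.

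The one place to be careful, which you already flag, is the uniformity needed to push the ${}_1F_1$ asymptotic inside the integral: on $[1-\delta,1]$ you need the ${}_1F_1$ remainder after $N$ terms to be $O\bigl((zt)^{a-c-N}e^{zt}\bigr)$ uniformly in $t$, not just pointwise, so that Watson's lemma can be applied termwise with a controlled error. This is standard (the Kummer remainder estimates are uniform on rays in the right half-plane), but it is the step that carries the analytic weight; your assessment that the higher-order coefficients are then routine bookkeeping is accurate.
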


We are interested in the connections between Bessel functions and Hypergeometric functions in the form of certain integral quantities.
These are given in the following results from $\mathsection$13.3 of \cite{watson}.
\begin{proposition}
    \label{prop:watsonu0Int}
    Let $\Re(\nu + \mu) > 0$ then
    \begin{align}
        \raisetag{-7mm} \hspace{8mm} \label{eqn:watsonu0Int}
        \int_0^\infty \besselJ{\nu}{at}\exp(-p^2t^2)t^{\mu-1}\mathrm dt = \frac{\Gamma\p{\frac{\nu}{2}+\frac{\mu}{2}}\p{\frac{a}{2p}}^\nu}{2p^{\mu}\Gamma(\nu+1)}
        {}_1F_1
        \left[
        \hspace{-1mm}
        \begin{array}{cc}
            \textstyle \frac{(\nu+\mu)}{2}  \\
            \nu+1 
        \end{array} \hspace{-2mm}
        \mathrel{\Big|} -\frac{a^2}{4p^2}
        \right],
    \end{align}
\end{proposition}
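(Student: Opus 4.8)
The plan is to prove Proposition~\ref{prop:watsonu0Int} by expanding $\besselJ{\nu}{at}$ in its defining power series, integrating term by term, and identifying the resulting series with a confluent hypergeometric function. First I would substitute
\[
\besselJ{\nu}{at} = \sum_{m=0}^\infty \frac{(-1)^m}{m!\,\Gamma(m+\nu+1)}\left(\frac{at}{2}\right)^{2m+\nu}
\]
into the left-hand side, so that formally
\[
\int_0^\infty \besselJ{\nu}{at}e^{-p^2t^2}t^{\mu-1}\,\mathrm dt = \sum_{m=0}^\infty \frac{(-1)^m}{m!\,\Gamma(m+\nu+1)}\left(\frac{a}{2}\right)^{2m+\nu}\int_0^\infty t^{2m+\nu+\mu-1}e^{-p^2t^2}\,\mathrm dt.
\]

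The interchange of summation and integration is the only step requiring genuine care, and I would justify it by absolute convergence (Fubini--Tonelli). The hypothesis $\Re(\nu+\mu)>0$ guarantees integrability of $t^{2m+\nu+\mu-1}e^{-p^2t^2}$ near $t=0$ for every $m\geq 0$ (the constraint being binding only for the $m=0$ term), while the Gaussian factor controls the behavior at $t=\infty$; summing the moduli of all terms produces a series of exactly the ${}_1F_1$-type computed below, whose convergence is automatic since ${}_1F_1$ is entire. Hence the interchange is legitimate.

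Next I would evaluate each Gaussian moment by the substitution $u=p^2t^2$, giving
\[
\int_0^\infty t^{2m+\nu+\mu-1}e^{-p^2t^2}\,\mathrm dt = \frac{1}{2\,p^{2m+\nu+\mu}}\,\Gamma\!\left(m+\frac{\nu+\mu}{2}\right).
\]
Substituting this back and factoring out the $m$-independent quantity $\tfrac{1}{2}p^{-\mu}\left(a/2p\right)^{\nu}$, the sum becomes
\[
\frac{1}{2p^\mu}\left(\frac{a}{2p}\right)^{\!\nu}\sum_{m=0}^\infty \frac{\Gamma\!\left(m+\frac{\nu+\mu}{2}\right)}{m!\,\Gamma(m+\nu+1)}\left(-\frac{a^2}{4p^2}\right)^{\!m}.
\]
Finally, writing $\Gamma\!\left(m+\tfrac{\nu+\mu}{2}\right)=\Gamma\!\left(\tfrac{\nu+\mu}{2}\right)\left(\tfrac{\nu+\mu}{2}\right)_m$ and $\Gamma(m+\nu+1)=\Gamma(\nu+1)(\nu+1)_m$ in terms of Pochhammer symbols and comparing with the definition of ${}_1F_1$ reviewed in the appendix yields precisely the claimed right-hand side. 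No restriction on $a$ or on $p$ beyond $\Re(\nu+\mu)>0$ (and $\Re(p^2)>0$, implicit in the statement) is needed, since the $m=0$ term is the only source of a genuine constraint and the final series converges everywhere. Everything except the Fubini--Tonelli justification is routine bookkeeping with Gamma functions.
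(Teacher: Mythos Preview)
Your proof is correct and is precisely the standard derivation one finds in Watson's treatise (\S 13.3), to which the paper simply defers: the appendix states this result without proof, writing ``We omit all the proofs.'' Your series-expansion, term-by-term integration via Fubini--Tonelli, and identification of the Pochhammer ratios with ${}_1F_1$ is exactly the classical argument the citation points to, so there is nothing to compare.
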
 
in which we are interested in the special case $\nu = \mu = 1$,
\[
    \int_0^\infty J_1(a t)\exp\p{-p^2t^2}\mathrm dt = \frac{a}{4p^2} {}_1F_1
        \left[
        \hspace{-1mm}
        \begin{array}{cc}
            1  \\
            2 
        \end{array} \hspace{-2mm}
        \mathrel{\Big|} -\frac{a^2}{4p^2}
        \right] = \frac{a}{4p^2} \frac{-1 + e^{-\frac{a^2}{4p^2}}}{\displaystyle \p{-\frac{a^2}{4p^2}}} = \frac{1 - e^{-\frac{a^2}{4p^2}}}{a}.
\]
In addition,
\begin{proposition}
    \label{prop:WeberIntegral}
    If $\Re(\nu) > -1/2$, $\Re(2\nu + \mu) > 0$, $|\arg p| < \pi/4$, setting $ \bar \omega = \sqrt{a^2 + b^2 - 2ab \cos\phi}$, then
    \begin{align*}
        \int_{\R_+} &\exp(-p^2u^2)\besselJ{\nu}{au}\besselJ{\nu}{bu}u^{\mu-1}\mathrm du\\   
        &= \frac{\displaystyle \Gamma\p{\nu + \frac{1}{2}\mu}}{\displaystyle2\pi p^{\mu}\Gamma\p{2\nu+1}} \p{\frac{ab}{p^2}}^\nu \int_0^\pi \exp\p{-\frac{\bar\omega^2}{4p^2}}{}_1F_1
        \left[
        \hspace{-1mm}
        \begin{array}{cc}
            \displaystyle 1 - \frac{\mu}{2}  \\
            \nu+1 
        \end{array} \hspace{-2mm}
        \mathrel{\Bigg|} \frac{\bar\omega^2}{4p^2}
        \right]\sin^{2\nu}\phi\mathrm d\phi,
    \end{align*}
\end{proposition}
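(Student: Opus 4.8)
The plan is to collapse the product $\besselJ{\nu}{au}\besselJ{\nu}{bu}$ into a single Bessel function integrated over $\phi$, after which the $u$-integral becomes an instance of the Gaussian-weighted Hankel-type integral already recorded in Proposition~\ref{prop:watsonu0Int}. First I would establish the product formula
\[
    \besselJ{\nu}{au}\besselJ{\nu}{bu}
    = \frac{(ab)^{\nu}\,u^{\nu}}{2^{\nu}\sqrt{\pi}\,\Gamma(\nu+\tfrac12)}
      \int_0^{\pi}\frac{\besselJ{\nu}{\bar\omega u}}{\bar\omega^{\nu}}\,\sin^{2\nu}\phi\,\mathrm d\phi ,
\]
valid for $\Re\nu>-\tfrac12$ with $\bar\omega=\sqrt{a^2+b^2-2ab\cos\phi}$. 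For $\Re\nu>0$ this comes from Gegenbauer's addition theorem (see \cite{watson})
\[
    \frac{\besselJ{\nu}{\bar\omega u}}{(\bar\omega u)^{\nu}}
    = 2^{\nu}\Gamma(\nu)\sum_{k\ge 0}(\nu+k)\,\frac{\besselJ{\nu+k}{au}\,\besselJ{\nu+k}{bu}}{(ab\,u^{2})^{\nu}}\,C_{k}^{(\nu)}(\cos\phi)
\]
by integrating against $\sin^{2\nu}\phi\,\mathrm d\phi$ on $(0,\pi)$: orthogonality of the Gegenbauer polynomials $C_{k}^{(\nu)}$ against the constant $C_{0}^{(\nu)}\equiv 1$ kills every term with $k\ge 1$, and the surviving $k=0$ term together with $\int_0^{\pi}\sin^{2\nu}\phi\,\mathrm d\phi=\sqrt{\pi}\,\Gamma(\nu+\tfrac12)/\Gamma(\nu+1)$ gives the displayed identity; the full range $\Re\nu>-\tfrac12$, and the degenerate value $\nu=0$ (where the Chebyshev form of the addition theorem is used), follow by analytic continuation in $\nu$. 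Only $\nu=1$ is needed for the use made of this proposition in the body of the paper.

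Next I would multiply the product formula by $\exp(-p^{2}u^{2})\,u^{\mu-1}$ and integrate over $u\in\R_{+}$, exchanging the $u$- and $\phi$-integrations by Fubini. The inner integral $\int_0^{\infty}\exp(-p^{2}u^{2})\,\besselJ{\nu}{\bar\omega u}\,u^{\nu+\mu-1}\,\mathrm du$ is exactly~(\ref{eqn:watsonu0Int}) with order $\nu$, exponent $\nu+\mu$, and ``$a$'' replaced by $\bar\omega$ --- the hypothesis $\Re(2\nu+\mu)>0$ being precisely the condition $\Re\big(\nu+(\nu+\mu)\big)>0$ required there, while $|\arg p|<\tfrac{\pi}{4}$ gives $\Re(p^{2})>0$ so every integral converges. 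Its value
\[
    \frac{\Gamma(\nu+\tfrac{\mu}{2})}{2\,p^{\nu+\mu}\,\Gamma(\nu+1)}\Bigl(\frac{\bar\omega}{2p}\Bigr)^{\nu}\,
    {}_1F_1\!\left(\nu+\tfrac{\mu}{2};\,\nu+1;\,-\tfrac{\bar\omega^{2}}{4p^{2}}\right)
\]
carries a factor $\bar\omega^{\nu}$ that cancels the weight $\bar\omega^{-\nu}$ inherited from the product formula, so that only $\sin^{2\nu}\phi$ remains multiplying the ${}_1F_1$ inside the $\phi$-integral. Collecting the accumulated constants and invoking the Legendre duplication formula $\Gamma(\nu+\tfrac12)\Gamma(\nu+1)=2^{-2\nu}\sqrt{\pi}\,\Gamma(2\nu+1)$, the prefactor reduces to exactly $\Gamma(\nu+\tfrac{\mu}{2})\,(ab/p^{2})^{\nu}/\bigl(2\pi\,p^{\mu}\,\Gamma(2\nu+1)\bigr)$. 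Finally, Kummer's transformation ${}_1F_1(\alpha;\gamma;z)=e^{z}\,{}_1F_1(\gamma-\alpha;\gamma;-z)$ with $\alpha=\nu+\tfrac{\mu}{2}$, $\gamma=\nu+1$, $z=-\bar\omega^{2}/(4p^{2})$ turns ${}_1F_1(\nu+\tfrac{\mu}{2};\nu+1;-\bar\omega^{2}/(4p^{2}))$ into $e^{-\bar\omega^{2}/(4p^{2})}\,{}_1F_1(1-\tfrac{\mu}{2};\nu+1;\bar\omega^{2}/(4p^{2}))$, which is the integrand claimed in the statement.

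The hard part will not be any single computation but the analytic bookkeeping underneath it: justifying the term-by-term integration of the Gegenbauer series in the first step, the Fubini interchange in the second step uniformly in $\phi$, and --- most delicately --- the behaviour near the isolated $\phi$ at which $\bar\omega\to 0$ (only when $a=b$), where $\bar\omega^{-\nu}$ is singular for $\Re\nu>0$ but is exactly offset by the $O(\bar\omega^{\nu})$ vanishing of $\besselJ{\nu}{\bar\omega u}$, so the integrand extends continuously across those points; the analytic continuation in $\nu$ down to $\Re\nu>-\tfrac12$ needs an analogous, routine estimate.
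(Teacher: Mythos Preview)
The paper does not give its own proof of this proposition: it is stated in the Appendix with the blanket remark ``We omit all the proofs'' and attributed to \S 13.3 of Watson \cite{watson}. Your argument is correct and is in fact the classical derivation Watson gives there --- reduce the product $J_\nu(au)J_\nu(bu)$ to a single $J_\nu(\bar\omega u)$ via the Gegenbauer product formula, evaluate the resulting Gaussian--Hankel integral (your appeal to Proposition~\ref{prop:watsonu0Int}), collapse the constants with the duplication formula, and finish with Kummer's transformation. So there is nothing to compare: you have supplied precisely the proof the paper chose to cite rather than reproduce.
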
 
in which we are interested in the special case $\nu = 1$, $\mu = 0$, and $a = b > 0$.
Thus $\bar \omega = \sqrt{2}a\sqrt{1 - \cos{\phi}}$ and
\[
    \frac{\bar \omega^2}{4p^2} = \frac{a^2(1 - \cos\phi)}{2p^2},
\]
from which it follows
\[
    \int_0^\infty \exp(-p^2u^2)\besselJ{1}{au}^2u^{-1}\mathrm du = \frac{1}{2}\p{1 - e^{-\frac{a^2}{2p^2}}\besselI{0}{-\frac{a^2}{2p^2}} + e^{-\frac{a^2}{2p^2}}\besselI{1}{-\frac{a^2}{2p^2}}}.
\]

\newpage
\section{References} \label{sec:refs}
\printbibliography[heading=none]

\end{document}